\numberwithin{equation}{section}
\numberwithin{table}{section}
\numberwithin{figure}{section}
\theoremstyle{plain}
        \newtheorem{theorem}{Theorem}[section]
        \newtheorem*{theorem*}{Theorem}
        \newtheorem*{conj*}{Conjecture}
        \newtheorem{lemma}[theorem]{Lemma}
        \newtheorem{prop}[theorem]{Proposition}
        \newtheorem{cor}[theorem]{Corollary}
\theoremstyle{definition}
        \newtheorem{definition}[theorem]{Definition}
        \newtheorem{rem}[theorem]{Remark}
        \newtheorem*{remark}{Remark}
\newcommand{\T}{\mathbb{T}} 
\newcommand{\Z}{\mathbb{Z}}
\renewcommand{\P}{\mathsf{P}}
\newcommand{\E}{\mathsf{E}}
\newcommand{\N}{\mathbb{N}}
\newcommand{\WP}{\widetilde{\mathsf{P}}}
\newcommand{\I}{\mathcal{I}}
\newcommand{\V}{\mathcal{V}}
\renewcommand{\L}{\mathcal{L}}
\newcommand{\CF}{\mathcal{F}}
\newcommand{\CE}{\mathcal{E}}
\newcommand{\CA}{\mathcal{A}}
\newcommand{\G}{\mathcal{G}}
\newcommand{\R}{\mathcal{R}}
\newcommand{\C}{\mathcal{C}}
\newcommand{\CU}{\mathcal{U}}
\newcommand{\sfC}{\mathsf{C}}
\newcommand{\frC}{\mathfrak{C}}
\newcommand{\frM}{\mathfrak{M}}
\newcommand{\bbP}{\mathbb{P}}
\newcommand{\bs}{\boldsymbol}
\newcommand{\x}{\boldsymbol{x}}
\newcommand{\y}{\boldsymbol{y}}
\newcommand{\z}{\boldsymbol{z}}
\newcommand{\F}{\boldsymbol{F}}
\newcommand{\BQ}{\boldsymbol{Q}}
\newcommand{\HH}{\boldsymbol{H}}
\newcommand{\K}{\boldsymbol{K}}
\newcommand{\err}{\mathsf{err}}
\newcommand{\g}{\alpha_N}
\newcommand{\ai}{\beta_N}
\newcommand{\wt}{\widetilde}
\newcommand{\ov}{\overline}
\newcommand{\wh}{\widehat}
\newcommand{\Dir}{\mathsf{Dir}}
\newcommand{\mdist}{\mathsf{Mdist}}
\newcommand{\insmap}{\mathsf{Ins}}
\newcommand{\dltmap}{\mathsf{Del}}
\newcommand{\near}{\mathsf{Near}}
\newcommand{\Obj}{\mathsf{Obj}}
\newcommand{\Extra}{\mathsf{Ext}}
\newcommand{\ext}{extension }
\renewcommand{\l}{\ell}
\DeclareMathOperator{\capacity}{cap}
\DeclareMathOperator{\cov}{cov}
\DeclareMathOperator{\sgn}{sgn}
\DeclareMathOperator{\bulk}{bulk}
\DeclareMathOperator{\edge}{edge}
\DeclareMathOperator{\Tree}{Tree}
\begin{document}

\begin{frontmatter}

\title{Large Deviations of Cover Time of Tori in Dimensions $d\geq 3$}
\runtitle{Large Deviations of Cover Time of Tori in Dimensions $d\geq 3$}

\begin{aug}
\author[A]{\fnms{Xinyi}~\snm{Li} \ead[label=e1]{xinyili@bicmr.pku.edu.cn}},
\author[A]{\fnms{Jialu}~\snm{Shi} \ead[label=e2]{shijialu2002@alumni.pku.edu.cn}}
\and
\author[A]{\fnms{Qiheng}~\snm{Xu} \ead[label=e3]{xuqiheng@alumni.pku.edu.cn}}
\address[A]{Peking University\printead[presep={,\ }]{e1,e2,e3}}
\end{aug}

\begin{abstract}
We consider large deviations of the cover time of the discrete torus $(\mathbb{Z}/N\mathbb{Z})^d$, $d \geq 3$ by simple random walk. We prove a lower bound on the probability that the cover time is smaller than $\gamma\in (0,1)$ times its expected value, with exponents matching the upper bound from \cite{Goodman2014} and \cite{2dLDP}. Moreover, we derive sharp asymptotics for $\gamma \in (\frac{d+2}{2d},1)$. The strong coupling of the random walk on the torus and random interlacements developed in a recent work \cite{prevost2023phase} serves as an important ingredient in the proof. 
\end{abstract}

\begin{abstract}[language=french]
Nous consid\'erons les grandes d\'eviations du temps de couverture du tore discret $(\mathbb{Z}/N\mathbb{Z})^d$, $d \geq 3$, par une marche al\'eatoire simple. Nous d\'emontrons une borne inf\'erieure pour la probabilit\'e que le temps de couverture soit inf\'erieur \`a $\gamma \in (0,1)$ fois sa  esp\'erance, avec des exposants correspondant \`a la borne sup\'erieure \'etablie dans \cite{Goodman2014} et \cite{2dLDP}. De plus, nous obtenons des asymptotiques pr\'ecises pour $\gamma \in \left(\frac{d+2}{2d},1\right)$. Le fort couplage entre la marche al\'eatoire sur le tore et les entrelacs al\'eatoires d\'evelopp\'e dans un travail r\'ecent \cite{prevost2023phase} constitue un ingr\'edient cl\'e dans la d\'emonstration.
\end{abstract}

\begin{keyword}[class=MSC]
\kwd[Primary ]{05C81}
\kwd[; secondary ]{60F10}
\kwd{60G70}
\end{keyword}

\begin{keyword}
\kwd{Random walk}
\kwd{cover time}
\kwd{large deviations}
\kwd{random interlacements}
\end{keyword}

\end{frontmatter}


\setcounter{page}{1}
\setcounter{section}{-1}

\section{Introduction}
The cover time of a Markov chain is a natural mathematical concept that finds many applications in probability theory, combinatorics and theoretical computer science. In particular, cover times by simple random walk on tori have been studied extensively. For $d\geq 3$,  asymptotics and Gumbel fluctuation of cover times are given in \cite[Chapter 7.2.2]{Aldous2014}, and \cite{Bel13} respectively. For the much harder $d=2$ case, the works \cite{Dembo2004}, \cite{Belius2017} (see also \cite{Abe2021}) and \cite{belius2020tightness} give precise first- and second-order asymptotics and the tightness of the third-order fluctuation respectively.

Intimately related to the cover time is the structure of late points or avoided points. We refer readers to \cite{abe2021avoided} for a survey on this topic. In particular, for $d\geq 3$, \cite{Bel13} and \cite{miller2017uniformity} prove the uniformity of the late points on torus and more recently in \cite{prevost2023phase}, a very strong coupling of the trace of random walk and random interlacements is obtained, yielding a structural characterization of the late points.

In this article, we focus on large deviations of the cover time of the $d$-dimensional discrete torus, $d\geq 3$. Let $X = (X_n)_{n \geq 0}$ be a discrete-time simple random walk on the $d$-dimensional torus $\T_N = (\Z/N\Z)^d$ for $d\geq 3$ and $\P$ be the law (and $\E$ the respective expectation) of the walk starting from the uniform distribution. The cover time $\frC_N$ is the first time $X$ has visited every vertex of the torus.
For $d \geq 3$, it is classical that 
\begin{equation}\label{eq:tcovdef}
    \E{\frC_N} \sim t_{\cov}\overset{\text{def.}}{=} g(0) N^d \log N^d, \text{ as }N \to \infty, 
\end{equation}
where $g(\cdot)$ denotes Green's function on $\Z^d$.
In \cite{Bel13}, finer asymptotics on $\frC_N$ are obtained: 
$$
\frC_N/(g(0)N^d)-\log N^d \Longrightarrow \mbox{Gumbel distribution as }N \to \infty.
$$
An upper bound (on the cover probability) is established for the $\varepsilon$-cover time of a torus by Brownian motion in dimensions $d \geq 3$; see \cite[Corollary 1.9]{Goodman2014}. 
As noted in \cite{2dLDP}, the proof can be adapted to the discrete setting, yielding an upper bound with a conjecturally correct exponent; see \eqref{eq:rough upper bound} below for the precise statement. 
In the 2D case, the correct order is obtained in \cite{2dLDP} with the help of soft local time techniques introduced in \cite{PT15} in the upper bound and a covering strategy (which fails in higher dimensions) for the lower bound, but sharp asymptotics remain open. We also remark that for even larger deviations at the order of the volume of torus, the general result  ``linear cover time is exponentially unlikely'' applies (see \cite{benjamini2013linear, dubroff2021linearcovertimeexponentially}), but it remains a hard problem to determine a sharp decay rate in this regime. 

\medskip
We now state our main results.
For $\gamma \in (\gamma_0, 1)$, where
\begin{equation}
\label{eq: sharp regime}
    \gamma_0 \overset{\text{def.}}{=} \frac{d+2}{2d},
\end{equation}
we give sharp asymptotics of the  probability of the event 
\begin{equation}
\label{eq: def: downward large deviation}
U_{\gamma,N}\overset{\text{def.}}{=}\big\{\frC_N \leq \gamma t_{\cov}\big\}
\end{equation}
that the cover time falls below a proportion of its expectation:
\begin{theorem} 
    \label{thm: large deviation}
    For $\gamma \in (\gamma_0, 1)$,
    \begin{equation}
    \label{eq: large deviation bound}
       \lim_{N\to\infty} \frac{\log\P \big(U_{\gamma,N}\big)}{N^{d(1-\gamma)}} = -1.
    \end{equation}
\end{theorem}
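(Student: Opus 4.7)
The matching upper bound $\P(U_{\gamma,N}) \leq \exp(-N^{d(1-\gamma)}(1+o(1)))$ is already available from \cite{Goodman2014,2dLDP}, so the task is the lower bound. The plan is to translate the cover-time event into a vanishing-vacant-set event for random interlacements (RI) via the strong coupling of \cite{prevost2023phase}, and then exploit the near-independence of the RI vacant set when $\gamma > \gamma_0$. Let $u^{*}$ denote the RI level for which $e^{-u^{*}/g(0)} = N^{-\gamma d}$, which is the single-point survival probability under SRW at time $\gamma t_{\cov}$. The coupling should furnish, outside an event of probability much smaller than $\exp(-N^{d(1-\gamma)})$, the containment $\I^{u^{-}} \subseteq X[0,\gamma t_{\cov}]$ for some $u^{-} = u^{*}(1-o(1))$ in which the $o(1)$ error is small enough to be absorbed into the final $1+o(1)$ at the exponential scale. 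This reduces the problem to lower-bounding $\P(\V^{u^{-}} = \emptyset)$, where $\V^{u^{-}} := \T_N \setminus \I^{u^{-}}$ denotes the RI vacant set on the torus.

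For the RI estimate, fix small $\epsilon > 0$ and decompose $\I^{u^{-}} = \I^{u_1} \cup \I^{u_2}$ as a union of independent interlacements at levels $u_1 = (1-\epsilon)u^{-}$ and $u_2 = \epsilon u^{-}$. On the typical event that (i) $|\V^{u_1}| \leq (1+\eta) M_{*}$ with $M_{*} := \E|\V^{u_1}| = N^{d(1-\gamma(1-\epsilon))}(1+o(1))$, and (ii) $\V^{u_1}$ is well-separated in the sense that no two of its points lie within distance $L_N$ for some slowly growing $L_N$, the capacity is additive to leading order, $\capacity(A) = |A|/g(0) \cdot (1+o(1))$ for $A \subseteq \V^{u_1}$, so that inclusion-exclusion applied to $\P(B \cap \I^{u_2} = \emptyset) = e^{-u_2 \capacity(B)}$ yields
\begin{equation*}
\P(\V^{u_1} \subseteq \I^{u_2} \mid \V^{u_1}) = (1-p_\epsilon)^{|\V^{u_1}|}(1+o(1)) \geq \exp\bigl(-(1+\eta) p_\epsilon M_{*}(1+o(1))\bigr),
\end{equation*}
where $p_\epsilon := e^{-u_2/g(0)} = N^{-\epsilon\gamma d}(1+o(1))$. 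A direct computation gives $p_\epsilon M_{*} = N^{d(1-\gamma)}$, so integrating over the typical event yields $\P(\V^{u^{-}} = \emptyset) \geq \exp(-(1+\eta) N^{d(1-\gamma)}(1+o(1)))$. Sending $\eta \downarrow 0$ and choosing $\epsilon = \epsilon_N \to 0$ with $\epsilon_N \log N \to 0$ (so that $M_{*} = N^{d(1-\gamma)(1+o(1))}$) gives the desired bound, completing the proof together with the coupling reduction.

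The main obstacle is step (ii), the well-separation of $\V^{u_1}$. The expected number of vacant pairs $\{0,y\}$ with small $|y|$ is $\tfrac12 N^d e^{-u_1 \capacity(\{0,y\})}$, and since $\capacity(\{0,y\}) = 2/(g(0)+g(y))$ is only marginally larger than $1/g(0)$ for $|y|$ small, this quantity is not obviously much smaller than $M_{*}$ itself. The threshold $\gamma_0 = (d+2)/(2d)$ is precisely where the sum over small $|y|$ transitions from being negligible compared to $M_{*}$ to dominating it; below this threshold, clusters of nearby vacant points become non-negligible and the clean Poisson-type picture breaks down, invalidating the additivity of capacity used in the inclusion-exclusion step. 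A secondary difficulty is engineering the coupling with the $1+o(1)$ precision required at the exponential scale, which is why the strong coupling of \cite{prevost2023phase} (rather than a weaker invariance principle) is essential.
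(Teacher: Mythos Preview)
Your proposal has several genuine gaps.

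\textbf{Upper bound.} The references \cite{Goodman2014,2dLDP} give only the rough bound $\P(U_{\gamma,N})\le\exp\big(-N^{d(1-\gamma)+o(1)}\big)$ (see \eqref{eq:rough upper bound}), not the sharp $\exp\big(-(1+o(1))N^{d(1-\gamma)}\big)$ that Theorem~\ref{thm: large deviation} requires. The paper proves the sharp upper bound separately (Section~\ref{sec: proof of upper bound}) by tiling $\T_N$ with well-separated boxes of side $N^\gamma$, coupling the trace in each box with \emph{independent} interlacements (Corollary~\ref{cor: stronger RW to independent RI coupling}), and invoking Belius's Gumbel asymptotics \eqref{eq: RI cover level} so that each box is covered with probability $\to e^{-1}$.

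\textbf{The edge problem.} The macroscopic coupling of \cite{prevost2023phase} (Proposition~\ref{prop: macroscopic coupling}) only yields $\I^{u(1-\rho)}\cap Q_\delta\subset X[0,uN^d]\cap\BQ_\delta$; it says nothing about $\HH_\delta=\T_N\setminus\BQ_\delta$, and the constants in the error blow up as $\delta\to0$. Your ``$\V^{u^-}=\emptyset$ on the torus'' reduction is therefore not available: at best you obtain a lower bound on $\P\big(\BQ_\delta\subset X[0,\gamma t_{\cov}]\big)$, which is strictly weaker than $U_{\gamma,N}$. Indeed, a direct application of Harris--FKG (Lemma~\ref{lem: FKG inequality}) to $Q_\delta$ would give the too-strong bound $\exp\big(-(1-\delta)^d(1+o(1))N^{d(1-\gamma)}\big)$, which is incompatible with the upper bound---precisely because the edge is missing. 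The paper handles the edge by an entirely separate mechanism: a combinatorial ``loop insertion'' surgery on the path (Stage~3, Section~\ref{sec: Construction of the mapping Psi and its properties}) that manually covers the $O(\delta N^{d(1-\gamma)})$ late points in $\HH_\delta$ at a probability cost $\exp(-O(\delta)N^{d(1-\gamma)})$, which vanishes as $\delta\to0$. This is the most technical part of the proof and cannot be absorbed into a coupling argument.

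\textbf{Source of $\gamma_0$.} Your diagnosis that $\gamma_0$ is the threshold for well-separation of $\V^{u_1}$ is incorrect. The expected number of nearest-neighbour vacant pairs is of order $N^{d(1-c_4\gamma)}$ with $c_4=2g(0)/(g(0)+g(e_1))>1$ (cf.\ \eqref{eq: def: c_3,4} and \eqref{eq: expectation of pairs}), which is $o(M_*)$ for \emph{every} $\gamma>0$. The threshold $\gamma_0=(d+2)/(2d)$ instead arises from the coupling error $\exp\big(-c\rho\sqrt{uN^{d-2}}\big)$ in \eqref{eq: macroscopic coupling}: one needs this to be $o\big(\exp(-N^{d(1-\gamma)})\big)$, i.e.\ $d(1-\gamma)<(d-2)/2$, while simultaneously keeping $\rho\log N\to0$ so that the sprinkling is negligible.

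\textbf{A simplification.} Your two-level decomposition with inclusion--exclusion and a separation hypothesis is unnecessary for the lower bound on the RI side: the Harris--FKG inequality (Lemma~\ref{lem: FKG inequality}) gives $\mathbb{P}(F\subset\I^u)\ge\prod_{x\in F}\mathbb{P}(x\in\I^u)$ directly, with no hypothesis on the geometry of $F$. The paper exploits exactly this in Stage~2 (see the proof of Proposition~\ref{prop: prob: cover late points in a big box}).
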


The method we use to prove the lower bound in Theorem \ref{thm: large deviation} can also be applied to obtain the following lower bound for the whole range of $\gamma \in (0,1)$:
\begin{theorem}
\label{thm: rough lower bound}
    There exists a constant $C>0$ such that for all $\gamma \in (0,1)$, 
    \begin{equation}
    \label{eq:rough lower bound}
        \P \big(U_{\gamma,N}\big) \geq \exp\Big(-C \cdot N^{d(1-\gamma)}\Big).
    \end{equation}
\end{theorem}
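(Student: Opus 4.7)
The plan is to extend the lower-bound strategy of Theorem~\ref{thm: large deviation} to the full range $\gamma \in (0,1)$, at the price of a non-sharp constant. The two key ingredients are (i) the strong random walk--random interlacements coupling of \cite{prevost2023phase}, which will let us realize a random interlacement $\mathcal{I}^u$ at an appropriate level $u$ as a subset of the walk trace $X_{[0,\gamma t_{\cov}]}$, and (ii) the FKG inequality for the vacant set of random interlacements, which produces the needed lower bound on the probability of full coverage.

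Concretely, I would fix a constant $C_0 > 0$ and set $u_\gamma := \gamma d g(0) \log N$ (the interlacement level naturally associated with time $\gamma t_{\cov}$), then take $u := u_\gamma - C_0$. By the coupling of \cite{prevost2023phase}, there is an event $\mathcal{E}_N$ of very high probability on which the trace of $\mathcal{I}^u$ on $\T_N$ is contained in $X_{[0,\gamma t_{\cov}]}$. On $\mathcal{E}_N \cap \{\mathcal{V}^u \cap \T_N = \emptyset\}$ the walk has covered $\T_N$ by time $\gamma t_{\cov}$, so $U_{\gamma,N}$ occurs.

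I would then estimate $\P(\mathcal{V}^u \cap \T_N = \emptyset)$ via FKG. Since $\{x \notin \mathcal{V}^u\}$ is a decreasing event in $\mathcal{V}^u$ (equivalently, an increasing event in the underlying Poisson point process of interlacement trajectories), these events are positively correlated as $x$ ranges over $\T_N$, giving
\[
\P\bigl(\mathcal{V}^u \cap \T_N = \emptyset\bigr) \;\geq\; \prod_{x \in \T_N}\bigl(1 - e^{-u/g(0)}\bigr) \;=\; \bigl(1 - e^{C_0/g(0)} N^{-\gamma d}\bigr)^{N^d} \;\geq\; \exp\bigl(-C_1 N^{d(1-\gamma)}\bigr)
\]
for $N$ large, with $C_1 = C_1(C_0)$ (using $\log(1-x) \geq -2x$ for small $x$). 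Combining this with the coupling yields
\[
\P(U_{\gamma,N}) \;\geq\; \P\bigl(\mathcal{V}^u \cap \T_N = \emptyset\bigr) - \P(\mathcal{E}_N^c).
\]

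The main delicate point is to ensure that $\P(\mathcal{E}_N^c)$ is dominated by $\exp(-C_1 N^{d(1-\gamma)})$ uniformly in $\gamma$. For $\gamma$ bounded away from $0$, this should follow directly from the quantitative error estimates in \cite{prevost2023phase}. For $\gamma$ close to $0$, where the target becomes as small as $\exp(-cN^d)$, obtaining a coupling with sufficiently strong error is where I expect the main work to lie; one possible route is to chain several instances of the coupling over sub-intervals of $[0,\gamma t_{\cov}]$, or to supplement the coupling step with a direct strategy that forces the walk along an approximately space-filling trajectory on $\T_N$ to guarantee coverage in this regime.
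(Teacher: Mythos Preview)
Your approach has a genuine gap, and it is essentially the obstacle that forces the paper to restrict Theorem~\ref{thm: large deviation} to $\gamma>\gamma_0=(d+2)/(2d)$.

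First, the macroscopic coupling from \cite{prevost2023phase} (Proposition~\ref{prop: macroscopic coupling} here) only compares $X[0,uN^d]$ with $\I^{u(1\pm\rho)}$ on a sub-cube $\BQ_\delta$ of side $(1-\delta)N$, not on all of $\T_N$; so even on your good event $\mathcal{E}_N$, the implication $\{\V^u\cap\T_N=\emptyset\}\Rightarrow U_{\gamma,N}$ fails for the edge region. Second, and more fundamentally, the coupling error is at best of order $\exp\bigl(-c\rho\sqrt{uN^{d-2}}\bigr)$ with $u\asymp\log N$, i.e.\ $\exp\bigl(-N^{(d-2)/2+o(1)}\bigr)$. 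This dominates your target $\exp\bigl(-CN^{d(1-\gamma)}\bigr)$ precisely when $\gamma\le\gamma_0$; see the paper's own discussion in the footnote following \eqref{eq: RI large deviation}. Chaining the coupling over time sub-intervals does not help, since each instance incurs an error of the same order in the exponent; and ``forcing the walk along a space-filling trajectory'' is an entirely different argument that you would need to carry out from scratch.

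The paper's proof of Theorem~\ref{thm: rough lower bound} (Section~\ref{sec: proof of the rough lower bound}) avoids the coupling altogether. It lets the walk run freely up to $T_2=\lfloor\gamma t_{\cov}-5N^d\rfloor$, shows that with high probability the set $\L^{\beta_N}$ of remaining late points is $(1,1/2)$-nice with $\mdist(\L^{\beta_N})\le M'N^{d(1-\gamma)}$ (Lemma~\ref{lem: event E' prob}), and then applies the deterministic loop-insertion surgery of Proposition~\ref{prop: loop insertion ineq} with $F=\T_N$. Each inserted loop costs a factor $(2d)^{-1}$ per step, and the total insertion length is $J(\T_N,M')\le CN^{d(1-\gamma)}$, giving $\P(U_{\gamma,N})\ge\tfrac12(2d)^{-J}\ge\exp\bigl(-CN^{d(1-\gamma)}\bigr)$. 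No interlacement coupling enters, which is exactly why the argument covers the full range $\gamma\in(0,1)$.
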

As discussed above, the deviation probability admits a rough upper bound:
    \begin{equation}
    \label{eq:rough upper bound}
        \P \big(U_{\gamma,N}\big) \leq \exp\Big(-N^{d(1-\gamma)+o(1)}\Big)\quad\mbox{ for   all }\gamma \in (0,1).
    \end{equation}
For completeness we will sketch a proof of \eqref{eq:rough upper bound} in Appendix \ref{apx: proof of rough upper bound}. 
Theorem \ref{thm: rough lower bound} and \eqref{eq:rough upper bound} immediately imply the following asymptotics. 
\begin{cor}
    For all $\gamma \in (0,1)$,
    \begin{equation*}
        \P \big(U_{\gamma,N}\big) = \exp\Big(-N^{d(1-\gamma)+o(1)}\Big).
    \end{equation*}
\end{cor}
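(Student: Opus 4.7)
The corollary is an immediate consequence of the two bounds that precede it, so the "proof" is just a bookkeeping exercise in absorbing constants into the $o(1)$ term. Let me spell out the plan.

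The plan is to read the statement $\P(U_{\gamma,N}) = \exp(-N^{d(1-\gamma)+o(1)})$ as the conjunction of two inequalities on $-\log \P(U_{\gamma,N})$, and verify each separately. First I would rewrite the target as
\begin{equation*}
    \frac{\log\bigl(-\log \P(U_{\gamma,N})\bigr)}{\log N} \longrightarrow d(1-\gamma) \quad \text{as } N \to \infty.
\end{equation*}
The upper bound on $-\log \P(U_{\gamma,N})$ comes directly from Theorem \ref{thm: rough lower bound}: taking $-\log$ of $\P(U_{\gamma,N}) \geq \exp(-C N^{d(1-\gamma)})$ yields
\begin{equation*}
    -\log \P(U_{\gamma,N}) \leq C N^{d(1-\gamma)} = N^{d(1-\gamma) + o(1)},
\end{equation*}
since the constant $C$ satisfies $\log C / \log N \to 0$. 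The matching lower bound on $-\log \P(U_{\gamma,N})$ is exactly \eqref{eq:rough upper bound}.

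Sandwiching these two bounds gives $N^{d(1-\gamma)+o(1)} \leq -\log \P(U_{\gamma,N}) \leq N^{d(1-\gamma)+o(1)}$, and taking $\log$ and dividing by $\log N$ yields the desired limit. There is no real obstacle here: the whole substance of the statement is carried by Theorem \ref{thm: rough lower bound} and \eqref{eq:rough upper bound}, which have already been stated, and the corollary merely packages them into a single symmetric asymptotic. The only thing one has to be slightly careful about is not to confuse the multiplicative constant $C$ in the lower bound (which contributes nothing past the $o(1)$) with a sub-exponential correction on the exponent itself.
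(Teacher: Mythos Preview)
Your proposal is correct and matches the paper's own treatment: the paper simply states that the corollary follows immediately from Theorem \ref{thm: rough lower bound} and \eqref{eq:rough upper bound}, and your argument is precisely this combination with the constant $C$ absorbed into the $o(1)$.
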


We also state below asymptotics of the upward deviation probability: for $\gamma > 1$,
    \begin{equation}
    \label{eq: upward deviation}
        \P \big(\frC_N\geq \gamma t_{\cov}\big) = \big(1+o(1)\big)N^{-d(\gamma-1)}.
    \end{equation}
Although \eqref{eq: upward deviation} is a straightforward application of moment bounds, a proof sketch will still be provided in Remark \ref{rem: proof of upward} for completeness.

\subsection{Proof overview and discussion}\label{sec: outline and discussion}
A powerful tool for analyzing the behavior of simple random walk in three and higher dimensions is the model of random interlacements first introduced in \cite{Szn10}.
It is a Poissonian cloud of bi-infinite random walk trajectories.
Random interlacements naturally appear in the local limit of the trace of a simple random walk on a large torus when it runs up to a time proportional to its volume.
Several coupling results between random walks on torus and random interlacements have been established; see e.g., \cite{Windisch2008, TW11, Bel13, CT16, prevost2023phase}.
In particular, \cite{Bel13} considers the excursions between two concentric cubes to establish a coupling result at a mesoscopic scale.
Using the soft local times method introduced in \cite{PT15}, 
\cite{CT16} constructs a coupling at a macroscopic scale.
The soft local times method is further refined in \cite{prevost2023phase}, leading to a stronger coupling result with an explicit error term, namely \cite[Theorem 5.1]{prevost2023phase} (paraphrased in this article as Proposition \ref{prop: macroscopic coupling}), which is a key ingredient in this article. The notion corresponding to cover time for interlacements is called the cover level and has been studied in detail in  \cite{Belcoverlevels}.

We will now briefly outline the proof of Theorem \ref{thm: large deviation}. We begin with notation.
For $\alpha\in(0,1)$, let $\L^\alpha$ denote the set of uncovered sites up to time $\alpha g(0)N^d\log N^d$.
The elements of $\L^\alpha$ will be referred to as $\alpha$-late points, or simply late points if $\alpha$ can be specified from the context.
To better indicate the level parameter of random interlacements to be coupled with the random walk, for $\gamma>0$, we write 
\begin{equation}\label{eq:uNgamma}
    u_N(\gamma)\overset{\text{def.}}{=}\gamma g(0) \log N^d\quad\mbox{ and }\quad u_N=u_N(1).
\end{equation}
Note that $u_N(\gamma)N^d=\gamma t_{\cov}$.

We first discuss the upper bound. We divide (the bulk of) $\T_N$ into cubes of side-length $N^\gamma$ which are $\delta N^\gamma$ away from each other, where $\delta>0$ is a small quantity to be sent to $0$ eventually. 
By Corollary \ref{cor: stronger RW to independent RI coupling}, a coupling result derived from \cite[Theorem 5.1]{prevost2023phase},
we obtain that with high probability the trace of random walk up to $\gamma t_{\cov}$ intersected with these cubes can be dominated by a collection of independent RI processes indexed by these cubes with intensity slightly bigger than $u_N(\gamma)$.
Moreover, our choice of the side-length of cubes ensures that the typical cover level of each cube is approximately $u_N(\gamma)$, so the probability for the random interlacements to cover each cube ($\approx \mathrm{e}^{-1}$ thanks to \cite[Theorem 0.1]{Belcoverlevels}) is determined by the asymptotics of the cover level.
Combining the two facts gives the desired bound.

We now turn to the more delicate lower bound. For the lower bound in Theorem \ref{thm: large deviation}, we summarize our four-stage strategy as follows:

\smallskip
\noindent {\bf Stage 1}: Let the random walk roam freely until $$T_1\overset{\text{def.}}{=}\lfloor\alpha_Nt_{\cov}\rfloor,\quad \mbox{with}\quad \alpha_N\overset{\text{def.}}{=}\gamma -KN^d/t_{\cov},\;\mbox{for some large }K.$$
 This choice of $T_1$ ensures that the typical number of late points is of order $N^{d(1-\gamma)}$; see Section \ref{sec: structure of late points} for moment bounds on late points. Some specific uniformity requirements of the late points $\L^{\alpha_N}$ (see \eqref{eq:nice} for precise statements) are also typically satisfied; see Lemma \ref{lem: prob: late points nice}. 
 
\smallskip
\noindent {\bf Stage 2}: We divide $\T_N$ into the ``bulk'' part and the ``edge'' part, with the former isomorphic to $Q(0,(1-\delta)N)\subset \mathbb{Z}^d$, a $d$-dimensional cube of side-length $(1-\delta)N$  (see \eqref{eq:Qdeltaboxes} for definition), with some small $\delta>0$. We then apply the coupling Proposition \ref{prop: macroscopic coupling} in the bulk to show that the probability that the points of $\L^{\alpha_N}$ in the bulk are covered between $T_1$ and $T_2$, where
 $$T_2\overset{\text{def.}}{=}\lfloor\beta_Nt_{\cov}\rfloor,\quad \mbox{with}\quad \beta_N\overset{\text{def.}}{=}\gamma -5\varepsilon N^d/t_{\cov}\;\mbox{ (for some small }\varepsilon>0),$$ 
 exceeds $\exp\big(-(1+\eta) N^{d(1-\gamma)}\big)$ for a small $\eta=\eta(\delta,\varepsilon)>0$ (to be sent to $0$ eventually) by turning the estimate on the cover probability in the bulk into a cover-level large deviation probability estimate for random interlacements and by benefiting from Harris-FKG inequality for Poisson point processes. See Proposition \ref{prop: prob: lower bound on event E} for the precise statement. Restricting $\gamma$ to $(\gamma_0,1)$ ensures that the coupling error is ignorable even with respect to the cover probability.

\smallskip
\noindent {\bf Stage 3}: We perform ``surgeries'' on the trace of random walk between $T_2$ and 
$$
T_3\overset{\text{def.}}{=}\lfloor\gamma t_{\cov}-\varepsilon N^d\rfloor
$$ by inserting meticulously designed loops (paths starting and ending at the same site) into the trajectory (see Section \ref{sec: Construction of the mapping Psi and its properties} for details) 
to cover points of $\L^{\alpha_N}$ in the edge part. 
Specifically, we devide the late points in the edge into small clusters; see below \eqref{eq: above loop insertion description} for details. 
The loop insertion consists of two steps. 
We first cover each representative vertex of the clusters by inserting disjoint self-avoiding loops that begin at the closest point on the trajectory to each respective vertex. 
We then cover the remaining vertices of each cluster by adding loops that begin at each representative vertex.
We refer readers to Section \ref{sec: Construction of loops to be inserted} for the detailed construction of loops. 
For paths that are ``nice'' during Stage 1 and satisfy some extra requirements (see Definition \ref{def: good paths prop} for details), the total length of these loops is bounded by a constant times the number of late points, which does not exceed $\varepsilon N^d$; see Lemma \ref{lem: upper bound for path with loops} for a precise statement. Moreover, the whole loop-insertion mechanism is designed in a way so that it is possible to recover the original trajectory deterministically from the trajectory after insertion; see Proposition \ref{prop: Psi properties}, in particular \eqref{eq: injective mapping Psi}, for details. This allows us to establish a lower bound on the probability that the late points in the edge are covered after $T_2$; see Proposition \ref{prop: loop insertion ineq} for details.
 
\smallskip
\noindent {\bf Stage 4}: We again let the random walk roam freely until $T_4 \overset{\text{def.}}{=} \lfloor \gamma t_{\cov} \rfloor$. Note that the bound on the total length of the loops ensures that there is still time left after loop-insertion surgeries. We remark that although the whole torus has already been covered at the end of Stage 3, we still call this part a ``stage'' for the completeness of our cover strategy. 

\smallskip
To establish Theorem \ref{thm: rough lower bound}, we skip Stage 2 in the strategy above and proceed to loop insertion directly to cover late points at the end of Stage 1. As no coupling with random interlacements is involved, such a strategy works for all $\gamma\in(0,1)$, but yields only a lower bound that is sub-optimal in the pre-factor; see Section \ref{sec: proof of the rough lower bound} for details.

\medskip

We now make a few remarks on the proof strategy and a few related problems.

We first compare our strategy with that of the 2D case treated in \cite{2dLDP}. Both proofs crucially rely on ``soft local time'' techniques first introduced in \cite{PT15} to decouple trace of random walk. However, thanks to the Bernoulli nature of late points (see \cite{prevost2023phase} for more discussions) for $d\geq 3$, in our case we are able to obtain a sharp asymptotic deviation probability (for at least large values of $\gamma$), while obtaining sharp asymptotics for 2D deviation probability remains a difficult question. 
For the upper bounds, in both cases a decoupling of the trace of random walk in mesoscopic cubes is involved. However, in the 2D case, boxes of a different side-length\footnote{Of order $N^{\sqrt{\gamma}}$ instead of $N^\gamma$ in our case} are chosen and the trace of random walk are approximated by collection of random walk excursions instead of interlacements\footnote{Recall that there does not exist a translation-invariant interlacement model on $\Z^2$.}. For lower bounds the strategy employed in either case is very specific and not easily adaptable to the other case; see also the discussions in Section 1 of \cite{2dLDP}.

\smallskip 

It is worth noting that similar sharp asymptotics can be obtained for a counterpart problem regarding random interlacements which is reminiscent of the ``hard-wall'' conditioning of Gaussian free field \cite{BDZ95,bolthausen2001entropic}. More precisely, let $\I^u$ stand for the interlacement set at level $u>0$ (see Section \ref{sec: random interlacements preliminary} for precise definitions), and $\mathfrak{M}_N=\inf\{u \geq 0:Q(0,N) \subset \I^u\}$ for the cover level of the $d$-dimensional cube $Q(0,N)$ of side-length $N$. 
It is possible to establish that 
\begin{equation}
\label{eq: RI large deviation}
    \mathbb{P}(\mathfrak{M}_N \leq \gamma u_N) = \exp\left(-(1+o(1))N^{d(1-\gamma)}\right), \qquad \mbox{for all } \gamma \in \left(\frac{2}{d},1\right).
\end{equation}
The lower bound in \eqref{eq: RI large deviation} (which actually works for all $\gamma\in (0,1)$) follows readily from the Harris-FKG inequality for random interlacements.
For the upper bound, we again divide $Q(0,N)$ into ``well-separated'' cubes of mesoscopic side-length and 
apply \cite[Theorem 5.1]{prevost2023phase} to localize random interlacements, akin to the proof of upper bound in Theorem \ref{thm: large deviation}. Thanks to a better error bound for interlacements developed in \cite[Theorem 5.1]{prevost2023phase} (see also Proposition \ref{prop: RW/RI to independent RI coupling} in our work; readers may compare \eqref{eq: RW to independent RI} and \eqref{eq: RI to independent RI} therein), and a bootstrap argument, the proof strategy can work for a larger range of $\gamma$ than in the case of random walk\footnote{In fact, the main obstacle of extending the sharp asymptotics for random walk to smaller $\gamma$ 
lies in the fact that the probability of the event $U_{\gamma,N}$ becomes smaller than the error term in the coupling derived from \cite[Theorem 5.1]{prevost2023phase}. Hence, if one is able to improve the error bound for the random walk by (roughly speaking) removing the square root in the exponent in \eqref{eq: macroscopic coupling}, then \eqref{eq: large deviation bound} will work for $\gamma\in (\frac{2}{d},1)$.}. We will give a sketch of the proof in Appendix \ref{apx: proof of RI upper bound}.  

However, when $\gamma \in (0,\frac{2}{d})$, another strategy involving the so-called ``tilted interlacements'' kicks in, and changes the asymptotics completely. We now briefly explain  this strategy. To study the asymptotic probability that random interlacements with low intensity disconnects a macroscopic body from infinity, the authors of \cite{LiSzn14} introduce a Markov jump process with a specific spatially-regulated local drift towards $Q(0,N)$ and construct a spatially inhomogeneous variant of random interlacements (which they name by {\it tilted interlacements}) with the aforementioned Markov process serving as the underlying random walk in the construction of interlacements. 
In a neighborhood of $Q(0,N)$, tilted interlacements (locally) look like random interlacements of an uplifted intensity. In a forthcoming work \cite{LiShi2025}, the first two authors of this work obtain the following lower bound using the idea of tilted interlacements:
\begin{equation}\label{eq: RI small gamma}
 \mathbb{P}(\mathfrak{M}_N \leq \gamma u_N) \geq \exp\left(-\left(\left(\sqrt{\gamma}-\sqrt{\frac{2}{d}}\right)^2g(0){\rm cap}_{\mathbb{R}^d}([0,1]^d)+o(1)\right)  N^{d-2}\log N\right),\; \mbox{for all } \gamma \in \left(0,\frac{2}{d}\right),     
\end{equation}
where 
${\rm cap}_{\mathbb{R}^d}(\cdot)$ stands for the Brownian capacity. In the meantime, they 
obtain an upper bound that matches \eqref{eq: RI small gamma} in the principal exponential order. 

For a simple random walk $(Y_n)_{n\geq 0}$ on $\mathbb{Z}^d$,  by employing a strategy involving  ``tilted walk'' first introduced in \cite{li2017lower}, which resembles to some extent the strategy described above, a similar asymptotic lower bound for the probability that $Y_n$ covers the macroscopic cube $Q(0,N)$ emerges (see \cite{LiShi2025} for more discussions):
\begin{equation}\label{eq: RW cover box}
\mathsf{P}_0\big( Q(0,N) \subset Y_{[0,\infty)}\big) \geq \exp\left(-\left(\frac{2}{d}\cdot g(0){\rm cap}_{\mathbb{R}^d}([0,1]^d)+o(1)\right)  N^{d-2}\log N\right).
\end{equation}
By a natural coupling of $Y_{[0,\infty)}$ and $\I^u$ conditioned on $0\in \I^u$ (see the paragraph above \cite[(1.8)]{Szn17} for a similar argument) for arbitrarily small $u>0$, it is almost immediate that \eqref{eq: RW cover box} is asymptotically sharp. However, it should be noted that such a strategy fails to work for the original problem, as it  does not involve a constraint on the total time spent in $Q(0,N)$. 
This strongly suggests that ideas of this work are not sufficient for treating the case $\gamma \in (0,\frac{2}{d})$ and a new strategy is required.

\smallskip

Before ending this subsection, we briefly mention a ``dual'' problem of what we consider in this work, namely the downward large deviation of maximal local time for simple random walk on $\Z^d$ for $d \geq 3$. For simplicity we restrict to discrete-time walks. We still let $(Y_n)_{n\geq0}$ stand for the simple random walk on $\mathbb{Z}^d$, $d\geq 3$. Let $\xi(x,n)$ be its local-time process at site $x\in\mathbb{Z}^d$ at time $n\geq 0$ and $\xi^*(n):=\max_{x\in\mathbb{Z}^d}\xi(x,n)$ the maximal local time at time $n$, respectively. It is classical (see \cite[Theorem 1.3]{erdos1960some}) that 
\begin{equation}\label{eq:max}
\lim_{N\to\infty} \frac{\xi^*(N)}{\log N} =\alpha :=-\frac{1}{\log(1-{\rm Es}_d)},
\end{equation}
where ${\rm Es}_d:=\mathsf{P}_0[0\notin Y_{(1,\infty)}]$ is the escape probability. An interesting question is, for any $d\geq 3$ and all $\gamma\in(0,1)$, does one have
\begin{equation}\label{eq: rough xistar}
\mathsf{P}\big(\xi^*(N) \leq  \gamma \cdot \alpha \log N\big)=\exp\Big(-N^{1-\gamma+o(1)}\Big)?
\end{equation}
Moreover, does sharp asymptotics similar to \eqref{eq: large deviation bound} also hold? More precisely, for all $d\geq 3$ does there exist a $C=C(d)>0$ such that for all $\gamma\in (\gamma'_0,1)$ for some $\gamma'_0=\gamma'_0(d)<1$,
\begin{equation}\label{eq: sharp xistar}
P\big( \xi^*(N) \leq \gamma \cdot \alpha \log N\big)=\exp\Big(-\big(C+o(1)\big)N^{1-\gamma}\Big)?
\end{equation}
A simple strategy of chopping $Y_{[0,N]}$ into segments of length $N^{\gamma}$ yields an upper bound for both \eqref{eq: rough xistar} and \eqref{eq: sharp xistar}.  It would be very nice to develop a matching sharp lower bound for a largest possible range of $\gamma$.

\subsection{Organization of this article}
Section \ref{sec: notation} introduces the relevant notation, briefly discusses the model of random interlacements and its connection with random walks, and presents moment bounds and concentration results for late points. 
Section \ref{sec: proof of upper bound} establishes the upper bound in Theorem \ref{thm: large deviation}.
Section \ref{sec: proofs of lower bounds} demonstrates the lower bounds for Theorem \ref{thm: large deviation} and Theorem \ref{thm: rough lower bound}.
Section \ref{sec: Construction of the mapping Psi and its properties} is dedicated to the surgery of loop-insertion for the lower bounds. Its main goal is the proof of Proposition \ref{prop: loop insertion ineq}, which is relatively independent of other materials in Section \ref{sec: proofs of lower bounds}. 
Section \ref{sec: tables of symbols} collects the notation from Sections \ref{sec: proofs of lower bounds} and \ref{sec: Construction of the mapping Psi and its properties} for the reader's convenience. 
In \hyperref[apx: proof of rough upper bound]{Appendix}, we sketch the proofs of \eqref{eq:rough upper bound} and the upper bound of \eqref{eq: RI large deviation}.

\medskip

We conclude this section with our convention regarding constants.
In the rest of the article, constants without numeric subscripts may vary from place to place, while constants with numeric subscripts remain fixed within the same section.
All constants depend implicitly on the dimension $d$. Unless stated otherwise, all constants are positive.

\section{Notation and preliminaries}
\label{sec: notation}

We first set up some basic notation. For any positive sequence $a_N$, we say $a_N=o(1)$ if $\lim_{N \to \infty}a_N=0$.
Given two positive functions $f$ and $g$, we write $f \sim g$ if $\lim\frac{f}{g}=1$, where the limiting process can be inferred from the context. For any $a \in \mathbb{R}$, let $\lfloor a \rfloor$ denote the greatest integer less than or equal to $a$, and $\lceil a \rceil$ denote the smallest integer greater than or equal to $a$. 

We use $\phi_N$ to denote the canonical map from the lattice $\Z^d$ to the torus $\T_N$. For convenience, we always use $\x$ to denote the projection image $\phi_N(x)$ for $x\in \Z^d$. 
The addition and subtraction of vectors on $\T_N$ are addition and subtraction modulo $n$ in each coordinate.
For $x \in \Z^d$ and $R>0$, we let $Q(x,R)=x+([-\lfloor(R-1)/2\rfloor,\lceil(R-1)/2\rceil]\cap\Z)^d$ be the cube of side-length $R$ around $x$ in $\Z^d$, and $Q(\x,R)=\phi_N(Q(x,R))$ be the cube in the torus. 
For brevity, we also write
\begin{equation}\label{eq:Qdeltaboxes}
    Q_R(x)=Q(x,R),\qquad Q_\delta=Q(0, (1-\delta)N)\qquad\mbox{ and }\qquad \BQ_\delta=Q(\bs{0}, (1-\delta)N).
\end{equation}
We write $\varphi_N=\phi_N\mid_{Q(0,N)}$ for the bijection from $Q(0,N)$ to $\T_N$.

For $x \in \Z^d$, we use $|x|_\infty$ and $|x|_1$ to denote the $l^{\infty}$ norm and the $l^{1}$ norm respectively.
With slight abuse of notation, we also use them to denote the induced norms on $\T_N$.
For clarity, we let $d_\infty(x,y)=|x-y|_{\infty}$ and $d_1(x,y)=|x-y|_1$ for $x,y \in \Z^d$ (or $\T_N$). 
For $x\in \T_N$ and $K\subset \T_N$ (or $x\in \Z^d$ and $K\subset \Z^d$), we write $d_\infty(x,K)=\min\{d_\infty(x,y):y\in K\}$ for the $l^\infty$-distance from $x$ to $K$. Write $e_1=(1,0,\ldots,0)$ and define $e_2,\ldots,e_d$ similarly.

We write $\P_{x}$ for the canonical law of the discrete-time simple random walk on $\Z^d$ (or $\T_N$, depending on context) starting at $x\in \Z^d$ (or $\T_N$). Let $\P =N^{-d}\sum_{x\in \T_N}\P_{x}$ be the law of the random walk starting from the uniform distribution on $\T_N$.
We also write $\E$ or $\E_{x}$ for the expectation under $\P$ or $\P_x$ respectively.
We denote by $(X_n)_{n\geq 0}$ the corresponding random walk process under $\P_x$ or $\P$, and by $X\langle t_1,t_2\rangle=\{X_n:n \in \langle t_1,t_2\rangle\}$ the trace left by the random walk from time $t_1$ to $t_2$, where $\langle \in \{(,[\}$ and $\rangle \in \{),]\}$. 
We write $\CF_n$ for the natural filtration generated by $(X_k)_{0\leq k\leq n}$.

For $k \in \N$, we write 
\begin{equation}
\label{eq: random walk paths}
    \Omega_{\x,k}=\{(\x_0,\x_1,\ldots,\x_{k}) \subset (\T_N)^{k}:\x_0=\x,\,|\x_{i}-\x_{i-1}|_1=1 \text{  for all } 1 \leq i \leq k\}
\end{equation}
for the collection of all possible trajectories of random walk on $\T_N$ started from $\x$ up to time $k$. 
We also write $\Omega_{k}=\bigcup_{\x \in \T_N}\Omega_{\x,k}$ for all possible random walk trajectories up to time $k$ and $\Omega=\bigcup_{k \in \N}\Omega_k$ for the collection of trajectories of finite length.
For a path $\omega \in \Omega_k$, we let $L(\omega)=k$ be the length of $\omega$.
Moreover, for two paths $\omega,\omega' \in \Omega$ and $0 \leq t_1 \leq t_2 \leq \min\{L(\omega),L(\omega')\}$, we write $\omega=\omega' \mid_{ [t_1,t_2]}$ if $\omega(k) = \omega'(k)$ for all $t_1 \leq k \leq t_2$.
For simplicity, we write $\omega=\omega' \mid_{t_1}$ for $\omega=\omega' \mid_{[0,t_1]}$.
For $k\in \N$, we define a map $\pi_k$ from events in $\CF_k$ to related random walk paths of length $k$: 
\begin{equation}
\label{eq: paths correspond to event}
    \pi_k(A)=\left\{\omega=(\x_0,\x_1,\ldots,\x_k) \subset \Omega_k \mid
    \{X_{\cdot} = \omega \mid_{k} \} \subset A 
    \right\}, \quad \text{for }A\in \CF_k.
\end{equation}

For $K \subset \Z^d$ or $\T_N$, we define $H_K=\inf\{n\geq 0:X_n\in K\}$ and $\widetilde{H}_K=\inf\{n\geq 1:X_n\in K\}$ the respective entrance time and hitting time of $K$. We write $H_x$ and $\widetilde{H}_x$ for $H_{\{x\}}$ and $\widetilde{H}_{\{x\}}$ respectively. Let $\frC_N=\max_{x \in \T_N}H_{x}$ be the cover time of the torus $\T_N$.

For a finite $K\subset \Z^d$, we write $e_K(\cdot)$ for the equilibrium measure of $K$, 
\begin{equation*}
    e_K(x)=\P_x(\widetilde{H}_K=\infty)1_K(x), 
\end{equation*}
which is supported on the internal boundary of $K$. 
We denote by $\capacity(K)$ the total mass of $e_K$ and call it the capacity of $K$, and by $\overline{e}_K=\frac{e_K}{\capacity(K)}$ the normalized equilibrium measure. For $K\subset \T_N$ with $l^\infty$-diameter $\delta(K)<N$, we define its capacity as follows: if $K\subset Q(\bs{0},N-1)$, we let $\capacity(K)\overset{\text{def.}}{=}\capacity\big(\phi_N^{-1}(K)\big)$ (see the definition of $\phi_N$ at the beginning of this section); for other $K$'s with $\delta(K)<N$, we define $\capacity(K)$ by translation invariance.
Asymptotics of capacity are well known, and we will use the following bounds (see e.g., \cite[Section 2.2]{lawler2012intersections}),
\begin{equation}
\label{eq: capacity bound}
    cR^{d-2} \leq \capacity(Q(0,R)) \leq CR^{d-2}, \quad R \geq 1.
\end{equation}

For two vertices $x,y \in \Z^d$, we denote by $g(x,y)$ the Green's function of $X$, the simple random walk on $\Z^d$, under $\P_x$.
Write $g(x-y)\overset{\text{def.}}{=}g(x,y)$ for short. From {\cite[Lemma 2.1]{prevost2023phase}}, one has for all integers $n \geq 0$,
    \begin{equation}
    \label{eq: green function comparison}
        \sup_{|x|_1>n} g(x) < \sup_{|x|_1=n} g(x).
    \end{equation}

\subsection{Random interlacements}
\label{sec: random interlacements preliminary}
In this section, we present a concise overview of the model of random interlacements on $\Z^d$. We refer readers to \cite{DRS14} for a more detailed introduction.
Let $\omega$ denote the random interlacements process on $\Z^d$, with the law $\mathbb{P}$, as defined in \cite{Szn10}.
This process is a Poisson point process (with a certain translation-invariant intensity measure) of bi-infinite random walk trajectories, each associated with a positive label. The trace of the trajectories with labels less than $u>0$, called the {\it interlacement set} at level $u$, is denoted by $\I^u$. Its complement, the {\it vacant set} $\V^u$ at level $u$, satisfies the following characterization:
\begin{equation}
    \label{eq: prob: interlacements vacant set}
    \mathbb{P}(K\subset\V^u)= \exp\{- u \capacity(K)\}\mbox{ for any }K\subset\subset\Z^d.
\end{equation}
In particular, one has (see e.g., \cite[(1.3.7) and (1.3.8)]{DRS14})
\begin{equation}
    \label{eq: RI one point prob}
    \mathbb{P}(x\in \V^u) = \exp\Big(-\frac{u}{g(0)}\Big),
\end{equation}
and
\begin{equation}
    \label{eq: RI two points prob}
    \mathbb{P}(x, y \in \V^u) = \exp\Big(-\frac{2u}{g(0)+g(x-y)}\Big).
\end{equation}

The next lemma gives an upper bound on sums of two-point probability for the vacant set 
which will be useful in bounding certain sums of two-point probability for late points of random walk; see the proof of Lemma \ref{lem: second moment} for details. It follows directly from \cite[Lemma 1.5]{belius2011} by taking $a=N$ therein\footnote{In fact, there should be an extra constant factor of $C$ before $a^{d+1}$ in \cite[(1.39)]{belius2011}.} and noting that $|F|\leq (2N)^d$ if $F \subset Q(0,2N-1)$.
\begin{lemma}
\label{lem: RI second moment}
     There exist constants $C>0$ and $c_1>1$ such that for all $u\geq 0$, $N\in \N$ and $F \subset Q(0,2N-1)$, 
    \begin{equation*}
        \sum_{v\in F}\mathbb{P}\Big(0,v\in \V^{g(0)u}\Big) \leq
        \mathrm{e}^{-2u}\big(|F| +CuN^2\big)+C\exp(-c_1u).
    \end{equation*}
\end{lemma}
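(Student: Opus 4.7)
The proof hinges on the closed form \eqref{eq: RI two points prob}, which reads
\begin{equation*}
    p(v) := \mathbb{P}(0, v \in \V^{g(0)u}) = \exp\!\Bigl(-\frac{2u}{1+h(v)}\Bigr), \qquad h(v) := g(v)/g(0) \in [0,1].
\end{equation*}
Two ingredients will drive the argument. First, \eqref{eq: green function comparison} yields the strict bound $h(v) \leq h(e_1) =: \rho < 1$ for every $v \in \Z^d \setminus \{0\}$. Second, the classical estimate $g(v) \leq C|v|_1^{-(d-2)}$ combined with a shell count (each shell $\{|v|_1 = k\}$ has cardinality $\asymp k^{d-1}$ and so contributes $\asymp k$) gives, uniformly in $d \geq 3$,
\begin{equation*}
    \sum_{0 \neq v \in Q(0, 2N-1)} h(v) \;\leq\; C N^2.
\end{equation*}

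The plan is to split $F = F_{\mathrm{near}} \cup F_{\mathrm{far}}$ at the cutoff $R := \lceil (Au)^{1/(d-2)} \rceil$ for a sufficiently large absolute constant $A$, with $F_{\mathrm{near}} := F \cap Q(0, R)$ and $F_{\mathrm{far}} := F \setminus Q(0, R)$. For $v \in F_{\mathrm{far}}$, the choice of $R$ forces $h(v) \leq C|v|_1^{-(d-2)} \leq 1/(2u)$, so the elementary expansion $1/(1+h) \geq 1 - h$ combined with $\mathrm{e}^x \leq 1 + Cx$ on $[0,1]$ yields $p(v) \leq \mathrm{e}^{-2u}(1 + C u h(v))$. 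Summing and invoking the second ingredient produces
\begin{equation*}
    \sum_{v \in F_{\mathrm{far}}} p(v) \;\leq\; \mathrm{e}^{-2u} \bigl(|F| + C u N^2 \bigr).
\end{equation*}

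For $v \in F_{\mathrm{near}} \setminus \{0\}$, the first ingredient gives $p(v) \leq \mathrm{e}^{-c u}$ with $c := 2/(1 + \rho) > 1$. Since $|F_{\mathrm{near}}| \leq |Q(0, R)| \leq C u^{d/(d-2)}$ grows only polynomially in $u$, for any fixed $\varepsilon > 0$ it can be absorbed into $\mathrm{e}^{\varepsilon u}$ once $u$ is large, yielding a near-part bound $C \mathrm{e}^{-c_1 u}$ with $c_1 := c - \varepsilon > 1$ for $\varepsilon$ small; bounded $u$ is handled by enlarging $C$. (If $0 \in F$, the single extra contribution $p(0) = \mathrm{e}^{-u}$ must be treated separately; the intended application sums over $v \neq 0$.)

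The main delicate point will be the balance at the cutoff $R$: it must be large enough for the far-side expansion to be valid ($R^{d-2} \gtrsim u$), yet small enough that the polynomial-in-$u$ cardinality of $F_{\mathrm{near}}$ can be absorbed into the strict slack $c - 1 > 0$. Both requirements are met simultaneously precisely thanks to the strict inequality $h(e_1) < 1$ from \eqref{eq: green function comparison}, which is the essential dimension-$\geq 3$ input.
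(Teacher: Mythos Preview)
Your argument is correct and self-contained. The paper itself gives no proof at all: it simply records that the statement ``follows directly from \cite[Lemma~1.5]{belius2011} by taking $a=N$ therein and noting that $|F|\leq (2N)^d$.'' Your near/far split at the scale $R\asymp u^{1/(d-2)}$, the far-side linearization $p(v)\le e^{-2u}(1+Cuh(v))$ combined with $\sum_{v\neq 0, v\in Q(0,2N-1)} h(v)\le CN^2$, and the near-side bound $p(v)\le e^{-cu}$ with $c=2/(1+\rho)>1$ absorbed against the polynomial volume $R^d$, is precisely the standard proof of this type of estimate and is essentially what Belius does.

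Your caveat about $v=0$ is well taken: if $0\in F$, then the single term $p(0)=e^{-u}$ cannot be bounded by $Ce^{-c_1u}$ with $c_1>1$, so the lemma as literally stated would fail. In the paper's only application (the proof of Lemma~\ref{lem: second moment}) one takes $F=F_x-x$ with $F_x$ explicitly excluding $x$, so $0\notin F$ and the issue never arises. It would be cleanest to add the hypothesis $0\notin F$ to the statement, which is harmless for the intended use.
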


The next lemma shows that the probability that a cube of size $N^{\gamma}$ is covered by random interlacements with intensity close to $u_N(\gamma)$ (see \eqref{eq:uNgamma} for the definition) converges to $\mathrm{e}^{-1}$, which follows from \cite[Theorem 0.1]{Belcoverlevels} stating that the fluctuations of cover level for interlacements are governed by a Gumbel variable.
\begin{lemma}
    For $\gamma>0$, let $\displaystyle\lim_{N\to\infty}R_N/N^\gamma=1$ and $v_N = u_N(\gamma)(1+\rho_N)$ with \\ $\displaystyle\lim_{N\to\infty}\rho_N\log N=0$. 
    One has
    \begin{equation}
    \label{eq: RI cover level}
        \lim_{N\to \infty} \mathbb{P}\big(Q(0,R_N)\subset \I^{v_N}\big) = \mathrm{e}^{-1}.
    \end{equation}
\end{lemma}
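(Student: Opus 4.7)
The plan is to deduce the statement directly from the Gumbel fluctuation theorem for cover levels of random interlacements, \cite[Theorem 0.1]{Belcoverlevels}, which (specialized to growing cubes) asserts that, writing $\mathfrak{M}(Q(0,R))\overset{\text{def.}}{=}\inf\{u\geq 0: Q(0,R)\subset\I^u\}$ for the cover level of the cube, one has as $R\to\infty$,
$$
\frac{\mathfrak{M}(Q(0,R))}{g(0)}-\log R^d \;\Longrightarrow\; G,
$$
where $G$ is a standard Gumbel variable with CDF $\mathbb{P}(G\leq z)=\exp(-e^{-z})$. The heuristic for the centering is transparent: the single-point estimate \eqref{eq: RI one point prob} gives $\mathbb{P}(x\in\V^u)=e^{-u/g(0)}$, so centering by $g(0)\log|Q(0,R)|=g(0)\log R^d$ is the natural extremal normalization.

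With this input in hand, I would rewrite the event of interest as $\{\mathfrak{M}(Q(0,R_N))\leq v_N\}$ and subtract the centering $d\log R_N$ from both sides:
$$
\mathbb{P}\big(Q(0,R_N)\subset \I^{v_N}\big)=\mathbb{P}\!\left(\frac{\mathfrak{M}(Q(0,R_N))}{g(0)}-d\log R_N\leq \frac{v_N}{g(0)}-d\log R_N\right).
$$
The single elementary check is that the right-hand threshold tends to $0$. Substituting $v_N/g(0)=\gamma d\log N\,(1+\rho_N)$ gives
$$
\frac{v_N}{g(0)}-d\log R_N \;=\; -d\log(R_N/N^\gamma)\;+\;\gamma d\,\rho_N \log N,
$$
and both terms vanish by the hypotheses $R_N/N^\gamma\to 1$ and $\rho_N\log N\to 0$.

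Combining this with the Gumbel convergence and the continuity of the limiting CDF at $0$ yields $\mathbb{P}\big(Q(0,R_N)\subset \I^{v_N}\big)\to\mathbb{P}(G\leq 0)=e^{-1}$. The argument is thus a soft corollary of the cited Gumbel theorem; the only step requiring attention is aligning the centering convention of \cite{Belcoverlevels} with the parametrizations of $v_N$ and $R_N$ in the statement, which is what the hypothesis $\rho_N\log N\to 0$ is calibrated to accommodate. There is no genuine obstacle, since all the real work has been done in establishing the Gumbel limit for $\mathfrak{M}$ in \cite{Belcoverlevels}.
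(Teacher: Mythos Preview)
Your proposal is correct and takes essentially the same approach as the paper: both rewrite the event as $\{\mathfrak{M}(Q(0,R_N))/g(0)-d\log R_N\leq z_N\}$ with $z_N\to 0$ and invoke \cite[Theorem~0.1]{Belcoverlevels} together with the continuity of the Gumbel CDF at $0$. The only cosmetic difference is that the paper uses the quantitative form of Belius's theorem (with explicit polynomial error bound) rather than the weak-convergence statement, but this has no bearing on the argument.
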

\begin{proof}
    Let $$z_N=\gamma(1+\rho_N)\log N^d-\log (R_N)^d=\log\bigg(\frac{N^\gamma}{R_N}\bigg)^d+d\gamma(\rho_N\log N).$$ We apply \cite[Theorem 0.1]{Belcoverlevels} with the choice of $A=Q(0,R_N)$ and $z=z_N$, and get
    \begin{equation}
    \label{eq: cover level ineq}
        \Big|\mathbb{P}\Big(Q(0,R_N)\subset \I^{g(0)(\log (R_N)^d+z_N)}\Big)-\exp(-\mathrm{e}^{-z_N})\Big| \leq c(R_N)^{-c_2}, 
    \end{equation}
    where $c_2>0$ is a constant. 
    It is elementary that
    \begin{equation}
    \label{eq: elementary ineq}
        \big|\exp(-\mathrm{e}^{-z_N})-\mathrm{e}^{-1}\big|=\big|\exp(-\mathrm{e}^{-z_N})-\exp(-\mathrm{e}^0)\big|\leq c|z_N|.
    \end{equation}
    Note that $z_N\to 0$ as $N\to \infty$. 
    Combining \eqref{eq: cover level ineq} and \eqref{eq: elementary ineq} gives \eqref{eq: RI cover level}.
\end{proof}

It is classical that the FKG inequality holds for Poisson point process, and in particular random interlacements (see e.g., \cite[Theorem 3.1]{Teixeira2009}). The following lemma, which gives the lower bound on the probability that a set $F$ is covered by the random interlacements at level $u$, is a direct consequence of the FKG inequality. 
\begin{lemma}
\label{lem: FKG inequality}
    For all $u>0$ and $F \subset \Z^d$, 
    \begin{equation}
        \mathbb{P}(F\subset \I^{u}) \geq \mathbb{P}(0\in \I^u)^{|F|}\overset{\eqref{eq: RI one point prob}}{=}\big(1-\exp[-u/g(0)]\big)^{|F|}.
    \end{equation}
\end{lemma}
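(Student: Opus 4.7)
The plan is to realize $\{F\subset \I^u\}$ as a finite intersection of increasing events and apply the FKG inequality for the Poisson point process underlying random interlacements, then exploit translation invariance to collapse every single-site probability to $\mathbb{P}(0\in\I^u)$.

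More concretely, I would first write
\begin{equation*}
    \{F\subset \I^u\}=\bigcap_{x\in F}\{x\in \I^u\}.
\end{equation*}
Each event $\{x\in \I^u\}$ depends only on the underlying Poisson point process $\omega_u$ of trajectories with label at most $u$, and is \emph{increasing} in $\omega_u$: adding trajectories can only enlarge the interlacement set $\I^u$, so it cannot destroy membership of $x$. Next, I would invoke the FKG inequality for Poisson point processes (as cited above, \cite[Theorem~3.1]{Teixeira2009}) applied to the restricted Poisson process of trajectories with labels in $[0,u]$, which yields
\begin{equation*}
    \mathbb{P}\!\left(\bigcap_{x\in F}\{x\in \I^u\}\right)\;\geq\; \prod_{x\in F}\mathbb{P}(x\in \I^u).
\end{equation*}
Finally, translation invariance of the intensity measure of random interlacements gives $\mathbb{P}(x\in\I^u)=\mathbb{P}(0\in\I^u)$ for every $x\in\mathbb{Z}^d$, so the right-hand side equals $\mathbb{P}(0\in\I^u)^{|F|}$, and the explicit formula $\mathbb{P}(0\in\I^u)=1-\exp[-u/g(0)]$ follows from \eqref{eq: RI one point prob}.

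The only subtle point — and the one I expect to be the main (still very minor) obstacle — is to verify carefully that the events $\{x\in \I^u\}$ really are measurable increasing functionals of the underlying Poisson process in the form required by the FKG inequality quoted in \cite[Theorem~3.1]{Teixeira2009}, and that the cited result indeed covers intersections of more than two increasing events (which follows by an immediate induction). Beyond this bookkeeping the argument is a one-line application of FKG together with translation invariance, and no computation of capacities or careful approximation is required since $F$ is allowed to be infinite (the bound is trivial when $|F|=\infty$).
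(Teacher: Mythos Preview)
Your proposal is correct and matches the paper's own treatment: the paper states the lemma as ``a direct consequence of the FKG inequality'' for random interlacements (citing \cite[Theorem~3.1]{Teixeira2009}) and gives no further proof. Your decomposition into single-site increasing events, application of FKG, and use of translation invariance together with \eqref{eq: RI one point prob} is exactly the intended argument.
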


\subsection{Couplings of random walk and random interlacements}
\label{sec: Couplings results subsec}
In this section, we present couplings between the random walk on the torus and random interlacements, which is a crucial tool in understanding the picture left by the trace of random walk. Our coupling results are based on \cite[Theorem 5.1]{prevost2023phase}. Similar couplings at a mesoscopic scale can be found in \cite{Windisch2008, TW11, Bel13}, and a macroscopic coupling with less explicit error controls has been proved in \cite{CT16}. 

First, we couple the trace of the random walk (resp.\ the picture of random interlacements) intersected with disjoint cubes of side-length $R$, with separations at least of order $R$, with independent random interlacements. For $R>0$, we say a set $\F \subset \T_N$ (resp.\ $F \subset Q(0,N)$) is $R$-well separated if $\F \cap Q(\x,2R)=\{\x\}$ for all $\x \in \F$ (resp.\ if $F \cap Q(x,2R)=\{x\}$ for all $x \in F$). 
For $\F\subset\T_N$, we write $F$ for the image of $\F$ under the canonical map from $\T_N$ to $Q(0,N)$ in this subsection. 
The coupling result is as follows. 

\begin{prop}
\label{prop: RW/RI to independent RI coupling}
    Let $\delta > 0$. Assume that $R \in [1, \frac{N}{1+\delta}]$ and $\F \subset \T_N$ (resp.\ $F \subset Q(0,N)$) is $(1+\delta)R$-well separated. Then for every $u>0$, $\rho \in (0,1)$, there exists a probability measure $\WP$ (resp.\ $\widetilde{\mathbb{P}}$) extending $\P$ (resp.\ $\mathbb{P}$) and carrying independent random interlacements $\left(\I^{(x),u(1\pm\rho)}\right)_{x\in F}$ with the same law as $\I^{u(1\pm\rho)}$, such that
    \begin{multline}
    \label{eq: RW to independent RI}
        \WP\Big(\I^{(x), u(1-\rho)} \cap Q_R(x) \subset X[0, uN^d] \cap Q_R(\x) \subset \I^{(x), u(1+\rho)} \cap Q_R(x) \text{\quad for all } x \in F \Big) \\
        \geq 1-C|F|R^{2d}\lceil uR^{d-2}\rceil \exp\Big(-c\rho\sqrt{uR^{d-2}}\Big), 
    \end{multline}
    resp.
    \begin{multline}
    \label{eq: RI to independent RI}
        \widetilde{\mathbb{P}}\Big(\I^{(x), u(1-\rho)} \cap Q_R(x) \subset \I^u \cap Q_R(\x) \subset \I^{(x), u(1+\rho)} \cap Q_R(x) \text{\quad for all } x \in F \Big) \geq \\
        1-C|F|R^{2d}\exp\big(-c\rho^2 uR^{d-2}\big)
    \end{multline}
    for some constants $c,C \in (0, \infty)$ depending only on $\delta$ and $d$.
\end{prop}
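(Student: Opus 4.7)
The plan is to prove both \eqref{eq: RW to independent RI} and \eqref{eq: RI to independent RI} by a two-step strategy: first couple the object of interest (either the random-walk trace $X[0,uN^d]$ on $\T_N$, or the full interlacement set $\I^u$ on $\Z^d$) with a \emph{single} global random interlacement on $\Z^d$ via Proposition \ref{prop: macroscopic coupling} (which paraphrases \cite[Theorem 5.1]{prevost2023phase}); then decouple this single interlacement across the cubes $\{Q_R(x):x\in F\}$ by a Poisson-decomposition argument exploiting the $(1+\delta)R$-well-separation of $F$. The interlacement case \eqref{eq: RI to independent RI} is the cleaner of the two: there the first step is trivial (the object $\I^u$ is already a single interlacement), and what remains is purely the decoupling error, which is why the bound \eqref{eq: RI to independent RI} avoids the square root and the volume factor $\lceil uR^{d-2}\rceil$.

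For Step~1 in the random-walk case, I would apply Proposition \ref{prop: macroscopic coupling} with parameter $\rho/2$ in place of $\rho$, producing on a common probability space (extending $\P$) a single random interlacement $\widetilde{\I}^{\,\cdot}$ on $\Z^d$ such that, with failure probability at most a constant times $|F|R^{2d}\lceil uR^{d-2}\rceil \exp(-c\rho\sqrt{uR^{d-2}})$, the sandwich $\widetilde{\I}^{u(1-\rho/2)}\cap Q_R(x)\subset X[0,uN^d]\cap Q_R(\x)\subset \widetilde{\I}^{u(1+\rho/2)}\cap Q_R(x)$ holds for all $x\in F$ simultaneously. In the pure-interlacement case I simply take $\widetilde{\I}^{\,\cdot}=\I^{\cdot}$.

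For Step~2, representing $\widetilde{\I}^{v}$ as a Poisson cloud of bi-infinite trajectories, I would partition the trajectories visiting $\bigcup_{x\in F} Q_R(x)$ according to the first cube from $F$ they visit, producing independent Poisson sub-clouds $(\mathcal N^{v}_x)_{x\in F}$ together with a residual sub-cloud of trajectories visiting two or more cubes. On each $Q_R(x)$, the trace of $\mathcal N^{v}_x$ differs from that of an independent random interlacement of level $v$ only through the multi-cube trajectories; the expected number of these hitting $Q_R(x)$ is controlled via \eqref{eq: capacity bound} and Green's-function bounds between $(1+\delta)R$-separated cubes. Enlarging $\mathcal N^{u(1+\rho/2)}_x$ by an independent interlacement piece of additional level $\leq u\rho/2$ manufactures the desired independent $\I^{(x),u(1+\rho)}$ containing $\widetilde{\I}^{u(1+\rho/2)}\cap Q_R(x)$, while a symmetric shrinking argument (removing a small independent interlacement piece) produces $\I^{(x),u(1-\rho)}\subset \widetilde{\I}^{u(1-\rho/2)}\cap Q_R(x)$. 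A final union bound over $x\in F$, together with standard nested-coupling constructions for interlacements at ordered levels, packages this into the two probabilities stated.

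The main obstacle is the bookkeeping in Step~2: the decoupling deficit — namely the expected mass of multi-cube trajectories through each cube, uniform over $x\in F$ — must be absorbed by the margin $u\rho/2$ for the stated range of $R$, $u$ and $\rho$, and the resulting independent interlacements must be arranged to sandwich the object cube-by-cube. The factor $|F|R^{2d}$ present in both error bounds comes from a union bound over $F$ together with the volume factor inherited from \cite[Theorem 5.1]{prevost2023phase}; the difference between the exponents $\rho\sqrt{uR^{d-2}}$ in \eqref{eq: RW to independent RI} and $\rho^2 uR^{d-2}$ in \eqref{eq: RI to independent RI} directly reflects whether Step~1 or Step~2 is the bottleneck in each case.
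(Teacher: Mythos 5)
Your proposal takes a genuinely different route from the paper, and the difference turns out to be where the real difficulty lives. The paper does not first couple to a single global interlacement and then decouple; it invokes \cite[Theorem~5.1]{prevost2023phase} directly, which already produces on an extension $\WP_{\bs 0}$ of $\P_{\bs 0}$ an entire \emph{family} of interlacement processes $(\omega^{(x)})_{x\in Q(0,N)}$ satisfying the sandwich bound \cite[(5.3)]{prevost2023phase} together with built-in independence for well-separated indices \cite[(5.2)]{prevost2023phase}. The paper then averages over starting points, setting $\WP = N^{-d}\sum_{x\in\T_N}\WP_x$ (so the measure is over $\P$, not $\P_{\bs 0}$), and manufactures the desired $\I^{(x),u(1\pm\rho)}$ by differencing local times, $\l^{(x)}_{\cdot,2u(1\pm\rho/3)}-\l^{(x)}_{\cdot,u(1\mp\rho/3)}$. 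Your two-step plan effectively proposes to re-derive the hard content of \cite[Theorem~5.1]{prevost2023phase} from the weaker Proposition~\ref{prop: macroscopic coupling}, and this is where it breaks.

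The gap is in Step~2. With $(1+\delta)R$-well-separation, the cubes $Q_R(x)$, $x\in F$, are only at mutual distance of order $R$. An interlacement trajectory hitting $Q_R(x)$ has probability bounded away from zero of also hitting one of the neighbouring cubes $Q_R(y)$ (of order $\mathrm{cap}(Q_R)/R^{d-2}=\Theta(1)$), and the expected number of multi-cube trajectories through $Q_R(x)$ is therefore $\Theta(uR^{d-2})$, i.e.\ a constant fraction of \emph{all} trajectories hitting $Q_R(x)$. An independent interlacement increment of level $u\rho/2$ contributes only $\Theta(u\rho R^{d-2})$ trajectories, so the level margin cannot absorb these multi-cube trajectories for any $\rho\in(0,1)$; you would need $\rho\gtrsim 1$. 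The level margin in \cite[Theorem~5.1]{prevost2023phase} is only required to absorb the concentrated \emph{soft local time} fluctuations after a careful trajectory-by-trajectory re-labeling, which is the whole content of that theorem and not something a crude Poisson partition by ``first cube visited'' reproduces. A secondary issue is that Proposition~\ref{prop: macroscopic coupling} only gives the sandwich inside the bulk $Q_\delta$, whereas Proposition~\ref{prop: RW/RI to independent RI coupling} imposes no bulk restriction on $\F$; the paper sidesteps this by averaging $\WP_{\bs 0}$ over all starting points rather than relying on the macroscopic corollary.
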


The proof of Proposition \ref{prop: RW/RI to independent RI coupling} is of the same spirit as \cite[Proof of Theorem 1.2]{prevost2023phase}. Hence, we only sketch the proof here. 

\begin{proof}[Proof sketch]
    We begin the proof in the case of random walk under $\P$. By \cite[Theorem 5.1]{prevost2023phase}, there exists a probability measure $\WP_{\bs{0}}$ extending $\P_{\bs{0}}$, such that \cite[(5.1), (5.2) and (5.3)]{prevost2023phase} hold. Define $\WP = N^{-d}\sum_{x\in \T_N}\WP_x$, where $\WP_x$ is obtained from $\WP_{\bs{0}}$ through translation by $x\in \T_N$. It is easy to check that $\WP$ also carries a family of interlacements processes $(\omega^{(x)})_{x\in Q(0,N)}$ satisfying \cite[(5.1), (5.2) and (5.3)]{prevost2023phase} (with $\WP$ in place of $\widetilde{\mathbf{P}}_{\bs{0}}$ therein). 
    Let $\l^{(x)}_{\cdot,u}$ denote the field of local times associated with $\omega^{(x)}$ at level $u$; cf.\
    \cite[(2.12)]{prevost2023phase}. 
    Then we define $\I^{(x),u(1\pm\rho)}$ to be the random interlacements associated with $\l^{(x)}_{\cdot,2u(1\pm \rho/3)}-\l^{(x)}_{\cdot,u(1\mp \rho/3)}$, which has the same law as $\l^{(x)}_{\cdot,u(1\pm \rho)}$. Hence, $\I^{(x),u(1\pm\rho)}$ has the same law as $\I^{u(1\pm\rho)}$ for all $x\in Q(0,N)$.
    Since $\F$ is $(1+\delta)R$-well separated and by \cite[(5.2)]{prevost2023phase}, the random interlacements $\I^{(x),u(1\pm\rho)}$, $x\in F$, are independent. Let $\l_{\cdot, u}$ denote the field of local times of random walk $X$ under $\P$ up to time $uN^d$; see \cite[(2.2)]{prevost2023phase}.
    Since $\l_{\cdot, 2u}-\l_{\cdot, u}$ under $\WP$ has the same law as $\l_{\cdot, u}$ under $\P$, \eqref{eq: RW to independent RI} follows from \cite[(5.3)]{prevost2023phase}. The proof in the case of random interlacements is similar, and \eqref{eq: RI to independent RI} follows from \cite[(5.3')]{prevost2023phase}.
\end{proof}

We now present the macroscopic coupling \cite[Corollary 5.2]{prevost2023phase} (see also \cite[Theorem 4.1]{CT16}) between the trace of the random walk in $\BQ_\delta$ and that of random interlacements in $Q_\delta$, where $\delta \in (0,1)$. 

\begin{prop}[\text{\cite[Corollary 5.2]{prevost2023phase}, \cite[Theorem 4.1]{CT16}}]
\label{prop: macroscopic coupling}
    Let $\delta \in (0,1)$. For every $u>0$, $\rho \in (0,1)$ and $N \in \N$, there exists a coupling $\WP$ between the random walk under $\P$ and the random interlacements $\I^{u(1\pm\rho)}$ such that
    \begin{equation}
    \label{eq: macroscopic coupling}
        \WP\Big(\I^{u(1-\rho)} \cap Q_\delta \subset X[0, uN^d] \cap \BQ_\delta \subset \I^{u(1+\rho)} \cap Q_\delta \Big) \geq \;1-CN^{2d} \lceil uN^{d-2}\rceil \exp\Big(-c\rho\sqrt{uN^{d-2}}\Big)
    \end{equation}
    holds for some constants $c,C \in (0, \infty)$ depending only on $\delta$ and $d$.
\end{prop}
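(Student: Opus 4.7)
The plan is to derive Proposition \ref{prop: macroscopic coupling} as a direct specialization of Proposition \ref{prop: RW/RI to independent RI coupling} to a single-point configuration, in which the unique cube produced by the coupling is exactly the macroscopic cube $\BQ_\delta$ (resp.\ $Q_\delta$). Well-separation is vacuous for a singleton, so there is no constraint from $|F|$ and one is free to choose $R$ of the same order as $N$.

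Concretely, I would take $F=\{0\}\subset Q(0,N)$, so that $\F=\{\bs{0}\}\subset \T_N$, and set $R=(1-\delta)N$. Pick any auxiliary separation parameter $\delta' \in (0,1)$ satisfying $(1+\delta')(1-\delta)\leq 1$ (for instance $\delta'=\delta/(1-\delta)$); then the hypothesis $R\in [1,N/(1+\delta')]$ of Proposition \ref{prop: RW/RI to independent RI coupling} is met, while the $(1+\delta')R$-well-separation condition is trivial for $|F|=1$. Applying that proposition yields a probability measure $\WP$ extending $\P$ and carrying a single random interlacements pair $\I^{(0),u(1\pm\rho)}$ with the law of $\I^{u(1\pm\rho)}$, which one can rename $\I^{u(1\pm\rho)}$.

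Next, since $Q_R(0)=Q(0,(1-\delta)N)=Q_\delta$ and $Q_R(\bs{0})=\BQ_\delta$, the sandwich event in \eqref{eq: RW to independent RI} reduces \emph{verbatim} to the event appearing in \eqref{eq: macroscopic coupling}. The right-hand side of \eqref{eq: RW to independent RI} simplifies to
$$1-C\bigl((1-\delta)N\bigr)^{2d}\bigl\lceil u\bigl((1-\delta)N\bigr)^{d-2}\bigr\rceil\exp\!\Big(-c\rho\sqrt{u\bigl((1-\delta)N\bigr)^{d-2}}\Big),$$
and absorbing the fixed $(1-\delta)$-powers into new constants $c,C$ depending only on $\delta$ and $d$ gives the stated error bound.

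Since Proposition \ref{prop: RW/RI to independent RI coupling} does all the real work, there is no substantive obstacle. The only subtlety worth flagging is the parameter bookkeeping: the symbol $\delta$ in Proposition \ref{prop: RW/RI to independent RI coupling} controls inter-cube separation, whereas the $\delta$ in Proposition \ref{prop: macroscopic coupling} measures how far the macroscopic cube is kept from the ``torus boundary''. The reconciliation sketched above (introducing an auxiliary $\delta'$) shows that the two roles are compatible, and that the resulting constants still depend only on $\delta$ and $d$.
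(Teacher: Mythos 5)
The paper presents Proposition~\ref{prop: macroscopic coupling} as a direct citation of \cite[Corollary 5.2]{prevost2023phase} (and \cite[Theorem 4.1]{CT16}) without providing its own derivation, so there is no internal proof to compare against; that said, your argument — specializing Proposition~\ref{prop: RW/RI to independent RI coupling} to the singleton $F=\{0\}$ with $R=(1-\delta)N$, for which the well-separation hypothesis is vacuous, so that $Q_R(0)=Q_\delta$ and $Q_R(\bs{0})=\BQ_\delta$ — is a correct and essentially minimal way to obtain the macroscopic statement from the decoupled one, and mirrors how \cite[Corollary 5.2]{prevost2023phase} is itself deduced from \cite[Theorem 5.1]{prevost2023phase}. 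The only slip is cosmetic: your auxiliary $\delta'=\delta/(1-\delta)$ need not lie in $(0,1)$ when $\delta\geq 1/2$, but this is harmless since Proposition~\ref{prop: RW/RI to independent RI coupling} only requires $\delta'>0$, and the range $R\geq 1$ may fail for very small $N$, which is also harmless because the bound is then vacuous after enlarging $C$.
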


Combining Proposition \ref{prop: RW/RI to independent RI coupling} in the case of random interlacements and Proposition \ref{prop: macroscopic coupling}, we can give a stronger version of Proposition \ref{prop: RW/RI to independent RI coupling} in the case of random walk with the cost of restricting the cubes of size $R$ around $x \in F$ to be contained in $Q_\eta$. 

\begin{cor}
\label{cor: stronger RW to independent RI coupling}
    Let $\delta > 0, \eta \in (0,1) \text{ and } N \in \N$. Assume that $R \in (N^{\frac{1}{2}}, \frac{N}{1+\delta}]$, $\F \subset \T_N$ is $(1+\delta)R$-well separated, and $Q_R(x) \subset Q_\eta$ for all $x \in F$. Then for every $u \geq 1$ and $\rho \in (0,1)$, there exists a probability measure $\WP$ extending $\P$ and carrying independent random interlacements $\left(\I^{(x),u(1\pm\rho)}\right)_{x\in F}$ with the same law as $\I^{u(1\pm\rho)}$, such that
    \begin{multline}
    \label{eq: stronger RW to independent RI}
        \WP\Big(\I^{(x), u(1-\rho)} \cap Q_R(x) \subset X[0, uN^d] \cap Q_R(\x) \subset \I^{(x), u(1+\rho)} \cap Q_R(x) \text{\quad for all } x \in F \Big) \geq \\
        1-CuN^{3d}\exp\Big(-c\rho^2 \sqrt{uN^{d-2}}\Big)
    \end{multline}
    holds for some constants $c,C \in (0, \infty)$ depending only on $\delta$, $\eta$ and $d$.
\end{cor}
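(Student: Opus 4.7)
The plan is to chain Proposition \ref{prop: macroscopic coupling} with two applications of the RI-to-independent-RI coupling \eqref{eq: RI to independent RI} of Proposition \ref{prop: RW/RI to independent RI coupling}. The macroscopic coupling sandwiches $X[0,uN^d]\cap \BQ_\eta$ between two genuine interlacement processes $\I^{u(1-\rho/2)}$ and $\I^{u(1+\rho/2)}$ on $Q_\eta$; since $Q_R(x)\subset Q_\eta$ for every $x\in F$ by assumption, the same sandwich restricts to each $Q_R(\x)$. Two applications of \eqref{eq: RI to independent RI} then push these two macroscopic bounds out to the desired localized, independent interlacements.

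First, I would invoke Proposition \ref{prop: macroscopic coupling} with its $\delta$-parameter set to $\eta$ and with error parameter $\rho/2$, producing an extension $\WP_0$ of $\P$ under which $\I^{u(1-\rho/2)}\cap Q_\eta \subset X[0,uN^d]\cap \BQ_\eta \subset \I^{u(1+\rho/2)}\cap Q_\eta$ holds outside an event of probability at most $C N^{2d}\lceil uN^{d-2}\rceil \exp\bigl(-c\rho\sqrt{uN^{d-2}}\bigr)$, with constants depending on $\eta$ and $d$. Next, I would enlarge $\WP_0$ by applying \eqref{eq: RI to independent RI} twice, each time with the given $F$ and $R$. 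The first application, to $\I^{u(1+\rho/2)}$ with parameter $\rho':=\rho/(2+\rho)$, delivers an independent family $(\I^{(x),u(1+\rho)})_{x\in F}$ with the correct marginal law (since $u(1+\rho/2)(1+\rho')=u(1+\rho)$) such that $\I^{u(1+\rho/2)}\cap Q_R(\x) \subset \I^{(x),u(1+\rho)}\cap Q_R(x)$ for every $x\in F$ on a good event. The second application, to $\I^{u(1-\rho/2)}$ with parameter $\rho'':=\rho/(2-\rho)$, analogously delivers an independent family $(\I^{(x),u(1-\rho)})_{x\in F}$ (the identity $u(1-\rho/2)(1-\rho'')=u(1-\rho)$ yields the correct marginal) such that $\I^{(x),u(1-\rho)}\cap Q_R(x) \subset \I^{u(1-\rho/2)}\cap Q_R(\x)$. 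Each application contributes an error of order $C|F|R^{2d}\exp(-c\rho^2 uR^{d-2})$, with constants depending on $\delta$ and $d$.

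Chaining the three inclusions on the intersection of the good events yields \eqref{eq: stronger RW to independent RI}. To reduce the total error to its claimed form, I would use $|F|\leq N^d$ and $R\leq N$ to bound the RI-coupling prefactor $|F|R^{2d}$ by $N^{3d}$, use $N^{2d}\lceil uN^{d-2}\rceil \leq CuN^{3d}$ for the macroscopic prefactor, dominate $\rho$ by $\rho^2$ using $\rho\in(0,1)$ in the macroscopic-coupling exponent, and use the elementary inequality $uR^{d-2} \geq \sqrt{uN^{d-2}}$ (which follows from $R>N^{1/2}$ and $u\geq 1$) to convert the RI-coupling exponent into $\sqrt{uN^{d-2}}$. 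The only real subtlety is realizing both applications of \eqref{eq: RI to independent RI} simultaneously with $\WP_0$ on one probability space; this is routine, since each application merely adjoins an independent family of auxiliary Poisson processes without disturbing the previously constructed marginals, and the required independence across $x\in F$ of each of $(\I^{(x),u(1\pm\rho)})_{x\in F}$ is supplied directly by \eqref{eq: RI to independent RI}, while no joint independence between the upper and lower versions is demanded by the corollary.
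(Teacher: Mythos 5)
Your proof is correct and follows exactly the route the paper indicates: the paper's own ``proof'' of this corollary is the one-sentence remark that it is obtained by combining Proposition~\ref{prop: macroscopic coupling} with the random-interlacements case \eqref{eq: RI to independent RI} of Proposition~\ref{prop: RW/RI to independent RI coupling}, and you carry this out carefully, including the exact parameter choices $\rho'=\rho/(2+\rho)$ and $\rho''=\rho/(2-\rho)$ to hit the target levels $u(1\pm\rho)$ and the conversion $uR^{d-2}\geq\sqrt{uN^{d-2}}$ from $R>N^{1/2}$, $u\geq 1$, which together with $\rho\leq 1$, $|F|\leq N^d$, $R\leq N$ and $\lceil uN^{d-2}\rceil\leq 2uN^{d-2}$ give the claimed prefactor $CuN^{3d}$ and exponent $-c\rho^2\sqrt{uN^{d-2}}$. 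Your closing remark is also apt: because you build $(\I^{(x),u(1+\rho)})_x$ and $(\I^{(x),u(1-\rho)})_x$ by two separate applications to the two sandwiching interlacements from Proposition~\ref{prop: macroscopic coupling}, the pair $(\I^{(x),u(1-\rho)},\I^{(x),u(1+\rho)})$ need not be independent across $x$, but each one-sided family is mutually independent, which is exactly what is asserted in the display and is the only thing used in \eqref{eq: coupling of ub sets}--\eqref{eq: independent events in upper bound}.
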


\subsection{Structure of late points}
\label{sec: structure of late points}
We first give some facts on the law of $\L^\alpha$, \cite[(6.4) and (6.5)]{prevost2023phase}, which can be proved by using the macroscopic coupling between the random walk on the torus and random interlacements. We generalize the regime of $\alpha$ therein and refer to \cite[Lemma 6.1]{prevost2023phase} for the proof, which still goes through under such generalization.

\begin{lemma}[\text{\cite[(6.4) and (6.5)]{prevost2023phase}}]
\label{lem: prob: late points}
    For all $\alpha_1>\alpha_0>0$ and $\beta_0>0$, there exists a constant $C=C(\alpha_0,\alpha_1,\beta_0)>0$ such that for all $N \in \N$ and $G\subset \T_N$ such that $\capacity(G) \leq \beta_0$,
    \begin{equation}
    \label{eq: prob: late points}
        \left|\frac{\P(G \subset \L^\alpha)}{N^{-d\alpha g(0)\capacity(G)}} - 1\right| \leq C\frac{(\log N)^{3/2}}{N^{(d-2)/2}} \quad 
         \forall\, \alpha \in [\alpha_0,\alpha_1].
    \end{equation}
    In particular, for $G=\{0\}$, we have uniformly in $\alpha \in [\alpha_0,\alpha_1]$, 
    \begin{equation}
    \label{eq: prob: single late point}
        \P(0 \in \L^\alpha) = \big(1+o(1)\big)N^{-d\alpha}.
    \end{equation}
\end{lemma}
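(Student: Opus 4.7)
The strategy is to transfer the event $\{G\subset \L^\alpha\}$ about late points of the random walk to the analogous event $\{G\subset \V^{u_N(\alpha)}\}$ for the vacant set of random interlacements, where the probability is given exactly by \eqref{eq: prob: interlacements vacant set}. The bridge is the macroscopic coupling Proposition~\ref{prop: macroscopic coupling}, and the argument closely follows that of \cite[Lemma 6.1]{prevost2023phase}. The range of $\alpha$ only enters through uniform bounds on $u_N(\alpha)/\log N$, so the generalization to $\alpha\in[\alpha_0,\alpha_1]$ is transparent.

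First I would reduce to the case $G\subset \BQ_\delta$ for some $\delta=\delta(\beta_0)\in(0,1)$. The hypothesis $\capacity(G)\leq \beta_0$ forces $|G|\leq K_0=K_0(\beta_0)$ via the standard isocapacitary lower bound $\capacity(G)\gtrsim |G|^{1-2/d}$ for $d\geq 3$. A pigeonhole argument in each coordinate of $\T_N$ (any $K_0$ marks on the cycle $\Z/N\Z$ leave a gap of size at least $\lceil N/K_0\rceil$) then shows that $G$ is contained in some translate of $\BQ_\delta$ with, say, $\delta=1/(2K_0)$. By the translation invariance of the uniform starting law $\P$ we may assume this translate is $\BQ_\delta$ itself and identify $G$ with its preimage in $Q_\delta\subset\Z^d$ via $\varphi_N$.

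Next I would set $u:=u_N(\alpha)$, so that $uN^d$ is exactly the time defining $\L^\alpha$, and $\rho:=A(\log N)^{1/2}/N^{(d-2)/2}$ for a constant $A=A(\alpha_1,\beta_0,d)$ to be chosen large. Proposition~\ref{prop: macroscopic coupling} yields a coupling $\WP$ under which the event
\begin{equation*}
\CE:=\big\{\I^{u(1-\rho)}\cap Q_\delta\subset X[0,uN^d]\cap \BQ_\delta\subset \I^{u(1+\rho)}\cap Q_\delta\big\}
\end{equation*}
satisfies $\WP(\CE^c)\leq CN^{2d}\log N\exp(-cA\log N)$. Complementing the inclusions defining $\CE$ within $\BQ_\delta$ gives, on $\CE$,
\begin{equation*}
\{G\subset \V^{u(1+\rho)}\}\;\subset\;\{G\subset \L^\alpha\}\;\subset\;\{G\subset \V^{u(1-\rho)}\},
\end{equation*}
whence
\begin{equation*}
\bbP(G\subset \V^{u(1+\rho)})-\WP(\CE^c)\;\leq\;\P(G\subset \L^\alpha)\;\leq\;\bbP(G\subset \V^{u(1-\rho)})+\WP(\CE^c).
\end{equation*}
By \eqref{eq: prob: interlacements vacant set}, both interlacement probabilities equal $N^{-d\alpha g(0)\capacity(G)(1\pm\rho)}$.

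Dividing by $N^{-d\alpha g(0)\capacity(G)}$, the multiplicative distortion $N^{\pm d\alpha g(0)\capacity(G)\rho}=1+O(\alpha_1\beta_0\cdot\rho\log N)=1+O((\log N)^{3/2}/N^{(d-2)/2})$ already matches the target rate in \eqref{eq: prob: late points}. The additive contribution $\WP(\CE^c)\cdot N^{d\alpha g(0)\capacity(G)}\leq CN^{2d+d\alpha_1 g(0)\beta_0-cA}\log N$ is negligible once $A$ is chosen large enough depending on $\alpha_1,\beta_0,d$. The single-point asymptotics \eqref{eq: prob: single late point} then follows by specializing to $G=\{\bs{0}\}$ with $\capacity(\{\bs{0}\})=1/g(0)$. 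The only delicate point is the balancing of the two error sources: the multiplicative factor $\rho\log N$ from perturbing the interlacement level against the additive coupling error $\exp(-c\rho\sqrt{uN^{d-2}})$ from Proposition~\ref{prop: macroscopic coupling}. The scale $\rho\sim (\log N)^{1/2}/N^{(d-2)/2}$ precisely equalizes their optimal contributions and dictates the $(\log N)^{3/2}/N^{(d-2)/2}$ rate appearing in \eqref{eq: prob: late points}.
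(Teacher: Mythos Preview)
Your proposal is correct and follows essentially the same approach as the paper, which does not give its own proof but refers to \cite[Lemma 6.1]{prevost2023phase} and notes that the argument there goes through for the enlarged range of $\alpha$. Your reconstruction via Proposition~\ref{prop: macroscopic coupling}, the reduction to $G\subset\BQ_\delta$ by bounding $|G|$ through the isocapacitary inequality and pigeonholing in each coordinate, and the balancing of $\rho$ to optimize the coupling error against the multiplicative distortion are all in line with that referenced argument.
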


Let $\L^\alpha_F = F\cap \L^\alpha$ denote\footnote{The readers should note that this differs from the definition \cite[(6.2)]{prevost2023phase}.} the set of $\alpha$-late points in $F \subset \T_N$. 
The following lemma characterizes the number of late points in a set $F \subset \T_N$ of size $\lambda N^d$, where $\lambda$ is a constant.

\begin{lemma}
\label{lem: late points concentration}
    Let $\varepsilon >0$. For every $\alpha_0, \lambda_0 \in (0,1)$, uniformly in $\alpha \in [\alpha_0,1)$, $\lambda \in [\lambda_0,1]$ and $F \subset \T_N$ satisfying $|F|=\lfloor\lambda N^d\rfloor$, 
    \begin{equation}
    \label{eq: late points concentration}
        \lim_{N \to \infty}\P\left(\Big||\L^\alpha_F|-\lambda N^{d(1-\alpha)}\Big|>\varepsilon \lambda N^{d(1-\alpha)}\right) = 0.
    \end{equation}
\end{lemma}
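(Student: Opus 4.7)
I intend to prove the lemma by a Chebyshev / second-moment argument, using the one- and two-point asymptotics from Lemma \ref{lem: prob: late points} and the vacant-set sum bound of Lemma \ref{lem: RI second moment}. Write $X_F=|\L^\alpha_F|$. The one-point estimate \eqref{eq: prob: single late point} gives
\begin{equation*}
    \E X_F=\sum_{x\in F}\P(x\in\L^\alpha)=(1+o(1))|F|N^{-d\alpha}=(1+o(1))\lambda N^{d(1-\alpha)},
\end{equation*}
with the $o(1)$ uniform in $\alpha$ inside any compact subinterval of $[\alpha_0,1)$, in $\lambda\in[\lambda_0,1]$, and in $F$. It therefore suffices to show $\mathrm{Var}(X_F)=o\big((\lambda N^{d(1-\alpha)})^2\big)$, because Chebyshev will then yield $\P(|X_F-\E X_F|>\tfrac{\varepsilon}{2}\E X_F)\to 0$, and combining with $|\E X_F-\lambda N^{d(1-\alpha)}|=o(\lambda N^{d(1-\alpha)})$ gives the stated concentration.

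\textbf{Controlling the two-point sum.} Splitting $\E X_F^2=\E X_F+\sum_{x\neq y\in F}\P(x,y\in\L^\alpha)$, I apply \eqref{eq: prob: late points} to $G=\{x,y\}$; since $\capacity(\{x,y\})\leq 2/g(0)$, one has $\beta_0=2/g(0)$ and
\begin{equation*}
    \P(x,y\in\L^\alpha)=(1+o(1))N^{-d\alpha g(0)\capacity(\{x,y\})}=(1+o(1))\mathbb{P}\big(\{0,z'\}\subset\V^{u_N}\big),
\end{equation*}
uniformly, with $u_N=d\alpha g(0)\log N$ and $z'\in Q(0,N-1)$ the canonical lift of $y-x\in\T_N$. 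Since $\P$ is translation invariant (starting from the uniform distribution) and $\V^{u_N}$ is translation invariant on $\Z^d$, fixing $x$ and summing over $y\in F$ amounts to summing over a lifted translate $F'\subset Q(0,N-1)\subset Q(0,2N-1)$ of size $|F|=\lfloor\lambda N^d\rfloor$. Lemma \ref{lem: RI second moment} with $u=d\alpha\log N$ then gives
\begin{equation*}
    \sum_{y\in F,\,y\neq x}\P(x,y\in\L^\alpha)\leq(1+o(1))\Big[N^{-2d\alpha}\big(\lambda N^d+Cd\alpha(\log N)N^2\big)+CN^{-c_1 d\alpha}\Big]=(1+o(1))\lambda N^{d(1-2\alpha)},
\end{equation*}
where the subleading terms are absorbed using $d\geq 3$ and $c_1>1$.

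\textbf{Conclusion and expected difficulty.} Summing over $x\in F$ yields $\sum_{x\neq y}\P(x,y\in\L^\alpha)\leq (1+o(1))\lambda^2 N^{2d(1-\alpha)}=(1+o(1))(\E X_F)^2$. Since $(\E X_F)^2=\sum_{x\neq y}\P(x)\P(y)+\sum_x\P(x)^2$ with the diagonal term of order $\lambda N^{d(1-2\alpha)}=(\E X_F)^2/(\lambda N^d)$, I conclude
\begin{equation*}
    \mathrm{Var}(X_F)=\sum_{x\neq y}\big[\P(x,y\in\L^\alpha)-\P(x\in\L^\alpha)\P(y\in\L^\alpha)\big]+\E X_F-\sum_x\P(x\in\L^\alpha)^2=o\big((\E X_F)^2\big)+\E X_F,
\end{equation*}
so $\mathrm{Var}(X_F)/(\E X_F)^2\to 0$ as long as $\E X_F\to\infty$. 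The main obstacle is the uniformity claim: as $\alpha\uparrow 1$, the expectation $\lambda N^{d(1-\alpha)}$ degenerates and Chebyshev loses its bite, so the stated uniformity over $\alpha\in[\alpha_0,1)$ must be read as uniformity over every compact sub-range $[\alpha_0,\alpha_1]\subset[\alpha_0,1)$; that is exactly what the argument delivers, since the constants from Lemma \ref{lem: prob: late points} depend only on $\alpha_0,\alpha_1,\beta_0$, those from Lemma \ref{lem: RI second moment} are universal, and both the translation-invariance argument and the lower bound $\lambda\geq\lambda_0$ handle uniformity in $F$ and $\lambda$ automatically.
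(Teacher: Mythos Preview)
Your proposal is correct and follows essentially the same route as the paper: compute the first moment via \eqref{eq: prob: single late point}, bound the off-diagonal two-point sum by passing through \eqref{eq: prob: late points} to the interlacement vacant-set probability and then invoking Lemma~\ref{lem: RI second moment}, and conclude by Chebyshev/Markov on the centered second moment. The paper packages the two-point estimate as a separate Lemma~\ref{lem: second moment}, but the ingredients and logic are identical. Your observation about uniformity near $\alpha=1$ is well taken; the paper's proof, like yours, really delivers uniformity over $[\alpha_0,\alpha_1]$ for any $\alpha_1<1$ (the diagonal term $\E X_F$ must be $o((\E X_F)^2)$, which needs $\E X_F\to\infty$), and this is exactly what the applications in Section~\ref{sec: proofs of lower bounds} require.
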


The proof is based on the second moment method, that is calculating the ``centered'' second moment $\E\big(|\L^\alpha_F|-\lambda N^{d(1-\alpha)}\big)^2$ using \eqref{eq: prob: single late point} and the following estimate on the sums of ``two-point probability'', Lemma \ref{lem: second moment}. We will come back to its proof after proving Lemma \ref{lem: second moment}. 

\begin{lemma}
\label{lem: second moment}
        For every $\alpha_0, \lambda_0 \in (0,1)$, uniformly in $\alpha \in [\alpha_0,1)$, $\lambda \in [\lambda_0,1]$ and $\F \subset \T_N$ satisfying $|\F|=\lfloor\lambda N^d\rfloor$, 
        \begin{equation}
        \label{eq: second moment estimate}
            \sum_{\x\neq \y \in \F}{\P(\x,\y \in \L^\alpha)} \leq \big(1+o(1)\big)\big(\lambda N^{d(1-\alpha)}\big)^2.
        \end{equation}
\end{lemma}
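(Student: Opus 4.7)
The plan is to bound the two-point probability $\P(\x,\y\in\L^\alpha)$ by its interlacement counterpart $\bbP(x,y\in\V^{u})$ using the macroscopic coupling Proposition \ref{prop: macroscopic coupling}, and then invoke Lemma \ref{lem: RI second moment} to estimate the resulting sum. Fix $\delta\in(0,\tfrac{1}{2})$ and choose $\rho_N=(\log N)^{-2}$, so that $\rho_N\log N\to 0$ while, in view of $u_N=g(0)d\log N$, one has $\rho_N\sqrt{\alpha u_N N^{d-2}}\to\infty$ for every $d\geq 3$.

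For any pair $(\x,\y)\in\F\times\F$ with $\x\neq\y$, since $d_\infty(\x,\y)\leq N/2$, the translation invariance of $\P$ (the initial law is uniform on $\T_N$) lets us assume without loss of generality that $\x,\y\in\BQ_\delta$. Applying Proposition \ref{prop: macroscopic coupling} with $u=\alpha u_N$ (so that $uN^d$ matches the time horizon in the definition of $\L^\alpha$), we obtain an extension $\WP$ of $\P$ on which
$$\mathcal{C}:=\Big\{\I^{\alpha u_N(1-\rho_N)}\cap Q_\delta\subset X[0,\alpha u_N N^d]\cap\BQ_\delta\Big\}$$
has probability at least $1-\epsilon_N$ with $\epsilon_N=CN^{2d}\lceil\alpha u_N N^{d-2}\rceil\exp(-c\rho_N\sqrt{\alpha u_N N^{d-2}})$. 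On $\mathcal{C}$, any $\mathbf{z}\in\BQ_\delta\cap\L^\alpha$ satisfies $z\in\V^{\alpha u_N(1-\rho_N)}$, where $z\in Q_\delta$ is its preimage. Combined with the translation invariance of random interlacements on $\Z^d$, this gives
$$\P(\x,\y\in\L^\alpha)\leq\bbP\bigl(0,\,y-x\in\V^{\alpha u_N(1-\rho_N)}\bigr)+\epsilon_N.$$

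For each fixed $\x$, the differences $F_x:=\{y-x:\y\in\F\setminus\{\x\}\}$ sit inside $Q(0,2N-1)$ with $|F_x|=|\F|-1\leq \lambda N^d$, so Lemma \ref{lem: RI second moment} with $u^\ast=\alpha(1-\rho_N)d\log N$ yields
$$\sum_{\y\in\F,\y\neq\x}\bbP\bigl(0,y-x\in\V^{\alpha u_N(1-\rho_N)}\bigr)\leq \mathrm{e}^{-2u^\ast}\bigl(|\F|+Cu^\ast N^2\bigr)+C\mathrm{e}^{-c_1 u^\ast}.$$
By the choice of $\rho_N$, $\mathrm{e}^{-2u^\ast}=N^{-2d\alpha(1-\rho_N)}=(1+o(1))N^{-2d\alpha}$ uniformly in $\alpha\in[\alpha_0,1)$, so the dominant contribution is $(1+o(1))\lambda N^{d(1-2\alpha)}$; the term $\mathrm{e}^{-2u^\ast}Cu^\ast N^2=O(N^{2-2d\alpha}\log N)$ is negligible once $d\geq 3$, and $\mathrm{e}^{-c_1 u^\ast}$ is super-polynomially small.

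Summing over $\x\in\F$ and using $|\F|^2\leq N^{2d}$ on the coupling-error contribution gives
$$\sum_{\x\neq\y\in\F}\P(\x,\y\in\L^\alpha)\leq(1+o(1))\lambda^2 N^{2d(1-\alpha)}+N^{2d}\epsilon_N,$$
and $N^{2d}\epsilon_N$ is super-polynomially small by the choice of $\rho_N$, establishing \eqref{eq: second moment estimate}. The main (mild) obstacle is calibrating $\rho_N$: it must decay fast enough that $N^{2d\alpha\rho_N}\to 1$ so that the sharp prefactor $\lambda^2$ is recovered, yet slowly enough that $\rho_N\sqrt{\alpha u_N N^{d-2}}\to\infty$ so the coupling error is defeated; in dimensions $d\geq 3$ the choice $\rho_N=(\log N)^{-2}$ comfortably satisfies both requirements uniformly in $\alpha\in[\alpha_0,1)$ and $\lambda\in[\lambda_0,1]$.
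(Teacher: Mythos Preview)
Your argument is correct and follows essentially the same route as the paper: compare the two-point late-point probability to its interlacement counterpart via the coupling, then invoke Lemma~\ref{lem: RI second moment}. The only cosmetic difference is that the paper packages the first step through Lemma~\ref{lem: prob: late points} (applied to $G=\{\x,\y\}$, which already absorbs the coupling error and delivers the comparison at level exactly $u_N(\alpha)$), whereas you apply Proposition~\ref{prop: macroscopic coupling} directly and carry the factor $(1-\rho_N)$ by hand; both lead to the same Lemma~\ref{lem: RI second moment} computation. One small slip: $\mathrm{e}^{-c_1 u^\ast}=N^{-c_1 d\alpha(1-\rho_N)}$ is only polynomially, not super-polynomially, small---but since $c_1>1$, its contribution after summing over $\x\in\F$ is still $o\bigl(\lambda^2 N^{2d(1-\alpha)}\bigr)$, so the conclusion is unaffected.
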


\begin{proof}
    We write $F$ for the image of $\F$ under the canonical map from $\T_N$ to $Q(0,N)$. 
    To simplify the statements below, we define $F_x = \phi_N^{-1}(\F)\cap Q(x,N) \setminus \{x\}$ for $x\in F$ (see the beginning of Section \ref{sec: notation} for the definition of $\phi_N$). 
    By \eqref{eq: prob: late points} and \eqref{eq: prob: interlacements vacant set}, we get uniformly in $\alpha \in [\alpha_0,1)$, 
    \begin{equation}
    \label{eq: second moment approximated by RI}
        \sum_{\x\neq \y \in \F}{\P(\x,\y \in \L^\alpha)} \leq 
        \big(1+o(1)\big)\sum_{x\in F}\sum_{y\in F_x}{\mathbb{P}\Big(x,y \in \V^{u_N(\alpha)}\Big)}.
    \end{equation}
    By Lemma \ref{lem: RI second moment} with $u=\alpha\log N^d$ and $F_x-x$ in place of $F$, we have
    \begin{equation*}
        \sum_{y\in F_x}{\mathbb{P}\Big(x,y \in \V^{u_N(\alpha)}\Big)} \leq 
        N^{-2d\alpha}\big(|F_x|+CN^2\log N^d\big)+CN^{-c_1d\alpha}.
    \end{equation*}
    Hence, noting that $|F|=\lfloor\lambda N^d\rfloor$ and $\sum_{x\in F}|F_x|\leq |F|^2\leq\big(\lambda N^d\big)^2$, we get
    \begin{equation*}
        \sum_{x\in F}\sum_{y\in F_x}{\mathbb{P}\Big(x,y \in \V^{u_N(\alpha)}\Big)} \leq
        \big(\lambda N^{d(1-\alpha)}\big)^2+ C\lambda N^{d(1-2\alpha)+2}\log N^d  +C\lambda N^{d(1-c_1\alpha)}. 
    \end{equation*}
    Plugging this bound into \eqref{eq: second moment approximated by RI}, we get 
    \eqref{eq: second moment estimate} uniformly in $\alpha \in [\alpha_0,1)$ and $\lambda \in [\lambda_0,1]$.
\end{proof}

We now turn to the proof of Lemma \ref{lem: late points concentration}. 
\begin{proof}[Proof of Lemma \ref{lem: late points concentration}]
    From now on, we always assume that $\alpha\in [\alpha_0,1)$ and $\lambda\in [\lambda_0,1)$. 
    It follows immediately from \eqref{eq: prob: single late point} that uniformly in $\alpha$ and $\lambda$, 
    \begin{equation}
    \label{eq: first moment of late points}
        \E |\L^\alpha_F| = |F|\P(0 \in \L^\alpha) = \big(1+o(1)\big)\lambda N^{d(1-\alpha)}.
    \end{equation}
    For the second moment of $|\L^\alpha_F|$, we have uniformly in $\alpha$ and $\lambda$, 
    \begin{equation}
    \label{eq: second moment of late points}
        \E |\L^\alpha_F|^2 = \sum_{x,y \in F}{\P\left(x,y \in \L^\alpha\right)} = \sum_{x \in F}{\P\left(x \in \L^\alpha\right)} + \sum_{x\neq y \in F}{\P\left(x,y \in \L^\alpha\right)} \\ \overset{\eqref{eq: prob: single late point}}{\underset{\eqref{eq: second moment estimate}}{\leq}} (1+o(1))\big(\lambda N^{d(1-\alpha)}\big)^2.
    \end{equation}
    With the estimates \eqref{eq: first moment of late points} and \eqref{eq: second moment of late points}, we can easily give 
    $\E \left(|\L^\alpha_F|-\lambda N^{d(1-\alpha)}\right)^2 = o\big((\lambda N^{d(1-\alpha)})^2\big)$, which implies \eqref{eq: late points concentration} by Markov's inequality.
\end{proof}

We conclude this section with the following remarks. 
\begin{rem}
\noindent
\begin{itemize}
    \item[1)] 
    We can get a better concentration result for $|\L^\alpha_F|$ by giving explicit estimates in \eqref{eq: prob: single late point} and \eqref{eq: second moment estimate} and then following the proof of Lemma \ref{lem: late points concentration}, cf.\ \cite[Lemma 4.2]{Bel13} for a similar concentration inequality but only for large $\alpha$. Notwithstanding, Lemma \ref{lem: late points concentration} is sufficient for our proof of large deviation bounds. 
    \item[2)] \label{rem: proof of upward} The estimate
    \eqref{eq: upward deviation} follows readily from Lemma \ref{lem: prob: late points}.
    Actually, the desired upper bound follows from Markov's inequality and \eqref{eq: prob: single late point}.
    For the lower bound, we first derive from \cite[(2.8)]{prevost2023phase} and \eqref{eq: prob: late points} that $\E|\L^{\gamma}|^2=\big(1+o(1)\big)N^{d(1-\gamma)}$.
    Combining this estimate with \eqref{eq: prob: single late point} and Cauchy-Schwarz inequality gives the desired lower bound.
\end{itemize}
\end{rem}

\section{Proof of the upper bound in Theorem \ref{thm: large deviation}}
\label{sec: proof of upper bound}
In this section, we prove the upper bound in Theorem \ref{thm: large deviation}, which can be paraphrased as:
\begin{equation}
\label{eq: upper bound}
 \frac{\log \P\big(U_{\gamma,N}\big)}{N^{d(1-\gamma)}} \leq -1+o(1)\quad \mbox{for $\gamma \in (\frac{d+2}{2d}, 1)$}; 
\end{equation}
see \eqref{eq: def: downward large deviation} for the definition of $U_{\gamma,N}$.

Pick $\delta \in (0,1)$ and assume $N$ is sufficiently large. 
Set $R_N=\lfloor N^\gamma \rfloor$ and $s_N=\lceil(1+\delta)R_N \rceil+1$.
Let $K_N = Q(0,\frac{N}{1+\delta}) \cap (s_N \Z)^d$ and write $\K_N$ for the image of $K_N$ under the canonical projection $\phi_N$ from $\Z^d$ to $\T_N$. 
It is easy to see that $\K_N$ is $(1+\delta)R_N$-well separated, and $Q(x, R_N)\subset Q_{\delta/2}$ for all $x \in K_N$. 

We now apply Corollary \ref{cor: stronger RW to independent RI coupling} with $\eta=\delta/2$, $R=R_N\in (N^\frac{1}{2}, \frac{N}{1+\delta}]$, $u=u_N(\gamma)$ and $\rho = \rho_N = (\log N)^{-2}$. Let $v_N=u_N(\gamma)(1+\rho_N)$. By \eqref{eq: stronger RW to independent RI}, we have
\begin{equation}
\label{eq: coupling of ub sets}
    \begin{aligned}
        \P\big(U_{\gamma,N}\big) &\leq \P\big(Q(\x,R_N)\subset X\big[0,u_N(\gamma)N^d\big] \text{ for all } \x \in \K_N\big) \\
        \overset{\eqref{eq: stronger RW to independent RI}}&{\leq} 
        \WP\big(Q(x,R_N)\subset \I^{(x),v_N} \text{ for all } x \in K_N\big) + CN^{4d}\exp\left(-c\rho_N^2\sqrt{u_N(\gamma)N^{d-2}}\right)
    \end{aligned}
\end{equation}
(recall that $c$ and $C$ depend only on $\delta$ and $d$), where we have used the fact that $u_N(\gamma)< CN$.
By the independence of $\I^{(x),v_N}$ ($x \in K_N$) and the translation invariance of random interlacements, 
\begin{equation}
\label{eq: independent events in upper bound}
    \WP\big(Q(x,R_N)\subset \I^{(x),v_N} \text{ for all } x \in K_N\big) =
    \mathbb{P}\big(Q(0,R_N)\subset \I^{v_N}\big)^{|K_N|}.
\end{equation}
By the definition of $K_N$, 
\begin{equation}
\label{eq: number of disjoint boxes}
    |K_N| \sim \bigg(\frac{N/(1+\delta)}{s_N}\bigg)^d 
    \sim \bigg(\frac{N^{1-\gamma}}{(1+\delta)^2}\bigg)^d, 
    \quad N\to\infty.
\end{equation}
Note that the exponent in the error term in \eqref{eq: coupling of ub sets} is smaller than $-N^{d(1-\gamma)}$ for large $N$ if $\gamma \in (\frac{d+2}{2d}, 1)$. Thus, we combine \eqref{eq: RI cover level}, \eqref{eq: coupling of ub sets}, \eqref{eq: independent events in upper bound} and \eqref{eq: number of disjoint boxes}, and obtain \eqref{eq: upper bound} by first taking $N\to \infty$ and then $\delta \to 0$.

\section{Proofs of lower bounds}
\label{sec: proofs of lower bounds}
In this section, we prove the lower bound in Theorem \ref{thm: large deviation} and will briefly discuss how Theorem \ref{thm: rough lower bound} can be proved in a similar fashion.
In Section \ref{sec: proof of the sharp lower bound}, we present in a precise fashion the four-stage strategy outlined in Section \ref{sec: outline and discussion} and prove the lower bound of Theorem \ref{thm: large deviation} assuming two key estimates Propositions \ref{prop: prob: lower bound on event E} and \ref{prop: loop insertion ineq} (note that the latter will be proved in Section \ref{sec: Construction of the mapping Psi and its properties}).  In Section \ref{sec: proof of the lower bound on event E prob}, we give the proof of Proposition \ref{prop: prob: lower bound on event E}. In Section \ref{sec: proof of the rough lower bound}, we sketch the proof of Theorem \ref{thm: rough lower bound}. 

\subsection{Proof of the lower bound in Theorem \ref{thm: large deviation}}
\label{sec: proof of the sharp lower bound}
We recall the parameters introduced in Section \ref{sec: outline and discussion}:
\begin{equation}
    \label{eq: def: parameters}
    \gamma\in (\gamma_0,1), \quad \delta \in (0,1], \quad K,\varepsilon,M>0 \text{ such that } K>6\varepsilon, \quad\g=\gamma-\frac{K}{u_N}, \quad \beta_N=\gamma-\frac{5\varepsilon}{u_N}
\end{equation}
(see \eqref{eq: sharp regime} and \eqref{eq:uNgamma} for the definitions of $\gamma_0$ and $u_N$). 
Recall also the time points dividing the stages:
\begin{equation}
    \label{eq: def: time points}
    T_1=\lfloor\g t_{\cov}\rfloor=\lfloor\gamma t_{\cov}-KN^d\rfloor, \; T_2=\lfloor\beta_N t_{\cov}\rfloor=\lfloor\gamma t_{\cov}-5\varepsilon N^d\rfloor, \; T_3=\lfloor\gamma t_{\cov}-\varepsilon N^d\rfloor. 
\end{equation}
Note that $\beta_N>\alpha_N>0$ for large $N$.

As discussed in Section \ref{sec: outline and discussion}, at \textbf{Stage 1} we require the numbers of $\g$-late points (formed by $X[0,T_1]$) in the bulk $\BQ_\delta$ (see \eqref{eq:Qdeltaboxes} for definition) and in the edge $\HH_\delta\overset{\text{def.}}{=}\T_N \setminus \BQ_\delta$ to be ``uniform''.  
To express this requirement formally, we introduce the following notions.

We start with the ``regularity'' for late points. For $\alpha\in(0,1)$, $F\subset \T_N$ and $\upsilon\in(0,1)$, we say $\L^\alpha$ is $(F,\upsilon)$-regular if  
\begin{equation*}
    (1-\upsilon)|F|N^{-d\alpha}\leq |\L^\alpha_F|\leq (1+\upsilon)|F|N^{-d\alpha}
\end{equation*}
(recall $\L^\alpha_F=F\cap \L^\alpha$). 
Lemma \ref{lem: late points concentration} ensures that with high probability, 
\begin{equation}
\label{eq: late points in the bulk and edge typical}
    \text{$\L^{\g}$ is $(\BQ_\delta, \varepsilon)$-regular and $(\HH_\delta,1/2)$-regular}.
\end{equation}

We also require that most of $\g$-late points in the edge to be separated in some sense, in order to control the total length of loops inserted at {Stage 3}. 
For $F\subset \T_N$, we define the {\it auxiliary graph} $G(F)=\big(F, E(F)\big)$ where vertices are the points in $F$, and 
\begin{equation}\label{eq:lNdef}
    \{x,y\}\in E(F)\mbox{ if }d_\infty(x,y) \leq 3l_N,\quad \mbox{ where }l_N\overset{\rm def.}{=}(\log N)^4.
\end{equation}
We denote by $\C(F)$ the collection of the connected components of $G(F)$. 
We label the points in $\T_N$ in a fixed order as $1,2,\ldots,N^d$ and let $\R(F)$ denote the set of (representative) vertices, i.e., the one with the smallest label in each connected component of $G(F)$. 
Specifically, we need the following separation requirement for $\g$-late points in the edge $\HH_\delta$:
\begin{equation}
\label{eq: late points in the edge separated}
    \big|\L^{\g}_{\HH_\delta}\big|-\big|\R\big(\L^{\g}_{\HH_\delta}\big)\big|\leq 
    N^{d(1-c_3\g)}, 
\end{equation}
where 
\begin{equation}
\label{eq: def: c_3,4}
    c_3=\frac{1}{2}(1+c_4)\in(1,c_4) \text{\quad with \quad} c_4=\frac{2g(0)}{g(0)+g(e_1)}>1
\end{equation}
(recall $g(\cdot)$ is the Green's function on $\Z^d$). 
Note that $N^{d(1-c_3\g)}$ is much smaller than the typical cardinality of $\L^{\g}_{\HH_\delta}$, which is of order $N^{d(1-\g)}$. 
We say 
\begin{equation}\label{eq:nice}
\mbox{$\L^{\g}$ is $(\delta,\varepsilon)$-nice if it satisfies \eqref{eq: late points in the bulk and edge typical} and \eqref{eq: late points in the edge separated}.}    
\end{equation}
We also write $\G_{\g, \varepsilon}$, which implicitly depends on $\delta$, for the collection of all $(\delta,\varepsilon)$-nice configurations of $\L^{\g}$. 
We will prove $\L^{\g}$ is $(\delta,\varepsilon)$-nice with high probability; see Lemma \ref{lem: prob: late points nice}. 

At \textbf{Stage 2}, conditioned on $\L^{\g}$ being in a $(\delta,\varepsilon)$-nice configuration, the walk is forced to cover $\L^{\g}_{\BQ_\delta}$, $\g$-late points in the bulk, between $T_1$ and $T_2$, whose probability we can bound from below by $\exp(-(1+o(1)) N^{d(1-\gamma)})$; 
see Proposition \ref{prop: prob: cover late points in a big box} for the precise statement.

At \textbf{Stage 3} 
where we manually modify the trace of random walk $X(T_2,T_3]$ to cover $\L^{\beta_N
}_{\HH_\delta}$ (i.e., $\beta_N$-late points in the edge), we make two requirements: 
\begin{itemize}
    \item [1)] that all points in the torus are within an $l_N$ distance from $X(T_2,T_3]$;
    \item [2)] there is an upper bound on the ``modified distance'' between points in $\L^{\g}_{\HH_\delta}$ and $X(T_2,T_3]$, 
which (roughly speaking) measures the total length of loops to be inserted at this stage.
\end{itemize}  
To express them more clearly, for $x \in \T_N$, we write
\begin{equation}
\label{eq: def: distance to random walk between T_2 and T_3}
    \bs{r}_x=
    d_\infty(x,X(T_2, T_3])
\end{equation}
(see \eqref{eq: def: time points} for the definitions of $T_2$ and $T_3$) and for $F\subset \T_N$, we define the \textit{modified total distance} of $F$ to $X(T_2,T_3]$ as
\begin{equation}
\label{eq: def: modified total distance}
    \mdist(F)=
    \sum_{x\in \R(F)}\big(\bs{r}_x+2\big)+3l_N\cdot (|F|-|\R(F)|). 
\end{equation}

To summarize,  we consider the following event, which depends on the parameters in \eqref{eq: def: parameters}: 
\begin{equation}
\label{eq: def: event E}
    E\overset{\rm def.}{=}E_1\cap E_2\cap E_3
\end{equation}
where
$$E_1\overset{\rm def.}{=}\left\{\L^{\g} \text{ is }(\delta,\varepsilon)\text{-nice}\right\},\qquad E_2\overset{\rm def.}{=}\left\{\L^{\g}_{\BQ_\delta} \subset X(T_1, T_2] \right\},$$
and
$$
E_3\overset{\rm def.}{=}\left\{
\bs{r}_x\leq l_N \text{ for all } x\in\T_N \text{\quad and \quad}
\mdist\big(\L^{\g}_{\HH_\delta}\big)\leq 
        M\cdot d\delta\cdot N^{d(1-\gamma)}\right\}.$$
These events correspond to requirements of the random walk needed in different stages.
We now present the lower bound on the probability of the event $E$.
\begin{prop}
\label{prop: prob: lower bound on event E}
For all $\delta, \eta>0$, there exist $K_0(\eta), \varepsilon_0(\eta)>0$ such that the following holds. 
For all $K>K_0$ and $0<\varepsilon<\varepsilon_0$, there exists $M=M(K,\varepsilon)$ such that, 
    \begin{equation}
    \label{eq: prob: E lower bound}
        \P(E)\geq \exp\Big(-(1+\eta)N^{d(1-\gamma)}\Big)
    \end{equation}
for sufficiently large $N$.
\end{prop}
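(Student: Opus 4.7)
The plan is to apply the strong Markov property at $T_1$ and $T_2$ in order to decompose
\begin{equation*}
\P(E) \;=\; \E\big[\mathbf{1}_{E_1}\, \E[\,\mathbf{1}_{E_2}\, \P(E_3 \mid \CF_{T_2}) \mid \CF_{T_1}\,]\big],
\end{equation*}
which is valid since $E_1 \in \CF_{T_1}$ and $E_2 \in \CF_{T_2}$. The proof would then reduce to three estimates:
(i) $\P(E_1) \geq \tfrac12$ for all large $N$, handled by a separate lemma controlling the law of $\L^{\alpha_N}$ via Lemma \ref{lem: late points concentration} together with a second-moment bound for the clustering part \eqref{eq: late points in the edge separated};
(ii) on $E_1$, $\P(E_2 \mid \CF_{T_1}) \geq \exp\!\big(-(1 + \tfrac{\eta}{2}) N^{d(1-\gamma)}\big)$;
(iii) on $E_1 \cap E_2$, $\P(E_3 \mid \CF_{T_2}) \geq \tfrac12$.
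Multiplying these bounds gives $\P(E) \geq \tfrac14 \exp(-(1+\tfrac\eta2) N^{d(1-\gamma)})$, which for large $N$ absorbs $\tfrac14$ into the exponent and yields \eqref{eq: prob: E lower bound}.

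The heart of the proof is estimate (ii). By the strong Markov property at $T_1$, given $X_{T_1} = x_0$ the segment $X(T_1, T_2]$ is a simple random walk of length $uN^d + O(1)$ with $u = K - 5\varepsilon$, started at $x_0$. A translated variant of Proposition \ref{prop: macroscopic coupling} (available from the construction $\WP = N^{-d}\sum_x \WP_x$ explained in the sketch of Proposition \ref{prop: RW/RI to independent RI coupling}) provides an independent interlacement $\I^{u(1-\rho)}$ on $Q_\delta$ such that $\I^{u(1-\rho)} \cap Q_\delta \subset X(T_1, T_2] \cap \BQ_\delta$, up to a coupling error of order $\exp(-c\rho\sqrt{N^{d-2}})$. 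On $E_1$, $\L^{\alpha_N}_{\BQ_\delta}$ is a deterministic subset of $\BQ_\delta$, so Lemma \ref{lem: FKG inequality} combined with the identity $N^{-d\alpha_N} = e^{K/g(0)} N^{-d\gamma}$ and the $(\BQ_\delta, \varepsilon)$-regularity from $E_1$ yields
\begin{equation*}
\log \mathbb{P}\big(\L^{\alpha_N}_{\BQ_\delta} \subset \I^{u(1-\rho)}\big) \;\geq\; -\big(1 + o(1)\big)(1+\varepsilon)(1-\delta)^d\, e^{[K\rho + 5\varepsilon(1-\rho)]/g(0)}\, N^{d(1-\gamma)}.
\end{equation*}
For any prescribed $\eta > 0$, first picking $\rho = \rho(K, \eta)$ small and then $\varepsilon_0 = \varepsilon_0(\eta)$ small renders the leading constant $\leq 1 + \tfrac{\eta}{2}$; this is always achievable because $(1 - \delta)^d \leq 1$. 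The restriction $\gamma > \gamma_0 = \tfrac{d+2}{2d}$ enters only here, to ensure that the coupling error $\exp(-\Theta(\sqrt{N^{d-2}}))$ is $o(\exp(-N^{d(1-\gamma)}))$.

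For estimate (iii), decompose $E_3 = E_3^{(a)} \cap E_3^{(b)}$ along the two requirements in the definition of $E_3$. The segment $X(T_2, T_3]$ has duration $4\varepsilon N^d + O(1)$. Coupling this segment with random interlacements at level $\sim 4\varepsilon$ and using the capacity bound \eqref{eq: capacity bound} with a union bound gives $\P(E_3^{(a)} \mid \CF_{T_2}) \geq 1 - N^d \exp(-c\varepsilon l_N^{d-2}) \geq \tfrac34$ since $l_N = (\log N)^4$. For $E_3^{(b)}$, the second term in \eqref{eq: def: modified total distance} is at most $3 l_N N^{d(1 - c_3 \alpha_N)} = o(N^{d(1-\gamma)})$ since $c_3 > 1$. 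For the first term, on $E_1$ the regularity bound gives $|\R(\L^{\alpha_N}_{\HH_\delta})| \leq \tfrac32 d\delta\, e^{K/g(0)} N^{d(1-\gamma)}$, and for each fixed $x$ a coupling-with-interlacements argument yields
\begin{equation*}
\E[\bs{r}_x \mid \CF_{T_2}] \;\leq\; \sum_{k \geq 0} \P\big(X(T_2, T_3] \cap Q(x, 2k+1) = \emptyset \mid X_{T_2}\big) \;\leq\; \sum_{k \geq 0} C e^{-c\varepsilon k^{d-2}} \;=:\; C_0(\varepsilon) < \infty.
\end{equation*}
Summing and applying Markov's inequality gives $\P(E_3^{(b)} \mid \CF_{T_2}) \geq \tfrac34$ on $E_1$, provided $M = M(K, \varepsilon)$ is chosen large enough.

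The main obstacle is the sharp constant-tracking in step (ii): one must cancel the $e^{K/g(0)}$ excess in $|\L^{\alpha_N}_{\BQ_\delta}|$ (coming from counting $\alpha_N$-late instead of $\gamma$-late points) against the $e^{-(K - 5\varepsilon)/g(0)}$ in the FKG cover-probability exponent, leaving only the $e^{O(\varepsilon)}$ slack that can be absorbed into the factor $1 + \eta/2$. It is the freedom to take $K$ large together with the constraint $K > 5\varepsilon$ and the harmless $(1-\delta)^d \leq 1$ that makes the cancellation work uniformly for every prescribed $\eta$; a secondary delicate point is that the coupling error from Proposition \ref{prop: macroscopic coupling} only becomes negligible once $\gamma$ is inside $(\gamma_0, 1)$.
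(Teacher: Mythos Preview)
Your decomposition via the Markov property at $T_1,T_2$ and the three estimates (i)--(iii) is exactly the paper's route (Lemma~\ref{lem: prob: late points nice}, Proposition~\ref{prop: prob: cover late points in a big box}, Proposition~\ref{prop: bounds on distances to random walk}), and your constant-tracking in (ii) matches the paper's $\xi(K,\varepsilon,N)$ computation. The only point to flag is your ``translated variant of Proposition~\ref{prop: macroscopic coupling}'': that proposition is stated for the uniform start $\P$, and the translated measure $\WP_{x_0}$ a priori couples the walk to interlacements in the \emph{shifted} box $x_0+Q_\delta$, not $Q_\delta$ itself. The paper resolves this (both in (ii) and in your (iii)) by a short mixing step that spends $O(\varepsilon N^d)$ time to bring the walk to near-uniform before applying the coupling; this is why they work with $K-6\varepsilon$ rather than $K-5\varepsilon$, and why Proposition~\ref{prop: bounds on distances to random walk} is reduced to Proposition~\ref{prop: uniform case: bounds on distances to random walk}. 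Your shortcut can be made rigorous by appealing directly to \cite[Theorem~5.1]{prevost2023phase}, which under $\WP_{\bs 0}$ gives the inclusion for \emph{every} macroscopic box (so the translate $\WP_{x_0}$ still yields the coupling on the fixed $Q_\delta$), but as written the justification is thin. Otherwise the argument is correct; your choice of a fixed $\rho=\rho(K,\eta)$ in place of the paper's $\rho_N=(\log N)^{-1}$, and your first-moment Markov bound for $E_3^{(b)}$ in place of the paper's i.i.d.\ domination plus Chebyshev, are harmless variants.
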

Its proof and the proofs of all ingredients are deferred to
Section \ref{sec: proof of the lower bound on event E prob}. 

\smallskip

We now give the second necessary ingredient in the proof of lower bound in Theorem \ref{thm: large deviation}, which is a control on the probability cost of loop insertion at {\bf Stage 3}. 
  We define $\L_{F}^{\alpha}(\omega)$, $\bs{r}_{x}(\omega)$ and $\mdist(F(\omega))$ similarly for the finite path $\omega \in \Omega_{T_3}$ (see below \eqref{eq: random walk paths} for definition); cf.\ below \eqref{eq: prob: single late point}, \eqref{eq: def: distance to random walk between T_2 and T_3} and \eqref{eq: def: modified total distance} respectively. 
We also define the following collection of good paths associated with a set $F\subset \T_N$. Note that the event $E$ (see \eqref{eq: def: event E} for definition) corresponds to this path collection with $F=\HH_\delta$. However, we write the definition in a more general form to also adapt to the proof of Theorem \ref{thm: rough lower bound}; see Section \ref{sec: proof of the rough lower bound} for details.

\begin{definition}
\label{def: good paths prop}
    For $F \subset \T_N$, we say a path collection $\CA \subset \Omega_{T_3}$ is $(F,\varepsilon,M_0)$-good if for all $\omega \in \CA$,
    \begin{subequations}
    \label{good paths necessary properties}
        \begin{gather}
        \label{covering complement of F}
            \T_N \setminus F \subset \omega[0,T_3];\\
        \label{no too large late island}
            \bs{r}_{x}(\omega) \leq l_N \text{ for all } x \in  \L_{F}^{\beta_N}(\omega);\\
        \label{no too much cost for extra loops}
            2d\cdot\mdist\big(\L_{F}^{\beta_N}(\omega)\big)\leq J(F,M_0),
        \end{gather}
    \end{subequations}
    where 
\begin{equation}
    \label{cost of inserting loops}
        J(F,M_0)\overset{\rm def.}{=} 2d M_0\cdot|F|N^{-d\gamma}.
    \end{equation}
    For an event $A\in \CF_{T_3}$, we also say $A$ is $(F,\varepsilon,M_0)$-good if the path collection $\CA=\pi_{T_3}(A)$ (see \eqref{eq: paths correspond to event} for definition) is $(F,\varepsilon,M_0)$-good.
\end{definition}
We now quickly comment on the motivation of such a definition before stating the control (i.e.\ Proposition \ref{prop: loop insertion ineq}). In Section \ref{sec: Construction of the mapping Psi and its properties}, we will use \eqref{no too much cost for extra loops} to control the total length of loops inserted at Stage 3, and \eqref{no too large late island} to ensure that the loops inserted are mutually disjoint, so that the original trajectory can be recovered deterministically from the trajectory after insertion.

\begin{prop}
    \label{prop: loop insertion ineq}
    For any $F \subset \T_N$ and $M_0>0$ such that
    \begin{equation}
    \label{eq: cost of inserting loops not large}
        J(F,M_0)+2 \leq \varepsilon N^d
    \end{equation}
 holds, and for any $(F,\varepsilon,M_0)$-good event $A$, we have
    \begin{equation}
    \label{eq: loop insertion ineq}
        \P\big(U_{\gamma,N}\big) \geq \P(A) \cdot (2d)^{-J(F, M_0)}.
    \end{equation}
\end{prop}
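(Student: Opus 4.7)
The plan is to construct an injective map $\Psi$ from $\pi_{T_3}(A)$ (see \eqref{eq: paths correspond to event}) to random-walk trajectories of a common length $T_3+J(F,M_0)$ that cover $\T_N$. Once such a $\Psi$ is available, the events $\{X|_{T_3+J(F,M_0)}=\Psi(\omega)\}$ for distinct $\omega\in\pi_{T_3}(A)$ are pairwise disjoint (since all outputs have the same length) and each is contained in $U_{\gamma,N}$ (since each output covers $\T_N$ and $T_3+J(F,M_0)\le T_4$ by \eqref{eq: cost of inserting loops not large}), yielding
\begin{align*}
    \P\big(U_{\gamma,N}\big)
    &\ge \sum_{\omega\in \pi_{T_3}(A)} \P\big(X|_{T_3+J(F,M_0)}=\Psi(\omega)\big)\\
    &= |\pi_{T_3}(A)|\cdot N^{-d}(2d)^{-T_3-J(F,M_0)}
    = \P(A)\cdot (2d)^{-J(F,M_0)},
\end{align*}
which is precisely \eqref{eq: loop insertion ineq}.

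The construction of $\Psi$ proceeds as follows. Given $\omega\in \pi_{T_3}(A)$, condition \eqref{covering complement of F} forces all uncovered sites of $\omega[0,T_3]$ to lie in $F$, and since $T_3/t_{\cov}>\beta_N$ they form a subset of $\L_F^{\beta_N}(\omega)$. Partition $\L_F^{\beta_N}(\omega)$ into the clusters of its auxiliary graph, with representatives $\R\big(\L_F^{\beta_N}(\omega)\big)$. For each representative $x$, by \eqref{no too large late island} there is a time $t\in(T_2,T_3]$ with $d_\infty\big(\omega(t),x\big)=\bs{r}_x\le l_N$; at such a time I insert into $\omega$ a self-avoiding loop traveling from $\omega(t)$ to $x$ and back, of length at most $2d(\bs{r}_x+2)$. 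For each non-representative $y$ in the cluster of $x$, I insert an additional loop at $x$ going to $y$ and back, of length at most $6d\,l_N$. By \eqref{no too much cost for extra loops}, the total length of the inserted loops is at most $2d\cdot \mdist\big(\L_F^{\beta_N}(\omega)\big)\le J(F,M_0)$. Finally, I pad the resulting trajectory to the common length $T_3+J(F,M_0)$ by repeating a canonical back-and-forth step at a fixed position, which is accommodated by the slack provided in \eqref{eq: cost of inserting loops not large}.

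The main obstacle is to verify that $\Psi$ is injective, i.e., that $\omega$ can be recovered deterministically from $\Psi(\omega)$. This will rely on the spatial disjointness of the inserted loops: distinct clusters of $\L_F^{\beta_N}(\omega)$ are mutually at $l^\infty$-distance greater than $3l_N$, and \eqref{no too large late island} confines each inserted loop to an $l_N$-neighborhood of its cluster, so the loops can be identified by their visits to the late-point clusters and removed unambiguously (and the padding is then detected as the surplus of length past $T_3+L(\omega)$). The detailed geometric construction together with the injectivity verification (cf.\ \eqref{eq: injective mapping Psi}) constitute the bulk of the proof and are carried out in Section \ref{sec: Construction of the mapping Psi and its properties}; the probabilistic accounting in the first paragraph is then immediate.
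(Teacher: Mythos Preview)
Your proposal is correct and follows essentially the same approach as the paper. The paper states the construction as a map $\Psi$ from $\CA=\pi_{T_3}(A)$ to \emph{sets} of paths of length $T_4$ (Proposition \ref{prop: Psi properties}), letting the walk roam freely after loop insertion rather than padding deterministically; your single-valued $\Psi$ with canonical padding to length $T_3+J(\F,M_0)$ is a harmless variant that yields the identical counting argument. The paper also packages the non-representative vertices of each cluster into a single tree-loop $\beta^{\mathrm{Tree}}(\sfC,\x)$ rather than inserting one loop per vertex, but this is again cosmetic and gives the same length bound. Your identification of the injectivity verification as the crux, deferred to Section~\ref{sec: Construction of the mapping Psi and its properties}, matches the paper exactly.
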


Since the proof of Proposition \ref{prop: loop insertion ineq} relies solely on combinatorial arguments and is relatively independent of other parts of this section, we defer the proof to Section \ref{sec: Construction of the mapping Psi and its properties}.
We now provide the proof of the lower bound in Theorem \ref{thm: large deviation} assuming Propositions \ref{prop: prob: lower bound on event E} and \ref{prop: loop insertion ineq}.

\begin{proof}[Proof of the lower bound in Theorem \ref{thm: large deviation}]
    Pick $\eta, \delta>0$. By Proposition \ref{prop: prob: lower bound on event E}, there exist $K,\varepsilon,M>0$ such that \eqref{eq: prob: E lower bound} holds for large $N$. From now on, we fix $K$, $\varepsilon$ and $M$. 
    We claim that
    \begin{equation}
    \label{eq: E good}
        E \text{ is an $(\HH_\delta,\varepsilon,2M)$-good event, i.e., } \CE\overset{\text{def.}}{=}\pi_{T_3}(E) \text{ is $(\HH_\delta,\varepsilon,2M)$-good}.
    \end{equation}
    By the definition of the event $E$ (see \eqref{eq: def: event E}), it is clear that \eqref{covering complement of F} holds and 
    \begin{equation}
    \label{eq: path-distance domination}
        \bs{r}_{x}(\omega) \leq l_N \text{ for all } x \in \T_N, 
    \end{equation}
    which implies \eqref{no too large late island}. 
    For $\omega \in \CE$, 
    \begin{equation}
    \label{eq: mdist bound}
        \mdist\big(\L^{\g}_{\HH_\delta}(\omega)\big)\leq 
        M\cdot d\delta\cdot N^{d(1-\gamma)}\leq 2M\cdot|\HH_\delta|N^{-d\gamma}.
    \end{equation}
    Note that by \eqref{eq: path-distance domination}, $\bs{r}_x(\omega)+2\leq 3l_N$ for all $x\in \L^{\g}_{\HH_\delta}(\omega)$, which implies that $\mdist(F)$ decreases when we remove a point from $F \subset \L^{\g}_{\HH_\delta}(\omega)$. 
    Since $\L^{\beta_N}_{\HH_\delta}(\omega)\subset\L^{\g}_{\HH_\delta}(\omega)$, 
    we get
    \begin{equation}
    \label{eq: control mdist}
        \mdist\big(\L^{\beta_N}_{\HH_\delta}(\omega)\big)\leq \mdist\big(\L^{\g}_{\HH_\delta}(\omega)\big).
    \end{equation}
    Combining \eqref{eq: mdist bound} and \eqref{eq: control mdist} gives \eqref{eq: E good}. 
    We now take $F=\HH_\delta$, $M_0=2M$ and $N$ large such that \eqref{eq: cost of inserting loops not large} holds. Then we can apply Proposition \ref{prop: loop insertion ineq} with $A=E$ and obtain the lower bound in Theorem \ref{thm: large deviation} with the help of \eqref{eq: prob: E lower bound} and by sending $\eta$ and $\delta$ to 0. 
\end{proof}

\subsection{Proof of Proposition \ref{prop: prob: lower bound on event E}}
\label{sec: proof of the lower bound on event E prob}
To prove Proposition \ref{prop: prob: lower bound on event E}, we deal with the events $E_1$, $E_2$ and $E_3$ (see below \eqref{eq: def: event E} for definition) separately in Lemma \ref{lem: prob: late points nice}, Propositions \ref{prop: prob: cover late points in a big box} and \ref{prop: bounds on distances to random walk} below.  
Proposition \ref{prop: prob: lower bound on event E} then follows directly from the Markov property.
Note that for the clarity of presentation, Propositions \ref{prop: prob: cover late points in a big box} and \ref{prop: bounds on distances to random walk} are written for random walks after a time shift of $T_1$ and $T_2$ respectively rather than the original walk. We also mention that the parameters in this subsection implicitly satisfy \eqref{eq: def: parameters}. 

\smallskip

We start with $E_1=\{\L^{\g} \text{ is $(\delta,\varepsilon)$-nice}\}$.
\begin{lemma}
    \label{lem: prob: late points nice}
    $\displaystyle\lim_{N\to\infty} \P\big(E_1\big) = 1.$
\end{lemma}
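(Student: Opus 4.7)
The plan is to write $E_1^c$ as the union of the three ``bad'' events corresponding to the three requirements in \eqref{eq:nice}---failure of $(\BQ_\delta,\varepsilon)$-regularity of $\L^{\alpha_N}$, failure of $(\HH_\delta,1/2)$-regularity, and failure of \eqref{eq: late points in the edge separated}---and to show each has probability $o(1)$.

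The two regularity events are direct applications of Lemma \ref{lem: late points concentration}. Indeed, $|\BQ_\delta|/N^d \to (1-\delta)^d > 0$ and $|\HH_\delta|/N^d \to 1-(1-\delta)^d > 0$ as $N\to\infty$, so both $F=\BQ_\delta$ and $F=\HH_\delta$ have $|F|=\lfloor \lambda N^d\rfloor$ with $\lambda$ eventually bounded below by a constant depending only on $\delta$. Since $\alpha_N = \gamma - K/u_N \to \gamma\in(0,1)$, we have $\alpha_N\in [\gamma/2,1)$ for $N$ large, and Lemma \ref{lem: late points concentration} with tolerance $\varepsilon$ (resp.\ $1/2$) bounds the probability of non-regularity by $o(1)$.

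The separation condition \eqref{eq: late points in the edge separated} is where the main work lies, and I plan to handle it by a first-moment estimate. The key observation is that every non-representative vertex of $G(F)$ lies in a component of size at least two and therefore has some graph-neighbor in $F$, so
\begin{equation*}
|F|-|\R(F)| \;\leq\; \sum_{y\in F}\mathbf{1}\bigl\{\exists\, z\in F\setminus\{y\},\ d_\infty(y,z)\leq 3l_N\bigr\}.
\end{equation*}
Taking $F=\L^{\alpha_N}_{\HH_\delta}$ and taking expectations yields
\begin{equation*}
\E\bigl[|\L^{\alpha_N}_{\HH_\delta}|-|\R(\L^{\alpha_N}_{\HH_\delta})|\bigr] \;\leq\; \sum_{y\in\HH_\delta}\ \sum_{\substack{z\in\T_N,\ z\neq y\\ d_\infty(y,z)\leq 3l_N}}\P\bigl(\{y,z\}\subset\L^{\alpha_N}\bigr).
\end{equation*}
Comparing \eqref{eq: prob: interlacements vacant set} with \eqref{eq: RI two points prob} reads off $\capacity(\{y,z\})=2/(g(0)+g(y-z))$, and then \eqref{eq: prob: late points} with $G=\{y,z\}$ gives $\P(\{y,z\}\subset\L^{\alpha_N})\leq CN^{-2d\alpha_N g(0)/(g(0)+g(y-z))}$. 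Since $|y-z|_1\geq 1$ forces $g(y-z)\leq g(e_1)$ by \eqref{eq: green function comparison}, this is $\leq CN^{-d\alpha_N c_4}$ with $c_4$ as in \eqref{eq: def: c_3,4}, and counting the $\leq (6l_N+1)^d\leq C(\log N)^{4d}$ admissible $z$'s gives
\begin{equation*}
\E\bigl[|\L^{\alpha_N}_{\HH_\delta}|-|\R(\L^{\alpha_N}_{\HH_\delta})|\bigr] \;\leq\; C(\log N)^{4d}\,N^{d(1-c_4\alpha_N)}.
\end{equation*}

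Markov's inequality then bounds the probability of failing \eqref{eq: late points in the edge separated} by $C(\log N)^{4d}N^{-d\alpha_N(c_4-c_3)}$, which tends to $0$ as $N\to\infty$ because $c_4>c_3$ and $\alpha_N\to\gamma>0$; this is precisely why $c_3$ is taken strictly below $c_4$ in \eqref{eq: def: c_3,4}. The only mildly subtle point is the capacity identity $\capacity(\{y,z\})=2/(g(0)+g(y-z))$ together with the uniform bound $g(0)\capacity(\{y,z\})\geq c_4>1$ for $y\neq z$, which makes the two-point late-point probability small enough to beat the polylogarithmic counting factor; everything else is a routine Markov/union-bound assembly.
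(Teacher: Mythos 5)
Your proof is correct and follows essentially the same route as the paper: the two regularity requirements are disposed of by Lemma \ref{lem: late points concentration}, and the separation requirement \eqref{eq: late points in the edge separated} is handled by a first-moment estimate on nearby pairs of late points, using \eqref{eq: prob: late points} together with \eqref{eq: green function comparison} to get the exponent $-c_4 d\alpha_N$, followed by Markov's inequality. The only cosmetic difference is in the combinatorial bookkeeping: the paper bounds $|F|-|\R(F)|=|F|-|\C(F)|\leq|E(F)|$ and estimates $\E|E(\L^{\alpha_N}_{\HH_\delta})|$, whereas you bound $|F|-|\R(F)|$ by the number of non-isolated vertices of $G(F)$ and union-bound its expectation; these agree up to a factor of two and lead to the same conclusion.
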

\begin{proof}
    Note that for large $N$, $\g$ is bounded below from 0. By Lemma \ref{lem: late points concentration}, 
    \begin{equation*}
        \lim_{N\to \infty}\P\big(\text{$\L^{\g}$ is $(\BQ_\delta, \varepsilon)$-regular and $(\HH_\delta,1/2)$-regular}\big) = 1.
    \end{equation*}
    Recall the notation on the auxiliary graph $G(F), F\subset \T_N$ below \eqref{eq: late points in the bulk and edge typical}. 
    It is a basic graph property that $|E(F)|\geq |F|-|\C(F)|$. Noting $|\C(F)|=|\R(F)|$ and taking $F=\L^{\g}_{\HH_\delta}$, we get
    $\big|E\big(\L^{\g}_{\HH_\delta}\big)\big|\geq \big|\L^{\alpha_N}_{\HH_\delta}\big|-\big|\R\big(\L^{\alpha_N}_{\HH_\delta}\big)\big|$. 
    Thus, it suffices to prove
    \begin{equation}
    \label{eq: prob: pairs relatively small}
    \lim_{N\to\infty}
    \P\Big(\big|E\big(\L^{\g}_{\HH_\delta}\big)\big| \leq 
    N^{d(1-c_3\cdot\g)}\Big) = 1
    \end{equation}
    (see \eqref{eq: def: c_3,4} for the definitions of $c_3$ and $c_4$ and note that $c_3<c_4$). 
    By the translation invariance of $\L^{\g}$, for $x\neq y \in \T_N$,
    \begin{equation*}
        \P(x,y \in \L^{\g}) = \P(0,x-y \in \L^{\g}) \overset{\eqref{eq: prob: late points}}{\underset{\eqref{eq: green function comparison}}{\leq}} 
        CN^{-d\g\cdot \frac{2g(0)}{g(0)+g(e_1)}}\overset{\eqref{eq: def: c_3,4}}{=}
        CN^{-c_4\cdot d\g}.
    \end{equation*}
    Hence, 
    \begin{equation}
    \label{eq: expectation of pairs}
\E\big[\big|E\big(\L^{\g}_{\HH_\delta}\big)\big|\big] \leq
        \sum_{x\neq y\in \T_N, |x-y|\leq 3l_N}\P(x,y \in \L^{\g})\leq 
        CN^{d(1-c_4\cdot\g)}l_N^d, 
    \end{equation}
    which implies \eqref{eq: prob: pairs relatively small} by Markov's inequality.
\end{proof}

We now turn to $E_2$. 
For $\F\subset \T_N$, we let $\F_{\bulk}=\F\cap \BQ_\delta$ and $\F_{\edge}=\F\cap \HH_\delta$. We write $F_{\bulk}$ for the image of $\F_{\bulk}$ under the canonical map from $\T_N$ to $Q(0,N)$. Recall also $\G_{\alpha_N,\varepsilon}$ is the collection of $(\delta,\varepsilon)$-nice configurations of $\L^{\alpha_N}$. 
\begin{prop}
    \label{prop: prob: cover late points in a big box}
    For any $\F\in \G_{\g, \varepsilon}$ and $x\in \T_N$, 
    \begin{equation}
    \label{eq: prob: cover late points in a big box}
        \P_x\big(E_2'\big) \geq
        \exp\left(-\xi(K,\varepsilon,N)\cdot N^{d(1-\gamma)}\right), 
    \end{equation}
    where 
    $$
    E_2'\overset{\rm def.}{=}\Big\{\F_{\bulk} \subset X\big[0,(K-5\varepsilon)N^d-1\big]\Big\},
    $$
    and $\xi(K,\varepsilon, N)>0$ satisfies $\displaystyle\lim_{K\to \infty, \varepsilon\to 0}\lim_{N\to\infty} \xi(K,\varepsilon, N)=1$.
\end{prop}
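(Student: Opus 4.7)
The plan is to combine the macroscopic coupling between the random walk and random interlacements (Proposition~\ref{prop: macroscopic coupling}) with the FKG lower bound (Lemma~\ref{lem: FKG inequality}), exploiting the $(\BQ_\delta,\varepsilon)$-regularity of $\L^{\g}$ guaranteed by $\F\in\G_{\g,\varepsilon}$. Since the mixing time of simple random walk on $\T_N$ is $O(N^2\log N)\ll(K-5\varepsilon)N^d$, a standard burn-in argument reduces the statement under $\P_x$ to the corresponding one under $\P$ at multiplicative cost $1+o(1)$, so I will work under $\P$ in the sequel.

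Next I would apply Proposition~\ref{prop: macroscopic coupling} with $u=K-6\varepsilon$ (so that $uN^d\leq(K-5\varepsilon)N^d-1$ for large $N$) and a vanishing parameter $\rho=\rho_N\to 0$ chosen so that $\rho_N\sqrt{N^{d-2}}\gg N^{d(1-\gamma)}$; such a choice is feasible precisely because $\gamma>\gamma_0=\frac{d+2}{2d}$ (e.g.\ $\rho_N=N^{-a}$ for any $a<d\gamma-\frac{d+2}{2}$). With this choice, the error term in \eqref{eq: macroscopic coupling} is $o(\exp(-N^{d(1-\gamma)}))$, hence negligible. On the good coupling event, the inclusion $\I^{u(1-\rho_N)}\cap Q_\delta\subset X[0,uN^d]\cap\BQ_\delta$ shows that whenever the lift $F_{\bulk}\subset Q_\delta$ of $\F_{\bulk}$ is contained in $\I^{u(1-\rho_N)}$, the event $E_2'$ occurs, and Lemma~\ref{lem: FKG inequality} gives
\begin{equation*}
\mathbb{P}\big(F_{\bulk}\subset\I^{u(1-\rho_N)}\big)\geq\big(1-\exp(-u(1-\rho_N)/g(0))\big)^{|F_{\bulk}|}.
\end{equation*}

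To conclude, the $(\BQ_\delta,\varepsilon)$-regularity built into $\F\in\G_{\g,\varepsilon}$ together with the identity $N^{dK/u_N}=e^{K/g(0)}$ yields
\begin{equation*}
|F_{\bulk}|\leq(1+\varepsilon)|\BQ_\delta|N^{-d\g}\sim(1+\varepsilon)(1-\delta)^d e^{K/g(0)}N^{d(1-\gamma)}.
\end{equation*}
Taking $-\log$ and using $-\log(1-x)=x(1+O(x))$ for small $x$, the exponent is bounded by
\begin{equation*}
|F_{\bulk}|\cdot\exp(-u(1-\rho_N)/g(0))(1+o(1))\leq(1+\varepsilon)(1-\delta)^d e^{6\varepsilon/g(0)}N^{d(1-\gamma)}(1+o(1)),
\end{equation*}
delivering the claim with $\xi(K,\varepsilon,N)\to(1-\delta)^d\leq 1$ as $N\to\infty$, then $\varepsilon\to 0$ and $K\to\infty$; the residual $(1-\delta)^d$ factor is harmless in the downstream application to Proposition~\ref{prop: prob: lower bound on event E} because $\delta$ is fixed there and only sent to zero at the very end of the main theorem.

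The principal obstacle is the coupling error control: the term $\exp(-c\rho\sqrt{uN^{d-2}})$ has to beat the target $\exp(-N^{d(1-\gamma)})$, which is precisely what forces the restriction $\gamma>\gamma_0$, leaving no slack in this argument. As already noted in Section~\ref{sec: outline and discussion}, any extension of the sharp regime would require a quantitative strengthening of Proposition~\ref{prop: macroscopic coupling}.
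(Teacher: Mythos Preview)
Your proposal is correct and follows essentially the same route as the paper: a mixing step to pass from $\P_x$ to $\P$ (the paper uses an explicit maximal coupling at a random time $T\approx\varepsilon N^d/2$, producing an additive error $\exp(-c\varepsilon N^{d-2})$ rather than your stated multiplicative $1+o(1)$, but these are equivalent once you note $d-2>d(1-\gamma)$), then the macroscopic coupling with $u=K-6\varepsilon$ and a vanishing $\rho_N$ (the paper takes $\rho_N=(\log N)^{-1}$ instead of your $N^{-a}$, either works), then FKG combined with the regularity bound on $|F_{\bulk}|$. The paper simply drops the factor $(1-\delta)^d$ via $|Q_\delta|\leq N^d$ and writes $\xi$ in closed form rather than Taylor-expanding, so your limit $(1-\delta)^d$ is a harmless sharpening of its $\xi\to 1$.
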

We will return to its proof after we prove Proposition \ref{prop: prob: lower bound on event E}.

Finally, we deal with $E_3$. With the definition of $\bs{r}_\cdot$ and $\mdist(\cdot)$ in \eqref{eq: def: distance to random walk between T_2 and T_3} and \eqref{eq: def: modified total distance} in mind, for $x \in \T_N$ and $\upsilon>0$, we write
\begin{equation}
\label{eq: def: distance to random walk at the beginning}
    \bs{r}_{x}^{\upsilon}=d_{\infty}\big(x, X\big[0,\upsilon N^d\big]\big)
\end{equation}
and 
\begin{equation}
\label{eq: def: modified total distance at the beginning}
    \mdist'(F,\upsilon)=\sum_{x\in \R(F)}(\bs{r}_x^{\upsilon}+2)+3l_N\cdot (|F|-|\R(F)|).
\end{equation}
\begin{prop}
    \label{prop: bounds on distances to random walk}
    There exists a constant $M=M(K,\varepsilon)>0$ such that
    \begin{equation}
    \label{eq: bounds on distances to random walk}
        \lim_{N\to \infty}\ 
        \inf_{y\in \T_N, \F\in \G_{\g, \varepsilon}}
        \P_y\left(E_3' \right) = 1,
    \end{equation}
    where
    $$
    E_3'\overset{\rm def.}{=}\left\{\bs{r}_{x}^{3\varepsilon}\leq l_N \text{ for all }x\in\T_N \right\}\bigcap
\left\{        \mdist'\big(\F_{\edge}, 3\varepsilon\big)\leq 
        M\cdot d\delta\cdot N^{d(1-\gamma)}\right\}.
    $$
\end{prop}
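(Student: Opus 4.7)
My plan is to verify the two constituents of $E_3'$---uniform covering at scale $l_N$ and a linear-in-$|\R(\F_{\edge})|$ bound on $\mdist'$---each with probability tending to $1$ uniformly in $y\in\T_N$ and $\F\in\G_{\g,\varepsilon}$.

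First I would establish the uniform covering. For fixed $x\in\T_N$, the event $\{\bs{r}_x^{3\varepsilon}>l_N\}$ says that $X[0,3\varepsilon N^d]$ avoids the cube $Q(x,2l_N+1)$ (which is exactly the closed $\ell^\infty$-ball of radius $l_N$ around $x$). Applying Proposition \ref{prop: macroscopic coupling} with $u=3\varepsilon$ and a small constant $\rho$, together with \eqref{eq: prob: interlacements vacant set} and the capacity bound \eqref{eq: capacity bound}, this probability is at most $\exp(-c\varepsilon l_N^{d-2})$ modulo a super-polynomially small coupling error. With $l_N=(\log N)^4$ and $d\geq 3$, this bound is much smaller than $N^{-d}$, so a union bound over $\T_N$ handles the first event in $E_3'$.

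For the bound on $\mdist'$ I would decompose
\[
\mdist'(\F_{\edge},3\varepsilon)=\sum_{x\in\R(\F_{\edge})}(\bs{r}_x^{3\varepsilon}+2)+3l_N\bigl(|\F_{\edge}|-|\R(\F_{\edge})|\bigr).
\]
The second summand is bounded deterministically by niceness: $|\F_{\edge}|-|\R(\F_{\edge})|\leq N^{d(1-c_3\g)}$ with $c_3>1$ gives $3l_N\cdot N^{d(1-c_3\g)}=o(N^{d(1-\gamma)})$. The $2|\R(\F_{\edge})|$ portion of the first summand is of the right order since $|\R(\F_{\edge})|\leq|\F_{\edge}|\leq C(K)d\delta N^{d(1-\gamma)}$ by niceness. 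The core task is thus to show $\sum_{x\in\R(\F_{\edge})}\bs{r}_x^{3\varepsilon}\leq C(K,\varepsilon)d\delta N^{d(1-\gamma)}$ with probability tending to $1$. The crucial deterministic input is that any two distinct representatives in $\R(\F_{\edge})$ lie at $\ell^\infty$-distance strictly greater than $3l_N$ (else they would share an edge in $G(\F_{\edge})$), so $\R(\F_{\edge})$ is $(1+\delta_0)(2l_N+1)$-well separated for small $\delta_0>0$ and large $N$. I would apply Proposition \ref{prop: RW/RI to independent RI coupling} with $R=2l_N+1$, $u=3\varepsilon$, small constant $\rho$, and $F=\R(\F_{\edge})$, producing independent random interlacements $\I^{(x),3\varepsilon(1\pm\rho)}$ associated with the boxes $Q(x,R)$; the coupling error is super-polynomially small in $N$. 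On the intersection of the coupling's good event with the covering event from the previous step, the sandwich yields
\[
\bs{r}_x^{3\varepsilon}\leq Y_x:=d_\infty\bigl(x,\I^{(x),3\varepsilon(1-\rho)}\cap Q(x,2l_N+1)\bigr)\wedge l_N
\]
for every $x\in\R(\F_{\edge})$. The $Y_x$'s are independent, bounded by $l_N$, and satisfy $\E[Y_x]\leq C(\varepsilon)$ (since $\P(Y_x>r)\leq\exp(-c\varepsilon r^{d-2})$ via \eqref{eq: prob: interlacements vacant set} and \eqref{eq: capacity bound}). Hoeffding's inequality then gives $\sum_{x}Y_x\leq 2\E[\sum_{x}Y_x]\leq C(K,\varepsilon)d\delta N^{d(1-\gamma)}$ with probability tending to $1$ super-polynomially, because $|\R(\F_{\edge})|/l_N^2\to\infty$. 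Choosing $M=M(K,\varepsilon)$ large enough to absorb all additive constants completes the argument.

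The main obstacle will be this concentration step. A direct second-moment bound for $\sum_{x}\bs{r}_x^{3\varepsilon}$ encounters correlations decaying only like $|x-y|^{-(d-2)}$; summing over the torus yields an $N^2$ factor that dominates $|\R(\F_{\edge})|\sim N^{d(1-\gamma)}$ once $\gamma>(d-2)/d$, so plain Chebyshev fails in our regime $\gamma>\gamma_0$. The resolution is to exploit the deterministic mesoscopic separation of $\R(\F_{\edge})$ enforced by niceness in order to decouple via Proposition \ref{prop: RW/RI to independent RI coupling}, reducing the sum to independent bounded summands for which Hoeffding applies directly.
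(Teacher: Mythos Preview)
Your overall strategy matches the paper's: use the macroscopic coupling (Proposition~\ref{prop: macroscopic coupling}) for the uniform covering at scale $l_N$, exploit the deterministic $3l_N$-separation of $\R(\F_{\edge})$ to invoke Proposition~\ref{prop: RW/RI to independent RI coupling} and dominate the distances $\bs{r}_x$ by independent copies of $r_0^\varepsilon$, and then conclude by a concentration estimate. Your use of Hoeffding in place of the paper's Chebyshev is a harmless (in fact stronger) variation.

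There is, however, one genuine gap. Both Proposition~\ref{prop: RW/RI to independent RI coupling} and Proposition~\ref{prop: macroscopic coupling} are stated for the random walk under $\P$, i.e.\ started from the \emph{uniform} distribution on $\T_N$, not under $\P_y$ for a fixed $y$. Your proposal applies them directly under $\P_y$, which is not justified by the statements as given. The paper handles this by inserting a mixing step: one couples $(X_n)_{n\ge 0}$ under $\P_y$ with a walk $(Z_n)_{n\ge 0}$ under $\P$ so that, after a burn-in of about $\varepsilon N^d/2$ steps, the two walks agree except on an event of probability $\exp(-c\varepsilon N^{d-2})$ (cf.\ the argument using \cite[Proposition~4.7 and Theorem~5.6]{LP17} in the proof of Proposition~\ref{prop: prob: cover late points in a big box}). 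This is why the paper first proves the analogous statement for $\P$ with $2\varepsilon$ in place of $3\varepsilon$ (Proposition~\ref{prop: uniform case: bounds on distances to random walk}) and then transfers it to $\P_y$ via $Z[0,2\varepsilon N^d]\subset X[0,3\varepsilon N^d]$. Once you add this reduction, your argument goes through; without it, the invocation of the coupling propositions is unjustified.
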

Again, we postpone its proof till we prove Proposition \ref{prop: prob: lower bound on event E}.
\begin{proof}[Proof of Proposition \ref{prop: prob: lower bound on event E} assuming Propositions \ref{prop: prob: cover late points in a big box} and \ref{prop: bounds on distances to random walk}]
    We apply the Markov property of the random walk at time $T_1, T_2$ and $T_3$ (see \eqref{eq: def: time points} for definitions) combined with Lemma \ref{lem: prob: late points nice}, Propositions \ref{prop: prob: cover late points in a big box} and \ref{prop: bounds on distances to random walk}, and immediately get \eqref{eq: prob: E lower bound} by taking $K$ sufficiently large and $\varepsilon$ sufficiently small. 
    In particular, we use the weaker version of Proposition \ref{prop: bounds on distances to random walk} with $d_{\infty}\big(x, X\big[0,4\varepsilon N^d-1\big]\big)$ in place of $\bs{r}_x^{3\varepsilon}$ for the time scale 
    $(T_2, T_3]$. 
\end{proof}

We now use Proposition \ref{prop: macroscopic coupling} and Lemma \ref{lem: FKG inequality} to prove Proposition \ref{prop: prob: cover late points in a big box}. To apply Proposition \ref{prop: macroscopic coupling}, we need the starting point of the random walk to be uniform on the torus, and we will use a mixing argument to do this. 
\begin{proof}[Proof of Proposition \ref{prop: prob: cover late points in a big box}]
    Let $T$ be a random time taking the value $\lfloor\varepsilon N^d/2\rfloor -1$ or $\lfloor\varepsilon N^d/2\rfloor$ with equal probability. By \cite[Proposition 4.7 and Theorem 5.6]{LP17}, there exists a coupling $\mathsf{Q}$ between $(X_n)_{n\geq 0}$ with law $\P_x$ and $(Z_n)_{n\geq 0}$ with law $\P$, under which $(X_{n+T})_{n\geq 0}$ coincides with $(Z_n)_{n\geq 0}$ with $\mathsf{Q}$-probability at least $1-\exp\big(-c\varepsilon N^{d-2}\big)$, which implies
    \begin{equation}
    \label{eq: mixing coupling}
        \P_x\big(\F_{\bulk} \subset X\big[0,(K-5\varepsilon)N^d-1\big]\big) \geq \P\big(\F_{\bulk} \subset Z\big[0,(K-6\varepsilon) N^d\big]\big)-\exp\big(-c\varepsilon N^{d-2}\big).
    \end{equation}
    We then apply Proposition \ref{prop: macroscopic coupling} with $u=K-6\varepsilon$ and $\rho=\rho_N=(\log N)^{-1}$, and get
    \begin{equation}
    \label{eq: cover a box by RW to RI}
    \begin{split}
        &\P\big(\F_{\bulk} \subset Z\big[0,(K-6\varepsilon) N^d\big]\big)\\ \geq \;&
        \mathbb{P}\Big(F_{\bulk} \subset \I^{(K-6\varepsilon)(1-\rho_N)}\Big)-
        CKN^{3d}\exp\Big(-c\rho_N\sqrt{(K-6\varepsilon)N^{d-2}}\Big)
    \end{split}
    \end{equation}
    (recall that $c$ and $C$ here only depend on $\delta$ and $d$). 
    By Lemma \ref{lem: FKG inequality}, 
    \begin{equation*}
    \label{eq: FKG inequality}
        \mathbb{P}\Big(F_{\bulk} \subset \I^{(K-6\varepsilon)(1-\rho_N)}\Big) \geq 
        \big(1-\exp[-(K-6\varepsilon)(1-\rho_N)/g(0)]\big)^{|F_{\bulk}|}.
    \end{equation*}
    Since $\F\in \G_{\g,\varepsilon}$, 
    \begin{equation*}
    \label{eq: F is a nice configuration}
        |F_{\bulk}|=|\F_{\bulk}|\leq (1+\varepsilon)|Q_\delta|N^{-d\g}
        \overset{\eqref{eq: def: parameters}}{\leq}
        (1+\varepsilon)\exp[K/g(0)]\cdot N^{d(1-\gamma)}. 
    \end{equation*}
    Combining the above four inequalities, and noting that the exponents in the error terms in \eqref{eq: mixing coupling} and \eqref{eq: cover a box by RW to RI} are smaller than $-N^{d(1-\gamma)}$ for large $N$ if $\gamma\in (\frac{d+2}{2d},1)$, we obtain the bound \eqref{eq: prob: cover late points in a big box}
    with 
    \begin{equation*}
        \xi(K,\varepsilon, N)=-(1+\varepsilon)\cdot 
        \exp[K/g(0)]\cdot
        \log\big(1-\exp[-(K-6\varepsilon)(1-\rho_N)/g(0)]\big).
    \end{equation*}
    It is straightforward to check that $\displaystyle\lim_{K\to \infty, \varepsilon\to 0}\lim_{N\to\infty} \xi(K,\varepsilon, N)=1$. 
\end{proof}

In the rest of this subsection, we use Propositions \ref{prop: RW/RI to independent RI coupling} and \ref{prop: macroscopic coupling} to control the distances of points in the torus to the random walk and prove Proposition \ref{prop: bounds on distances to random walk}. 
To use the coupling results, we apply the same mixing argument as in the proof of Proposition \ref{prop: prob: cover late points in a big box}. Then, Proposition \ref{prop: bounds on distances to random walk} is reduced to the following Proposition \ref{prop: uniform case: bounds on distances to random walk} which has a similar claim but is for the random walk starting from the uniform distribution on the torus. 
We will turn to the proof of Proposition \ref{prop: uniform case: bounds on distances to random walk} after showing Proposition \ref{prop: bounds on distances to random walk}. 
For clarity, we define two events (which depend on the parameters in \eqref{eq: def: parameters}):
\begin{equation}
\label{eq: def: E_1,2}
    V_1=\big\{\bs{r}_{x}^{2\varepsilon}\leq l_N \text{ for all }x\in\T_N\big\}, \quad 
    V_2(\F,M)=\left\{\mdist'\big(\F_{\edge}, 2\varepsilon\big)\leq 
    M\cdot d\delta\cdot N^{d(1-\gamma)}\right\}.
\end{equation}

\begin{prop}
\label{prop: uniform case: bounds on distances to random walk}
    There exists a constant $M=M(K,\varepsilon)>0$ such that
    \begin{equation}
    \label{eq: uniform case: bounds on distances to random walk}
        \lim_{N\to \infty}\inf_{\F\in \G_{\g, \varepsilon}}
        \P[V_1\cap V_2(\F,M)] = 1.
    \end{equation}
\end{prop}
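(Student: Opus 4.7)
The plan is to prove $\P(V_1^c) \to 0$ and $\sup_{\F \in \G_{\g,\varepsilon}} \P(V_2(\F,M)^c) \to 0$ separately, using the couplings of Section \ref{sec: Couplings results subsec} to approximate $X[0, 2\varepsilon N^d]$ by random interlacements. For $V_1$: translation invariance of $\P$ (valid since $X_0$ is uniform) gives $\P(V_1^c) \leq N^d \cdot \P(\bs{r}_{\bs{0}}^{2\varepsilon} > l_N)$. Since $Q(\bs{0}, 2l_N+1) \subset \BQ_\delta$ for $N$ large, Proposition \ref{prop: macroscopic coupling} applied with $u = 2\varepsilon$ and $\rho = N^{-1/4}$, combined with \eqref{eq: prob: interlacements vacant set} and \eqref{eq: capacity bound}, bounds $\P(\bs{r}_{\bs{0}}^{2\varepsilon} > l_N)$ by $\exp(-c\varepsilon l_N^{d-2})$ plus a coupling error. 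With $l_N = (\log N)^4$, both quantities are $o(N^{-d})$ for $d \geq 3$, so $\P(V_1^c) = o(1)$.

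For $V_2$: the $(\delta,\varepsilon)$-niceness of $\F$ forces $|\F_{\edge}| \leq \tfrac{3}{2}|\HH_\delta| N^{-d\g}$ and $|\F_{\edge}| - |\R(\F_{\edge})| \leq N^{d(1-c_3\g)}$, so (using $c_3 > 1$ from \eqref{eq: def: c_3,4}) the cluster-loss term $3l_N (|\F_{\edge}| - |\R(\F_{\edge})|)$ in $\mdist'(\F_{\edge}, 2\varepsilon)$ is already $o(N^{d(1-\gamma)})$, while the constant ``$+2$'' contributes $2|\R(\F_{\edge})| \leq C d\delta\, e^{K/g(0)} N^{d(1-\gamma)}$. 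It thus suffices to show that $\sum_{x \in \R(\F_{\edge})} \bs{r}_x^{2\varepsilon}$ is bounded by a constant times $|\R(\F_{\edge})|$ with probability $\to 1$. By construction, $\R(\F_{\edge})$ is $3l_N$-separated in $\ell^\infty$, hence $(1+2)l_N$-well separated, so I apply Proposition \ref{prop: RW/RI to independent RI coupling} with $R = l_N$, $u = 2\varepsilon$, $\rho = 1/2$ and separation parameter $\delta = 2$; the coupling error is at most a polynomial in $N$ times $\exp(-c\sqrt{\varepsilon}(\log N)^{2(d-2)}) = o(1)$. Under the coupling, the sandwich $\I^{(x), u(1-\rho)} \cap Q_R(x) \subset X[0, uN^d] \cap Q_R(\x)$ yields $\bs{r}_x^{2\varepsilon} \leq D_x := d_\infty\big(x,\, \I^{(x), 2\varepsilon(1-\rho)} \cap Q_R(x)\big)$ (with the convention $D_x := R$ if the intersection is empty), and the $D_x$ are independent across $x \in \R(\F_{\edge})$. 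From \eqref{eq: prob: interlacements vacant set}--\eqref{eq: capacity bound}, $\P(D_x > r) \leq \exp(-c\varepsilon r^{d-2})$ uniformly in $x$, so for some $\lambda_0 = \lambda_0(\varepsilon) > 0$ and $C_\varepsilon < \infty$ one has $\E e^{\lambda_0 D_x} \leq C_\varepsilon$. A Chernoff bound then gives
\begin{equation*}
    \WP\Big(\sum_{x \in \R(\F_{\edge})} D_x > M_0 |\R(\F_{\edge})|\Big) \leq \exp\big(|\R(\F_{\edge})|(\log C_\varepsilon - \lambda_0 M_0)\big),
\end{equation*}
which vanishes uniformly in $\F$ once $M_0 = M_0(\varepsilon)$ is chosen large enough, since $|\R(\F_{\edge})| \asymp N^{d(1-\gamma)} \to \infty$ (using $|\F_{\edge}| - |\R(\F_{\edge})| = o(|\F_{\edge}|)$). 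Choosing $M = M(K,\varepsilon)$ of order $(M_0 + 2)\, e^{K/g(0)}$ closes the argument.

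The main technical point is achieving honest concentration of $\sum_{x \in \R(\F_{\edge})} \bs{r}_x^{2\varepsilon}$: a first-moment plus Markov argument would only give a bound tending to a positive constant (not to zero), while a naive union bound over $x \in \R(\F_{\edge})$ would cost an extra $\log N$ factor and spoil the target pre-factor $M \cdot d\delta \cdot N^{d(1-\gamma)}$. Proposition \ref{prop: RW/RI to independent RI coupling} at the mesoscopic scale $R \sim l_N$ is what provides genuine independence of the $D_x$, enabling exponential decay of the bad event at rate $\Omega(N^{d(1-\gamma)})$.
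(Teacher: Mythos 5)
Your argument follows essentially the same route as the paper's (union bound plus the macroscopic coupling for $V_1$; the mesoscopic coupling of Proposition \ref{prop: RW/RI to independent RI coupling} on $\R(\F_{\edge})$ to obtain independent dominating distances for $V_2$, followed by concentration of the sum), with a Chernoff bound in place of the paper's Chebyshev estimate---both suffice since the tail $\exp(-c\varepsilon r^{d-2})$ gives finite exponential moments for $d\geq 3$. One small imprecision to repair: when $\I^{(x),2\varepsilon(1-\rho)}\cap Q_R(x)=\emptyset$, the convention $D_x:=R$ does \emph{not} by itself make the sandwich yield $\bs{r}_{\x}^{2\varepsilon}\leq D_x$; that step additionally requires $V_1$ (so $\bs{r}_{\x}^{2\varepsilon}\leq l_N=R$), meaning $V_1$ and $V_2$ cannot be handled entirely ``separately''---the paper makes exactly this point through the observation that $V_1\cap V_3$ implies $V_4$, and the fix is immediate here since you already prove $\P(V_1^c)\to 0$.
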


\begin{proof}[Proof of Proposition \ref{prop: bounds on distances to random walk} assuming \Cref{prop: uniform case: bounds on distances to random walk}]
    The coupling $\mathsf{Q}$ of random walks $(X_n)_{n\geq 0}$ with law $\P_x$ and $(Z_n)_{n\geq 0}$ with law $\P$ in the proof of Proposition \ref{prop: prob: cover late points in a big box} implies
    \begin{equation}
    \label{eq: same mixing coupling}
        \mathsf{Q}\big(Z\big[0,2\varepsilon N^d\big]\subset X\big[0,3\varepsilon N^d\big]\big) \geq 
        1-\exp\big(-c\varepsilon N^{d-2}\big).
    \end{equation}
    Combining \eqref{eq: uniform case: bounds on distances to random walk} and \eqref{eq: same mixing coupling} gives \eqref{eq: bounds on distances to random walk}.
\end{proof}

Finally, we prove Proposition \ref{prop: uniform case: bounds on distances to random walk}.
Proposition \ref{prop: macroscopic coupling} indicates that to control $\bs{r}_{\x}^{2\varepsilon}$, $\x\in \T_N$ it suffices to control 
\begin{equation} \label{eq: def: distance to random interlacement}
    r_x^{\varepsilon} \overset{\text{def.}}{=} d_{\infty}(x,\I^\varepsilon), \quad x\in \Z^d, 
\end{equation}
which is smaller than $l_N=(\log N)^4$ with high probability. 
Noting that for $\F \subset \T_N$, the $l^\infty$-distance of distinct points in $\R\big(\F_{\edge}\big)$ is greater than $3l_N$, 
we can use Proposition \ref{prop: RW/RI to independent RI coupling} to construct an independent family of random variables with the same law as $r_0^{\varepsilon}$, 
which dominate the distances $\bs{r}_{\x}^{2\varepsilon}, \x\in \R\big(\F_{\edge}\big)$ with high probability. 
Then, we can easily control the total distance $\mdist'\big(\F_{\edge}, 2\varepsilon\big)$ for $\F\in \G_{\g,\varepsilon}$ (recall that $\G_{\g,\varepsilon}$ is the collection of $(\delta,\varepsilon)$-nice configurations of $\L^{\g}$, see \eqref{eq:nice} and below for the definition). 

\begin{proof}[Proof of Proposition \ref{prop: uniform case: bounds on distances to random walk}]We apply Proposition \ref{prop: macroscopic coupling} picking $u,\delta,\rho$ therein as $2\varepsilon, \frac{1}{2}, \frac{1}{2}$, respectively. 
    By \eqref{eq: macroscopic coupling}, 
    \begin{equation}
    \label{eq: distance to RW to RI}
        \P\big(\bs{r}_{\bs{0}}^{2\varepsilon}> l_N\big)\leq
        \mathbb{P}(r_0^{\varepsilon}> l_N)+
        C\varepsilon N^{3d}\exp\left(-c\sqrt{\varepsilon N^{d-2}}\right).
    \end{equation}
    For $r>1$, we have the following tail bound for $r_0^\varepsilon$:
    \begin{equation}
    \label{eq: tail bound of the distance to RI}
        \mathbb{P}(r_0^{\varepsilon}>r)\leq \mathbb{P}(Q(0,r)\subset \V^\varepsilon)\overset{\eqref{eq: prob: interlacements vacant set}}{\underset{ \eqref{eq: capacity bound}}{\leq}}\exp\big(-c\varepsilon r^{d-2}\big), 
    \end{equation}
    which directly implies that $r_0^{\varepsilon}$ has finite moments. 
    By the translation invariance of the random walk under $\P$ and a union bound, we get
    \begin{equation}
    \label{eq: domination of the distance}
        \P(V_1^c) \leq
        \err_1 \overset{\text{def.}}{=}
        N^d\cdot\P\big(\bs{r}_{\bs{0}}^{2\varepsilon}> l_N\big) 
        \xrightarrow{\eqref{eq: distance to RW to RI}, \eqref{eq: tail bound of the distance to RI}} 0, \text{ as } N\to\infty.
    \end{equation}
    
    We now assume that $\F \in \G_{\g,\varepsilon}$. By the definition of the auxiliary graph (see below \eqref{eq: late points in the bulk and edge typical}), for large $N$, $\R(\F_{\edge})$ is $\frac{35}{12}l_N$-well separated. 
    We then use Proposition \ref{prop: RW/RI to independent RI coupling} picking $\delta, R, \F, u, \rho$ therein as $1/6, 5l_N/2, \R\big(\F_{\edge}\big), 2\varepsilon, 1/2$. 
    Let $\WP$ stand for the extended probability measure and $\big(\I^{(x),\varepsilon}\big)_{\x \in \R(\F_{\edge})}$ for the constructed independent random interlacements with the same law as $\I^{\varepsilon}$. We now define the event
    \begin{equation*}
        V_3 = \left\{\I^{(x),\varepsilon}\cap Q(x, 5l_N/2) \subset X\big[0,2\varepsilon N^d\big]\cap Q(\x, 5l_N/2) \text{ for all } \x\in \R\big(\F_{\edge}\big)\right\}.
    \end{equation*}
    By \eqref{eq: RW to independent RI}, 
    \begin{equation}
    \label{eq: RI contained in RW in small boxes}
        \WP(V_3^c) \leq \err_2 \overset{\text{def.}}{=} C\varepsilon N^{4d}\exp\left(-c\sqrt{\varepsilon\cdot l_N^{d-2}}\right)\to 0, \text{ as }N\to\infty. 
    \end{equation}
    We write $r^{\varepsilon}(x)=d_\infty\left(x, \I^{(x),\varepsilon}\right)$ for $\x \in \R\big(\F_{\edge}\big)$. It is clear that 
    \begin{equation}
    \label{eq: distances to RI are i.i.d.}
        \text{$r^{\varepsilon}(x)$, $\x\in \R\big(\F_{\edge}\big)$, are i.i.d.\ with the same law as $r_0^{\varepsilon}$.}
    \end{equation}
    Let $V_4=\left\{\bs{r}_{\x}^{2\varepsilon}\leq r^{\varepsilon}(x) \text{ for all } \x\in \R\big(\F_{\edge}\big)\right\}$. 
    Noting that $V_1\cap V_3$ implies $V_4$ for large $N$, we have
    \begin{equation}
    \label{eq: E_4 high prob}
        \WP(V_4^c)\overset{\eqref{eq: domination of the distance}}{\underset{\eqref{eq: RI contained in RW in small boxes}}{\leq}} \err_1+\err_2. 
    \end{equation}
    Moreover, on $V_4$,
    \begin{equation}
    \label{eq: total distances to RW bounded by to RI}
        \sum_{\x\in \R(\F_{\edge})}\bs{r}_{\x}^{2\varepsilon}\leq 
        \sum_{\x\in \R(\F_{\edge})}r^{\varepsilon}(x). 
    \end{equation}

    Since $\F \in \G_{\g, \varepsilon}$ (see below \eqref{eq:nice} for the definition) and $|H_\delta|\sim \big(1-(1-\delta)^d\big) N^d\sim d\delta N^d$ as $N\to\infty$ and $\delta\to 0$, we have the following bounds on $\big|\F_{\edge}\big|$ and $\big|\F_{\edge}\big|-\big|\R\big(\F_{\edge}\big)\big|$:
    \begin{equation*}
        c(K)d\delta\cdot N^{d(1-\gamma)} \overset{\eqref{eq: def: parameters}}{\leq}
        \frac{1}{2}|H_\delta|N^{-d\g} \leq
        \big|\F_{\edge}\big| \leq 
        \frac{3}{2}|H_\delta|N^{-d\g} \overset{\eqref{eq: def: parameters}}{\leq}
        C(K)d\delta\cdot N^{d(1-\gamma)}
    \end{equation*}
    and 
    \begin{equation}
    \label{eq: small number of badly separated late points}
        \big|\F_{\edge}\big|-\big|\R\big(\F_{\edge}\big)\big|\leq 
        N^{d(1-c_3\cdot \g)}=C(K)N^{d(1-c_3\gamma)}, 
    \end{equation}
    where $c_3>1$ (see \eqref{eq: def: c_3,4} for definition), which imply
    \begin{equation}
    \label{eq: large number of well separated late points}
        c(K,\delta)N^{d(1-\gamma)}\leq\big|\R\big(\F_{\edge}\big)\big|\leq C(K)d\delta\cdot N^{d(1-\gamma)}.
    \end{equation}
    Since $r_0^{\varepsilon}$ has finite moments (see below \eqref{eq: tail bound of the distance to RI}), by \eqref{eq: distances to RI are i.i.d.}, \eqref{eq: large number of well separated late points} and Chebyshev's inequality, we have
    \begin{equation}
    \label{eq: total distances to RI}
        \WP\Big(\sum_{\x\in \R(\F_{\edge})}r^{\varepsilon}(x)> 2\big|\R\big(\F_{\edge}\big)\big|\cdot \mathbb{E}r_0^{\varepsilon}\Big) \leq\frac{C(\varepsilon)}{\big|\R\big(\F_{\edge}\big)\big|}
         \leq
        \err_3\overset{\text{def.}}{=}C(K,\varepsilon,\delta)N^{-d(1-\gamma)}. 
    \end{equation}
    Recall $V_2(F,M)$ defined in \eqref{eq: def: E_1,2} and the fact that
    $$\mdist'(\F_{\edge}, 2\varepsilon)
    =\sum_{\x\in \R(\F_{\edge})}\big(\bs{r}_{\x}^{2\varepsilon}+2\big)
    +3l_N\cdot\big(\big|\F_{\edge}\big|-\big|\R\big(\F_{\edge}\big)\big|\big).$$ 
    The claims \eqref{eq: total distances to RW bounded by to RI}, \eqref{eq: small number of badly separated late points}, \eqref{eq: large number of well separated late points} and \eqref{eq: total distances to RI} together imply that there exists $M=M(K,\varepsilon)>0$ such that
    \begin{equation*}
        \WP[V_2(\F,M)]\geq 
        1-\left(\WP(V_4^c)+\err_3\right) \overset{\eqref{eq: E_4 high prob}}{\geq}
        1-\textstyle\sum_{i=1}^3 \err_i \to 1, \text{ as } N\to\infty, 
    \end{equation*}
    where the bound $1-\sum_{i=1}^3 \err_i$ does not depend on the choice of $\F \in \G_{\g,\varepsilon}$. 
    We conclude by combining this bound with \eqref{eq: domination of the distance}.
\end{proof}

\subsection{Sketch of the proof of Theorem \ref{thm: rough lower bound}}
\label{sec: proof of the rough lower bound}
We explain in this subsection how to modify the arguments in Sections \ref{sec: proof of the sharp lower bound} and \ref{sec: proof of the lower bound on event E prob} to prove Theorem \ref{thm: rough lower bound}. 
Note that in this subsection $\gamma\in(0,1)$, which is different from the range of $\gamma$ in Theorem \ref{thm: large deviation}. 
Recall also the requirements on $\varepsilon$ and $\beta_N$ in \eqref{eq: def: parameters}. Here we simply take $\varepsilon=1$. We define an analogous version of the event $E$ (as defined in \eqref{eq: def: event E}):
\begin{equation}\label{eq: def: event E for rlb}
    E'=E'(M')=\bigg\{
    \begin{array}{c}
    \L^{\beta_N} \text{ is $(1,1/2)$-nice}, \\
    \bs{r}_x\leq l_N \text{ for all } x\in\T_N \text{\quad and \quad}
    \mdist(\L^{\ai})\leq 
    M'\cdot N^{d(1-\gamma)}
    \end{array}
    \bigg\}.
\end{equation}
We can imitate the proof of Lemma \ref{lem: prob: late points nice} and Proposition \ref{prop: bounds on distances to random walk}, and obtain the following result. We omit the proof.  
\begin{lemma}
\label{lem: event E' prob}
    There exists a constant $M'>0$ such that 
    \begin{equation*}
    \label{eq: event E' prob}
        \lim_{N \to \infty}\P(E')=1.
    \end{equation*}
\end{lemma}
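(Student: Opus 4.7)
The plan is to decompose the event $E'$ into its three constituent requirements and bound each one by adapting arguments already developed in Sections~\ref{sec: proof of the sharp lower bound} and~\ref{sec: proof of the lower bound on event E prob}. Since the strategy for Theorem~\ref{thm: rough lower bound} skips Stage~2, we effectively work with $\delta=1$, so the bulk $\BQ_1$ reduces to essentially a single point, the edge $\HH_1$ is essentially the full torus $\T_N$, and the $(\BQ_1,\varepsilon)$-regularity requirement becomes vacuous.

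First, to show that $\L^{\beta_N}$ is $(1,1/2)$-nice with probability tending to~$1$, I would reuse the proof of Lemma~\ref{lem: prob: late points nice} essentially verbatim, with $\beta_N$ in place of $\g$. The $(\HH_1,1/2)$-regularity follows from Lemma~\ref{lem: late points concentration} applied with $\alpha=\beta_N$ (which stays bounded below from $0$ and above from $1$) and $\lambda\to 1$, $\upsilon=1/2$. For the separation condition~\eqref{eq: late points in the edge separated}, the same second-moment computation based on~\eqref{eq: prob: late points} together with~\eqref{eq: green function comparison} yields
\[
  \E\bigl[\big|E\big(\L^{\beta_N}\big)\big|\bigr] \le C N^{d(1-c_4 \beta_N)}\,l_N^d,
\]
which is of strictly smaller order than $N^{d(1-c_3\beta_N)}$ since $c_3<c_4$, and Markov's inequality finishes this piece.

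Second, to control the two distance requirements involving $\bs{r}_x$, I would imitate the proof of Proposition~\ref{prop: bounds on distances to random walk} via Proposition~\ref{prop: uniform case: bounds on distances to random walk}. The uniform bound $\bs{r}_x\le l_N$ for all $x\in\T_N$ is obtained by applying Proposition~\ref{prop: macroscopic coupling} (plus the standard mixing argument to handle an arbitrary starting point) to couple the relevant trace of the walk to $\I^{\varepsilon}$ with $\varepsilon$ small, then using the tail estimate~\eqref{eq: tail bound of the distance to RI} together with a union bound over the $N^d$ sites. For the modified total distance, on the niceness event the ``non-representative'' contribution $3l_N(|\L^{\beta_N}|-|\R(\L^{\beta_N})|)$ is at most $3l_N\cdot N^{d(1-c_3\beta_N)}=o(N^{d(1-\gamma)})$. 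For the ``representative'' contribution $\sum_{x\in\R(\L^{\beta_N})}(\bs{r}_x+2)$, note that representatives are $3l_N$-separated, hence well separated in the sense of Proposition~\ref{prop: RW/RI to independent RI coupling}; applying that coupling constructs i.i.d.\ random variables $r^\varepsilon(x)\stackrel{d}{=}r_0^\varepsilon$ which dominate $\bs{r}_x$ with high probability. Since $\E r_0^\varepsilon<\infty$ and $|\R(\L^{\beta_N})|\le|\L^{\beta_N}|\le CN^{d(1-\gamma)}$, Chebyshev's inequality (as in~\eqref{eq: total distances to RI}) shows that for an appropriate $M'=M'(\E r_0^\varepsilon)$, the sum is at most $\frac{M'}{2}N^{d(1-\gamma)}$ with probability tending to $1$.

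The only delicate bookkeeping is that the definition of $\bs{r}_x$ in~\eqref{eq: def: distance to random walk between T_2 and T_3} refers to the window $(T_2,T_3]$, which must be reinterpreted in the ``skip Stage~2'' strategy: one sets $T_2$ equal to (or close to) $T_1$, giving the walk an interval of length of order $N^d$ between Stages~1 and~3 to ensure good coverage properties, and then the macroscopic and mesoscopic couplings apply verbatim on this interval. Once the time variables are so adjusted, no new idea beyond Lemma~\ref{lem: prob: late points nice} and Proposition~\ref{prop: bounds on distances to random walk} is required, which is precisely why the authors omit the proof.
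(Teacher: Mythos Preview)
Your proposal is correct and follows exactly the approach the paper indicates (the paper omits the proof, stating only that one imitates Lemma~\ref{lem: prob: late points nice} and Proposition~\ref{prop: bounds on distances to random walk}). Two small bookkeeping corrections: with $\delta=1$ the bulk $\BQ_1=Q(\bs{0},0)$ is in fact empty (not a single point), so the $(\BQ_1,1/2)$-regularity is genuinely vacuous; and in Section~\ref{sec: proof of the rough lower bound} the paper fixes $\varepsilon=1$, so $T_2=\lfloor\gamma t_{\cov}-5N^d\rfloor$ and $T_3=\lfloor\gamma t_{\cov}-N^d\rfloor$ are already well defined with $T_3-T_2\approx 4N^d$, and there is no separate $T_1$ to reconcile with---$T_2$ simply plays the role that $T_1$ played in the four-stage strategy.
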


We now sketch the proof of Theorem \ref{thm: rough lower bound}. 

\begin{proof}[Sketch of the proof of Theorem \ref{thm: rough lower bound}]
    By Lemma \ref{lem: event E' prob}, we can take $M'>0$ such that $\P(E') \geq \frac{1}{2}$ for large $N$.
    Similar to \eqref{eq: E good}, we can prove
    \begin{equation*}
        E' \text{ is $(\T_N,1,M')$-good}.
    \end{equation*}
    We now take $F=\T_N$, $M_0=M'$ and $N$ large enough such that \eqref{cost of inserting loops} holds. 
    Applying Proposition \ref{prop: loop insertion ineq} with $A=E'$, we obtain that for sufficiently large $N$, 
    \begin{equation*}
        \P(U_{\gamma,N})\geq \frac{1}{2}\exp\big(-C'M'\cdot N^{d(1-\gamma)}\big).
    \end{equation*}
    We hence conclude by taking some $C>C'M'$.
\end{proof}

\section{Covering via loop insertion}
\label{sec: Construction of the mapping Psi and its properties}
In this section, we give the proof of Proposition \ref{prop: loop insertion ineq}. Following the approach outlined in Section \ref{sec: outline and discussion}, for each path $\omega$ in the $(\F,\varepsilon,M_0)$-good path collection $\CA$, we construct in Proposition \ref{prop: Psi properties} a path collection $\Psi(\omega)$ such that each path in $\Psi(\omega)$ has length $T_4=\lfloor \gamma t_{\cov} \rfloor$ and visits every vertex in $\T_N$ at least once (see \eqref{eq: covering all vertices Psi} below). 
Consequently, we have $\bigcup_{\omega \in \CA}\Psi(\omega) \subset \mathcal{U}_{\gamma,N} \overset{\mathrm{def.}}{=} \pi_{T_4}(U_{\gamma,N})$ (see \eqref{eq: def: downward large deviation} and \eqref{eq: paths correspond to event} for the definitions of $U_{\gamma, N}$ and $\pi_k(\cdot)$).
To establish a lower bound for $|\bigcup_{\omega \in \CA}\Psi(\omega)|$, we also require two key conditions:
\begin{enumerate}
    \item $\Psi(\omega)$ contains sufficiently many paths for any $\omega \in \CA$;
    \item $(\Psi(\omega))_{\omega \in \CA}$ are mutually disjoint.
\end{enumerate}
These requirements correspond precisely to \eqref{eq: enough covering paths Psi} and \eqref{eq: injective mapping Psi} in the following proposition (recall $T_3=\lfloor \gamma t_{\cov}-\varepsilon N^d \rfloor$). 

Recall that $\Omega_k$ stand for the collection of random walk paths of length $k$; see below \eqref{eq: random walk paths}. For any $\omega \in \Omega_k$ and $0 \leq t_1 \leq t_2 \leq k$, let $\omega[t_1,t_2]=\{\omega(j):  t_1 \leq j \leq t_2 \}$, where $\omega(j)$ is the $(j+1)$-th element in $\omega$. The sub-path
$\omega(t_1,t_2]$ can be defined similarly.

\begin{prop}
\label{prop: Psi properties} 
    For any $\F \subset \T_N$ and $M_0>0$ such that 
    \eqref{eq: cost of inserting loops not large} holds, and
    for any $(\F,\varepsilon,M_0)$-good path collection $\CA$, there exists a mapping $\Psi$ from $\CA$ to $2^{\Omega_{T_4}}$ such that:
    \begin{subequations}
    \label{eq: Psi properties}
        \begin{align}
            & \text{For any } \omega \in \CA \text{ and } \widetilde{\omega} \in \Psi(\omega) \text{, we have } \T_N \subset \widetilde{\omega}[0,T_4] \label{eq: covering all vertices Psi};\\ 
            &\text{For any } \omega \in \CA\text{, we have }|\Psi(\omega)| \geq (2d)^{T_4-T_3 - J(\F,M_0)};   \label{eq: enough covering paths Psi}\\
            &\text{For any } \omega,\omega' \in \CA \text{ and } \omega \neq \omega' \text{, we have }\Psi(\omega) \cap \Psi(\omega') = \emptyset \label{eq: injective mapping Psi},
        \end{align}
    \end{subequations}
    where $J(\F,M_0)$ is defined in \eqref{cost of inserting loops}.
\end{prop}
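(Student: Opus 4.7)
The plan is to realize explicitly the loop-insertion surgery outlined in Section~0.1, Stage~3. Given $\omega\in\CA$, we will construct a single modified trajectory $\omega^\star$ of length $T_3+L$ that covers $\T_N$, and then set $\Psi(\omega)$ to be the collection of all nearest-neighbor extensions of $\omega^\star$ to length $T_4$; there are $(2d)^{T_4-T_3-L}$ such extensions. The bound $L\leq J(\F,M_0)$ that we shall establish, together with \eqref{eq: cost of inserting loops not large}, ensures $T_3+L<T_4$, so \eqref{eq: enough covering paths Psi} follows immediately. The covering condition \eqref{eq: covering all vertices Psi} is also automatic, because $\T_N\setminus\F\subset\omega[0,T_3]$ by \eqref{covering complement of F}, every non-late point of $\F$ lies in $\omega[0,T_2]\subset\omega[0,T_3]$ by the definition of $\ai$-lateness, and the inserted loops will visit every vertex of $\L_{\F}^{\ai}(\omega)$.

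The construction of $\omega^\star$ uses the auxiliary graph on $F\overset{\text{def.}}{=}\L_{\F}^{\ai}(\omega)$, yielding representatives $\R(F)$ and their clusters. \textbf{Step A:} for each $x\in\R(F)$, pick the closest point $y_x\in\omega(T_2,T_3]$, so that $d_\infty(x,y_x)=\bs{r}_x(\omega)\leq l_N$ by \eqref{no too large late island}, and splice into $\omega$ at the first passage through $y_x$ a self-avoiding loop based at $y_x$ that visits $x$, of length at most $2(\bs{r}_x(\omega)+2)$. \textbf{Step B:} for each non-representative $z$ in the cluster of $x$, splice an additional short self-avoiding loop based at $x$ visiting $z$, of length at most $6l_N$ (since $d_\infty(x,z)\leq 3l_N$ by the definition of the auxiliary graph \eqref{eq:lNdef}). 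Summing, the total inserted length satisfies $L\leq 2\,\mdist(F)\leq 2d\cdot\mdist(F)\leq J(\F,M_0)$ thanks to \eqref{no too much cost for extra loops}.

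The genuine difficulties lie in (i) producing the spliced loops so that they are pairwise vertex-disjoint (apart from the common base points) and each is internally self-avoiding, and (ii) verifying the injectivity \eqref{eq: injective mapping Psi}. For (i) we will exploit that $\R(F)$ is well separated at scale $3l_N$ in the auxiliary graph and that each Step~A loop can be confined to a ball of radius $2l_N$ around its target $x\in\R(F)$, while each Step~B loop can be confined to a $3l_N$-ball around its representative; the fact that $d\geq 3$ leaves enough lattice room to draw all these loops disjointly, and an explicit combinatorial construction will be provided. The main obstacle is (ii): we must exhibit a deterministic decoding $\widetilde\omega\mapsto\omega$ that, from the modified-and-extended path alone, recognizes and excises each spliced loop together with the free tail. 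The guiding idea is that each Step~A loop appears in $\widetilde\omega$ as the canonical first self-avoiding excursion from its base point $y_x$ into a prescribed localized neighborhood of a late-point cluster, and each Step~B loop is similarly pinned down inside its cluster's ball; pairwise disjointness then allows the loops to be stripped off unambiguously, recovering $\omega$. Making this decoding work uniformly over every $\omega\in\CA$ is the heart of the argument, and it is what forces $\Psi(\omega)\cap\Psi(\omega')=\emptyset$ whenever $\omega\neq\omega'$.
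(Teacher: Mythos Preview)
Your plan mirrors the paper's two-step surgery, but two points need repair.

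In Step~B you claim $d_\infty(x,z)\leq 3l_N$ for every non-representative $z$ in the cluster of $x$; this is false. The auxiliary graph \eqref{eq:lNdef} puts an edge when $d_\infty\leq 3l_N$, so a connected component of size $k$ can have $\ell^\infty$-diameter up to $3(k-1)l_N$, and separate loops from $x$ to each $z$ would in general cost order $dl_N|\sfC|^2$ rather than $dl_N(|\sfC|-1)$, breaking the bound $L\leq 2d\,\mdist(F)$. The paper instead inserts \emph{one} loop $\beta^{\Tree}(\sfC,x)$ per cluster: a DFS traversal of a spanning tree of $G(\sfC)$, with each tree-edge realized as a shortest lattice path; this costs at most $6dl_N(|\sfC|-1)$ and matches the $3l_N(|F|-|\R(F)|)$ budget in $\mdist$. (Your Step~A length $2(\bs r_x+2)$ is also off by a factor of $d$, since a lattice path from $y_x$ to $x$ needs $|x-y_x|_1\leq d\,\bs r_x$ steps; the correct bound is $2d(\bs r_x+2)$. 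Since you concede $L\leq 2d\,\mdist(F)$ anyway this slip is harmless for \eqref{eq: enough covering paths Psi}.)

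For injectivity, your sketch (``canonical first self-avoiding excursion from $y_x$'') cannot work as stated: the base point $y_x=\near(\omega,x)$ depends on $\omega(T_2,T_3]$, which is precisely the part that has been altered, so you cannot locate $y_x$ directly in $\widetilde\omega$. The paper's key idea is to anchor the decoding at the \emph{late point} $x$, which \emph{is} determined by $\widetilde\omega[0,T_2]=\omega[0,T_2]$. The loop $\beta(x,y_x)$ is designed so that the three steps of $\widetilde\omega$ around its first post-$T_2$ visit to $x$ reveal which of four ``types'' the vector $y_x-x$ falls into (Figure~\ref{fig: loops inserted in the first step}); within each type the loop is self-avoiding with a recognizable shape that lets one read off its full extent, and hence $y_x$, from $\widetilde\omega$ alone (Lemmas~\ref{lem: identify the case Psi}--\ref{lem: appch give the loop 22}). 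The tree-loops are easier since they depend only on $\L_{\F}^{\ai}(\omega)=\L_{\F}^{\ai}(\widetilde\omega)$. Without such an explicit, invertible design of the Step~A loops, \eqref{eq: injective mapping Psi} does not follow.
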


\begin{proof}[Proof of Proposition \ref{prop: loop insertion ineq} 
 assuming Proposition \ref{prop: Psi properties}]
    Since $A$ is an $(\F,\varepsilon,M_0)$-good event, the path collection $\CA \overset{\mathrm{def.}}{=}\pi_{T_3}(A)$ is $(\F,\varepsilon,M_0)$-good.
    Then, we can apply Proposition \ref{prop: Psi properties} and obtain a mapping $\Psi$ satisfying properties \eqref{eq: covering all vertices Psi}, \eqref{eq: enough covering paths Psi} and \eqref{eq: injective mapping Psi}.
    By \eqref{eq: covering all vertices Psi}, we have $\Psi(\omega) \subset \CU_{\gamma,N}$ for any $\omega \in \CA$. 
    Therefore, 
    \begin{equation}
    \label{eq: lower bound for covering paths}
        |\,\CU_{\gamma,N}| \geq \big| \bigcup_{\omega \in \CA} \Psi(\omega) \big|
        \overset{\eqref{eq: injective mapping Psi}}{=} \sum_{\omega \in \CA} |\Psi(\omega)|
        \overset{\eqref{eq: enough covering paths Psi}}{\geq}|\CA| \cdot (2d)^{T_4-T_3- J(\F,M_0)}.
    \end{equation}
    Note that $|\Omega_{k}|=(2d)^{k}\cdot |\T_N|$. 
    Therefore, by the definitions of $\CU_{\gamma,N}$ and $\CA$, we obtain
    \begin{equation}
    \label{eq: above loop insertion description}
        \frac{\P(U_{\gamma,N})}{\P(A)}\overset{\eqref{eq: paths correspond to event}}{=} \frac{|\CU_{\gamma,N}|/\left((2d)^{T_4}\cdot |\T_N| \right)}{|\CA|/\left((2d)^{T_3}\cdot |\T_N|\right)} \overset{\eqref{eq: lower bound for covering paths}}{\geq} (2d)^{-J(\F,M_0)}, 
    \end{equation}
    which implies \eqref{eq: loop insertion ineq}.
\end{proof}

The remainder of this section is devoted to the proof of Proposition \ref{prop: Psi properties}, which roughly corresponds to \textbf{Stage 3} and \textbf{Stage 4} of the four-stage strategy. 

Recall the definition of the auxiliary graph $G(\cdot)$ and the subset $\R(\cdot)$ below \eqref{eq: late points in the bulk and edge typical}.
Let $\omega \in \CA$. 
The ``surgeries'' performed on the path $\omega$ at \textbf{Stage 3} proceed as follows. 
First, for each vertex $\x\in \R(\L_{\F}^{\ai}(\omega))$, we insert a loop (which we refer to as $\beta(\x,\y)$) rooted at $\y$, the vertex closest\footnote{Recall that all points in $\T_N$ are ordered (see below \eqref{eq:lNdef}), thus allowing us to break ties by choosing the vertex with the smallest label. \label{ft:1}} to $\x$ in the trace of random walk between $T_2$ and $T_3$, to cover $\x$. 
This ensures that each component $\sfC$ in $\C(\L_{\F}^{\ai}(\omega))$ is connected to the path $\omega$.
Second, we insert loops $\beta^{\mathrm{Tree}}(\mathsf{C},\x)$ to cover the remaining late points $\L_{\F}^{\ai}(\omega) \setminus \R(\L_{\F}^{\ai}(\omega))$ ``cluster by cluster".
Specifically, the loop $\beta^{\mathrm{Tree}}(\mathsf{C},\x)$ starts at a vertex $\x$ in $\mathsf{C}$ and visits all vertices in $\mathsf{C}$, where $\mathsf{C}$ is a subset of $\T_N$ such that its auxiliary graph $G(\mathsf{C})$ is connected.
These two steps constitute the strategy at \textbf{Stage 3}. 

Let us remark that the main purpose of introducing two types of loops 
is to ensure that the loops corresponding to different connected components of the auxiliary graph do not intersect each other, which is a crucial property used in the proof of \eqref{eq: injective mapping Psi} where we recover the original path from the path after loop insertion; see the proof of Lemmas \ref{lem: inherit local information} and \ref{lem: final prep for recover first step} for reference.

At \textbf{Stage 4}, we allow the path after the insertion of loops to roam freely up to length $T_4$. All such paths are collected into the set $\Psi(\omega)$; see \eqref{eq: def Psi} for reference.

We now turn to the claims of Proposition \ref{prop: Psi properties}. The first claim \eqref{eq: covering all vertices Psi} follows directly from the fact that all late points are covered by the inserted loops. 

For \eqref{eq: enough covering paths Psi}, the total length of the inserted loops is smaller than the LHS of \eqref{no too much cost for extra loops}.
Specifically, the length of each loop $\beta(\x,\y)$ is bounded by a constant multiple of $d_\infty(\x,\y)\vee 1$ and the length of each $\beta^{\mathrm{Tree}}(\mathsf{C},\x)$ loop is bounded by $6l_N(|\mathsf{C}|-1)$.
Since $\CA$ is $(\F,\varepsilon,M_0)$-good, this ensures that $\Psi$ satisfies \eqref{eq: enough covering paths Psi}.

Finally, for \eqref{eq: injective mapping Psi}, we demonstrate that the original path $\omega$ can be recovered from $\wt{\omega} \in \Psi(\omega)$.
Specifically, loops $\beta(\x,\y)$ are inserted the first time the path hits $\y$ after $T_2$, and $\beta^{\mathrm{Tree}}(\mathsf{C},\x)$ are inserted the first time the path hits $\x$ after $T_2$, where we recall that $T_2=\lfloor \ai t_{\cov} \rfloor=\lfloor \gamma t_{\cov}-5\varepsilon N^d\rfloor$, so the locations of the inserted loops in $\widetilde{\omega}$ can be deduced from $\widetilde{\omega}$ and deleted to obtain $\omega$.
For the loops $\beta^{\mathrm{Tree}}(\mathsf{C},\x)$, they are determined by the first $T_2$ steps of $\omega$, which are identical to those of $\widetilde{\omega}$.
For the loops $\beta(\x,\y)$, our construction ensures that $\beta(\x,\y)$ is self-avoiding (except at the starting point), which enables us to deduce $\beta(\x,\y)$ from $\widetilde{\omega}$; see Section \ref{sec: proof of proposition rfs} for details.

We conclude this part with the organization of this section.
In Section \ref{sec: Construction of loops to be inserted}, we construct the loops $\beta(\x,\y)$ and $\beta^{\mathrm{Tree}}(\sfC,\x)$ and demonstrate their key properties.
In Section \ref{sec: Loop insertion subsec}, we construct the mapping $\Psi(\cdot)$ by inserting loops defined in Section \ref{sec: Construction of loops to be inserted} and then extending the path to length $T_4$.
In Section \ref{sec: proof of psi props}, we prove that the mapping $\Psi$ possesses the properties specified in \eqref{eq: Psi properties}.
The proof of Proposition \ref{prop: recover first step}, which concerns the removal of loops $\beta(\x,\y)$, is more intricate and is provided in Section \ref{sec: proof of proposition rfs}.

\subsection{Construction of loops}
\label{sec: Construction of loops to be inserted}

We begin with the definition of loops. For each pair of vertices $\x, \y \in \T_N$, 
we write $\x=(x_1,x_2,\ldots,x_d)$ and $\y=(y_1,y_2,\ldots,y_d)$,
where $d$ is the dimension of the torus.
Define the difference vector $\bs{\Delta}=\y-\x=(\Delta_1,\Delta_2,\ldots,\Delta_d)$.
Since $\bs{\Delta}$ can be viewed as a vector in $(\Z/N\Z)^d$, we can assume without loss of generality that $| \Delta_i | \leq \frac{N+1}{2}$ for $1 \leq i \leq d$.
Thus, $d_{\infty}(\x,\y)=\max_{1 \leq i \leq d}|\Delta_i|$.

We categorize the vector $\bs{\Delta}=\y-\x$ into four {\bf types} based on the values of its components viewed as a vector in $(\Z/N\Z)^d$.
\begin{enumerate}[label=(\alph*)]
    \item There exists $1 \leq i < j \leq d$ such that $\Delta_i \neq 0$ and $\Delta_j \neq 0$.
    \item There exists $1 \leq i \leq d-1$ such that $\Delta_i \neq 0$, and for any $j \neq i$, $\Delta_j =0$.
    \item $\Delta_d \neq 0$, and for any $1 \leq i \leq d-1$, $\Delta_i=0$.
    \item $\bs{\Delta}=\bs{0}$.
\end{enumerate}
We also specify the type of each vertex $\x\in \T_N$ as the type of the vector $\bs{\Delta}=\near(\omega,\x)-\x$, where $\near(\omega,\x)$ refers to the vertex in the path segment $\omega(T_2,T_3]$  closest\footnote{See Footnote \ref{ft:1}.} to $\x$. 

We now construct the first kind of loops $\beta(\x,\y)$ that start and end at $\y$, according to the type of $\bs{\Delta}$.  
For types (a), (b) and (c), the loop consists of two parts: the $\y \to \x$ part and the $\x \to \y$ part.
For clarity, we include a figure (Figure \ref{fig: loops inserted in the first step}) that illustrates the loops we construct in each type.

\begin{figure}[tb!]
    \centering
    \subfigure[$\Delta_i \neq 0$ and $\Delta_j \neq 0$, where $1 \leq i<j \leq d$.]{
        \includegraphics[width=0.45\linewidth]{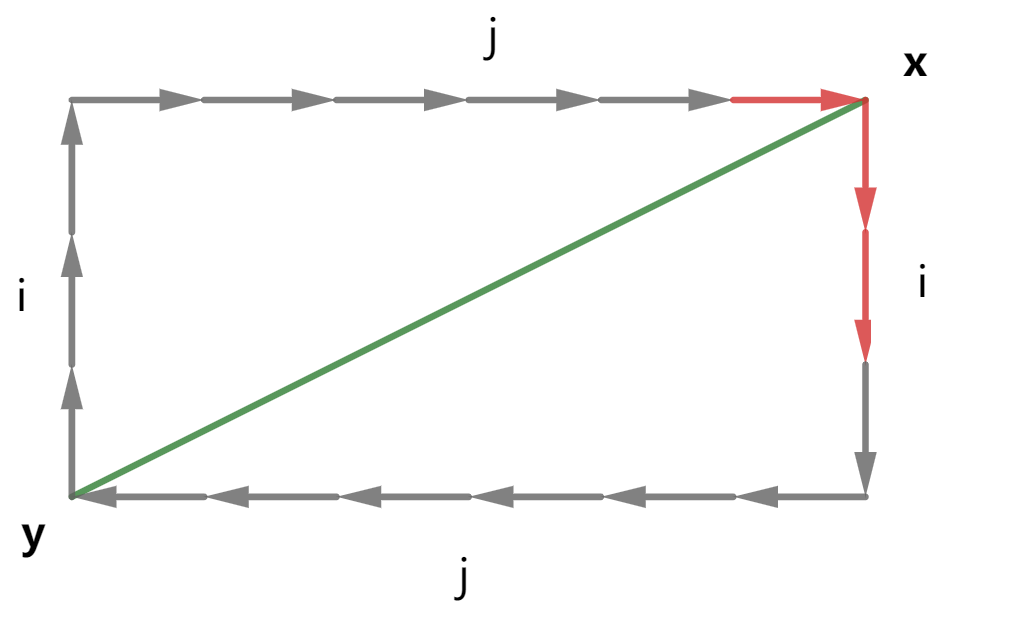}
        \label{Case 1 image}
    }
    \quad
    \subfigure[$\Delta_i \neq 0$ for some $i <d$. $\Delta_j =0$ for any $j \neq i$.]{
        \includegraphics[width=0.45\linewidth]{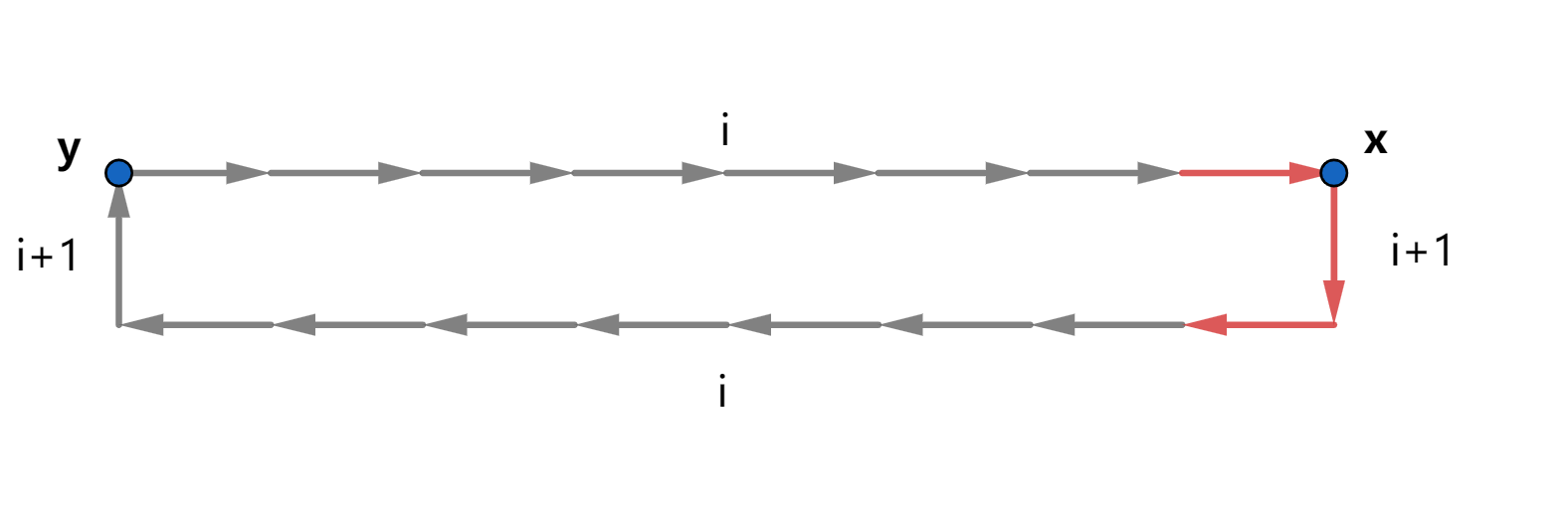}
        \label{Case 2 image}
    }
    \\
    \subfigure[$\Delta_d \neq 0$. $\Delta_j =0$ for any $j \neq d$]{
        \includegraphics[width=0.45\linewidth]{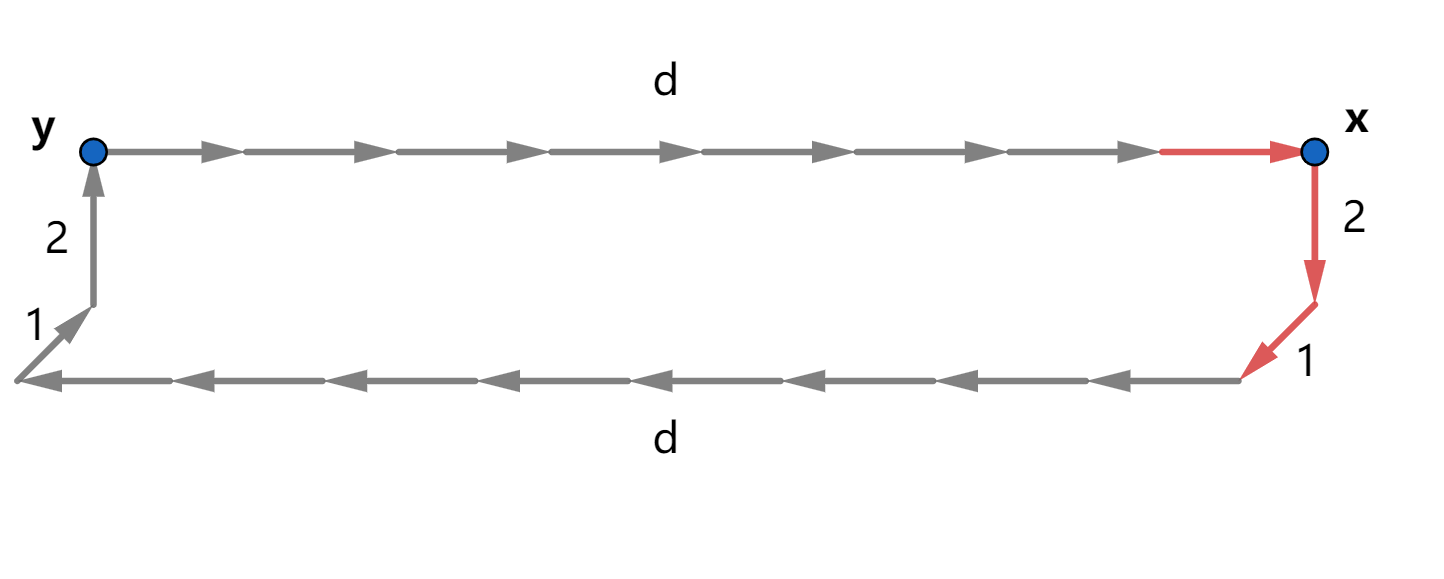}
        \label{Case 3 image}
    }
    \quad
    \subfigure[$\bs{\Delta}=\bs{0}$]{
        \includegraphics[width=0.45\linewidth]{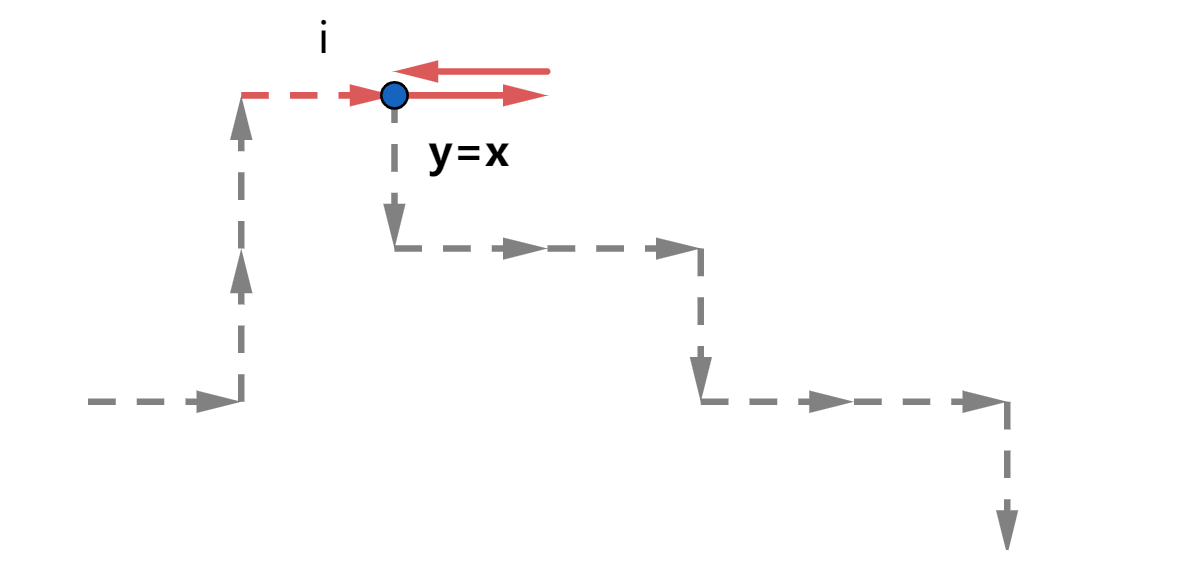}
        \label{Case 4 image}
    }
    \caption{The loops $\beta(\x,\y)$. A solid short arrow represents a single step of the loop. A number $i$ beside the arrows indicates that this step moves in the $i$-th coordinate. To emphasize the \emph{local} picture at $\x$ for each type, we highlight specific regions with red color. The local structure at $x$ plays a critical role in the proof of Proposition \ref{prop: Psi properties}.} 
    \label{fig: loops inserted in the first step}
    \smallskip
\end{figure}

\vspace{10px}

If the type of the vector $\bs{\Delta}=\y-\x$ is (a), we construct $\beta=\beta(\x,\y)$ as follows. We refer readers to Figure \ref{fig: loops inserted in the first step}\subref{Case 1 image} for reference. 
\begin{itemize}
    \item Let $\beta(0)=\y$.
    \item $\y \to \x$: For the $k$-th step, if the path $\beta$ has not visited $\x$ in the first $(k-1)$ steps, let $i$ be the minimal index such that $\beta(k-1)_i \neq x_i$, and let the $k$-th step $\beta(k)-\beta(k-1)=e_i \cdot \sgn (\Delta_i)$.
    \item $\x \to \y$: For the $k$-th step, if the path $\beta$ has visited $\x$ in the first $(k-1)$ steps and $\beta(k-1) \neq \y$, let $i$ be the minimum index such that $\beta(k)_i \neq y_i$, and let the $k$-th step $\beta(k)-\beta(k-1)=-e_i \cdot \sgn (\Delta_i)$. If $\beta(k) = \y$ for some $k>0$, we end the path here. 
\end{itemize}
Since $d_1(\beta(k+1),\x)=d_1(\beta(k),\x)-1$ before the path visits $\x$ and $d_1(\beta(k+1),\y)=d_1(\beta(k),\y)-1$ after the path visits $\x$, the path $\beta$ is a loop connecting $\y$ and $\x$.

\smallskip

If the type of the vector $\bs{\Delta}=\y-\x$ is (b), suppose that $\Delta_i \neq 0$, where $1 \leq i \leq d-1$.
We construct $\beta=\beta(\x,\y)$ as follows.
 We refer readers to Figure \ref{fig: loops inserted in the first step}\subref{Case 2 image} for reference.
\begin{itemize}
    \item Let $\beta(0)=\y$.
    \item $\y \to \x$: Let the path $\beta$ move in the direction of $e_i \cdot \sgn (\Delta_i)$ for $|\Delta_i|$ steps, and the path $\beta$ visits $\x$.
    \item $\x \to \y$: First, let the path $\beta$ move in the direction of $e_{i+1}$ for one step. Next, let the path $\beta$ move in the direction of $-e_i \cdot \sgn (\Delta_i)$ for $|\Delta_i|$ steps. Finally, let the path $\beta$ move in the direction of $-e_{i+1}$ for one step. 
\end{itemize}

\smallskip

If the type of the vector $\bs{\Delta}=\y-\x$ is (c), we construct $\beta=\beta(\x,\y)$ as follows.
 We refer readers to Figure \ref{fig: loops inserted in the first step}\subref{Case 3 image} for reference.
\begin{itemize}
    \item Let $\beta(0)=\y$.
    \item $\y \to \x$: Let the path $\beta$ move in the direction of $e_d \cdot \sgn (\Delta_d)$ for $|\Delta_d|$ steps, and the path $\beta$ visits $\x$.
    \item $\x \to \y$: First, let the path $\beta$ move in the direction of $e_{2}$ for one step and then move in the direction of $e_{1}$ for one step. Next, let the path $\beta$ move in the direction of $-e_d \cdot \sgn (\Delta_d)$ for $|\Delta_d|$ steps. Finally, let the path $\beta$ move in the direction of $-e_{1}$ for one step and then move in the direction of $-e_{2}$ for one step. 
\end{itemize}

\smallskip

If the type of the vector $\bs{\Delta}=\y-\x$ is (d), we have $\x=\y$. 
We let 
\begin{equation} \label{eq: beta-i}
    \beta_i(\x)= (\x, \x+e_i, \x)
\end{equation}
and choose the loop to be inserted from $\beta_i$, $1 \leq i \leq d$  
according to the path; see the second line of \eqref{eq: loops to be inserted in STEP1 for each vertex}.
 We refer readers to Figure \ref{fig: loops inserted in the first step}\subref{Case 4 image} for reference.

\begin{remark}
\begin{enumerate}[label=(\roman*)]
    \item When the type of $\bs{\Delta}$ is (b) or (c), we introduce extra steps ($\pm e_{i+1}$ for type (b) and $\pm e_1,\pm e_2$ for type (c)) to ensure that the loop $\beta(\x,\y)$ is non-self-intersecting, except at the starting point. 
    Furthermore, these extra steps allow us to deduce the type of $\x$ from $\widetilde{\omega}$, see Lemma \ref{lem: identify the case Psi}.
    \item When the type of $\bs{\Delta}$ is (d), loops in the form of $\beta_i(\x)$ are inserted to ensure that the path after loop insertion does not change direction at $\x$ when first hitting $\x$.
    This construction enables us to deduce the type of $\x$ from $\widetilde{\omega}$; see Lemmas \ref{lem: identify the case Psi} and \ref{lem: inherit local information}.
\end{enumerate}
\end{remark}

\smallskip
\begin{figure}[tb!]
    \centering
    \subfigure[Vertices in the set $\mathsf{C}$. The side-length of the cubes is $l_N$. Vertices are labeled with black numbers.]{
\includegraphics[width=0.45\linewidth]{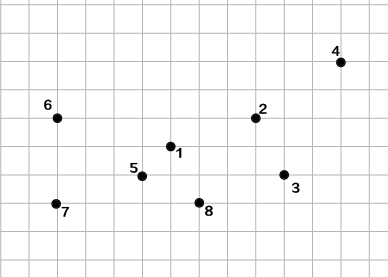}
        \label{STEP 1}
    }
    \quad
    \subfigure[Auxiliary graph $G(\mathsf{C})$ and $\mathsf{Tree}(\mathsf{C})$. Edges of $\mathsf{Tree}(\mathsf{C})$ are represented by solid red lines, while the remaining edges in the graph $G(\mathsf{C})$ are depicted using black dashed lines.]{
        \includegraphics[width=0.45\linewidth]{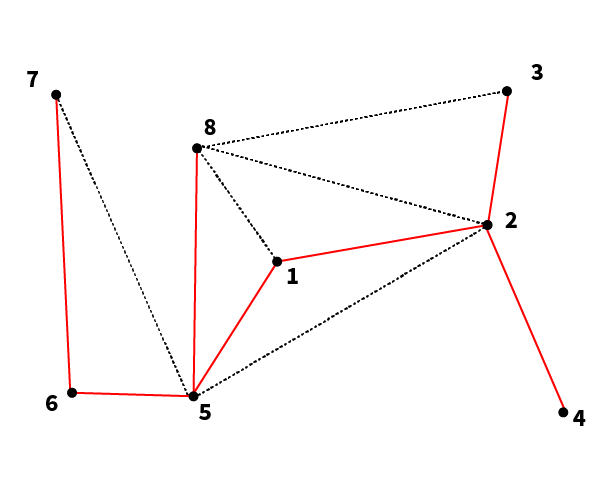}
        \label{STEP 2}
    }
    \\
    \subfigure[Loop $\beta^{\mathrm{Tree},\mathrm{Aux}}(\mathsf{C})$. Steps of loop $\beta^{\mathrm{Tree},\mathrm{Aux}}(\mathsf{C})$ are represented by the blue arrows connecting the two vertices of an edge. The smaller blue numbers next to the arrow indicates which step in the loop this arrow corresponds to.]{
        \includegraphics[width=0.45\linewidth]{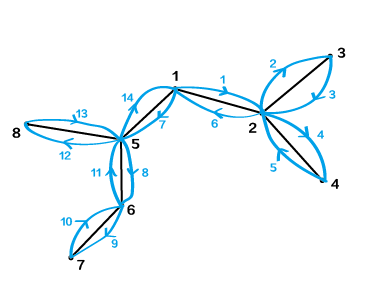}
        \label{STEP 3}
    }
    \quad
    \subfigure[Loop $\beta^{\mathrm{Tree}}(\mathsf{C})$. Steps of $\beta^{\mathrm{Tree}}(\mathsf{C})$ are represented by the blue arrows. The blue arrows next to the number $k$ correspond to the $k$-th step of loop $\beta^{\mathrm{Tree},\mathrm{Aux}}(\mathsf{C})$.]{
        \includegraphics[width=0.45\linewidth]{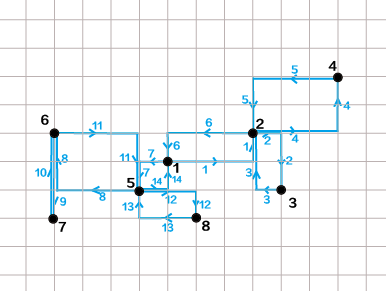}
        \label{STEP 4}
    }
    \caption{Construction of loop $\beta^{\mathrm{Tree}}(\mathsf{C},\x)$. }
    \label{fig: loops inserted in the second step}
\end{figure}

We now turn to the construction of the second kind of loops $\beta^{\mathrm{Tree}}(\sfC,\x)$, where $\x \in \sfC$ and $\sfC$ is a subset of $\T_N$ with a connected auxiliary graph $G(\sfC)$.
The construction proceeds as follows.
First, we find an auxiliary loop $\beta^{\Tree,\,\mathrm{Aux}}(\sfC,\x)$ on the auxiliary graph of $\sfC$, which visits each vertex at least once. 
Then, we connect the vertices in $\sfC$ on $\T_N$ according to the loop $\beta^{\Tree,\,\mathrm{Aux}}(\sfC,\x)$.
We refer readers to Figure \ref{fig: loops inserted in the second step} for reference.

We begin by introducing $\mathsf{Path}(\y,\x)$ to convert one step on the auxiliary graph into part of a path on $\T_N$.
\begin{definition}
    For two distinct vertices $\x\neq\y \in \T_N$, we define $\mathsf{Path}(\y,\x)$ as the $\y \to \x$ part in the construction of loop $\beta(\x,\y)$ when the type of $\bs{\Delta}=\y-\x$ is (a), (b) or (c).
\end{definition}
In fact, in the definition above one may also choose some other shortest path connecting $\x$ and $\y$ using another a priori fixed rule that ensures that the loop $\beta^{\Tree}(\sfC,\x)$ is completely determined by the component $\sfC$ and root $\x$; see the proof of Lemma \ref{lem: recover second step}.
Note that $\mathsf{Path}(\y,\x)$ is a non-self-intersecting path connecting $\y$ and $\x$, with a length no greater than $d|\x-\y|_{\infty}$, and each vertex $\z$ in the path satisfies $|\z-\x|_\infty \leq |\y-\x|_\infty$.

We now define the loop $\beta^{\mathrm{Tree}}(\mathsf{C},\x)$ by first constructing the auxiliary loop $\beta^{\Tree,\,\mathrm{Aux}}(\sfC,\x)$, and then replacing each step $\y\to\z$ in $\beta^{\Tree,\,\mathrm{Aux}}(\sfC,\x)$ with $\mathsf{Path}(\y,\z)$ to obtain a loop on $\T_N$.
To construct the auxiliary loop $\beta^{\Tree,\,\mathrm{Aux}}(\sfC,\x)$, we perform a depth-first-search (DFS) algorithm to explore the graph $G(\sfC)$ and record the order each vertex is visited. 
We refer readers to Figures \ref{fig: loops inserted in the second step}\subref{STEP 2}, \ref{fig: loops inserted in the second step}\subref{STEP 3} and \ref{fig: loops inserted in the second step}\subref{STEP 4} for reference.
\begin{definition} \label{def: loops to be inserted in STEP2}
    For each $\mathsf{C} \subset \T_N$ and $\x \in \sfC$, we apply DFS\footnote{Recall that all points in $\T_N$ are a priori ordered. See also Footnote \ref{ft:1}.} to generate a spanning tree $\Tree(\sfC)$ rooted at $\x$. The sequence of vertices visited during the exploration forms a loop \emph{on the graph} $G(\sfC)$, which is denoted by $\beta^{\Tree,\,\mathrm{Aux}}(\sfC,\x)$.
    We now write $\beta^{\mathrm{Tree},\mathrm{Aux}}(\mathsf{C},\x)=(\z_0,\z_1, \ldots, \z_L)$ and concatenate the paths $\mathsf{Path}(\z_i,\z_{i+1})$, $0 \leq i \leq L-1$ in sequence to obtain a loop \emph{on the graph} $\T_N$, which is denoted by $\beta^{\mathrm{Tree}}(\mathsf{C},\x)$.
\end{definition}
It is elementary that the loop $\beta^{\mathrm{Tree}}(\mathsf{C},\x)$ is completely determined by the subset $\mathsf{C}$ and the vertex $\x$.

\smallskip

We conclude this subsection by stating some key properties of loops $\beta(\x,\y)$ and $\beta^{\mathrm{Tree}}(\mathsf{C},\x)$, which are used in the proof of Proposition \ref{prop: Psi properties}. We omit their proofs as they can be easily verified from the definition.

The first property provides upper bounds for the length of the loops constructed above. Recall that we define $L(\omega)$ as the length of the path $\omega$; see below \eqref{eq: random walk paths} for reference.
\begin{lemma}
\begin{enumerate}[label=(\roman*)]
    \item For two vertices $\x ,\y \in \T_N$, we have 
    \begin{equation}
    \label{eq: upper bound for length of beta}
        L(\beta(\x,\y)) \leq 2d \cdot (d_{\infty}(\x,\y)+2).
    \end{equation}
    \item For each $\mathsf{C} \subset \T_N$ and $\x \in \sfC$, we have
    \begin{equation}
    \label{eq: upper bound for length of betatree}
        L(\beta^{\mathrm{Tree}}(\sfC,\x)) \leq 6dl_N(|\mathsf{C}|-1).
    \end{equation}
\end{enumerate} 
\end{lemma}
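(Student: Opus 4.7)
The plan is to prove (i) and (ii) by direct inspection of the explicit constructions in Section~\ref{sec: Construction of loops to be inserted}, case by case for (i) and using a standard DFS/spanning-tree observation for (ii).

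For part (i), I would treat the four types of the difference vector $\bs{\Delta} = \y - \x$ separately. In type (a), the $\y \to \x$ leg decreases $d_1(\cdot,\x)$ by one at each step and hence has length $\sum_{i=1}^d |\Delta_i| \leq d \cdot d_\infty(\x,\y)$; the $\x \to \y$ leg behaves symmetrically, yielding total length at most $2d \cdot d_\infty(\x,\y) \leq 2d(d_\infty(\x,\y)+2)$. In type (b), the $\y\to\x$ leg uses exactly $|\Delta_i| = d_\infty(\x,\y)$ steps and the $\x\to\y$ leg uses $|\Delta_i|+2$ steps (two of them coming from the $\pm e_{i+1}$ detour), so $L(\beta) = 2d_\infty(\x,\y)+2 \leq 2d(d_\infty(\x,\y)+2)$. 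In type (c) the same accounting gives $L(\beta) = 2d_\infty(\x,\y)+4 \leq 2d(d_\infty(\x,\y)+2)$. Finally, in type (d) the loop $\beta_i(\x)$ has length $2$, which is $\leq 4d = 2d(0+2)$.

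For part (ii), the key combinatorial fact is that a depth-first-search exploration of a tree with $|\sfC|$ vertices traverses each of its $|\sfC|-1$ edges exactly twice (once when descending, once when backtracking). Therefore the auxiliary loop $\beta^{\Tree,\,\mathrm{Aux}}(\sfC,\x)$, written as $(\z_0,\z_1,\ldots,\z_L)$, satisfies $L = 2(|\sfC|-1)$. By the definition of the auxiliary graph $G(\sfC)$, every edge $\{\z_k,\z_{k+1}\}$ of $\Tree(\sfC)$ satisfies $d_\infty(\z_k,\z_{k+1}) \leq 3l_N$, so each piece $\mathsf{Path}(\z_k,\z_{k+1})$ has length at most $d \cdot 3 l_N = 3dl_N$. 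Concatenating these pieces gives
\begin{equation*}
L(\beta^{\Tree}(\sfC,\x)) \leq \sum_{k=0}^{L-1} 3dl_N = 2(|\sfC|-1)\cdot 3dl_N = 6dl_N(|\sfC|-1),
\end{equation*}
which is exactly \eqref{eq: upper bound for length of betatree}.

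There is no genuine obstacle here: both bounds reduce to a bookkeeping exercise on the explicit construction, with the only non-trivial (though standard) ingredient being the DFS double-traversal identity used in (ii). The proof is essentially the verification the authors refer to when they write ``easily verified from the definition.''
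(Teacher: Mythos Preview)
Your proposal is correct and is exactly the direct verification the paper intends: the authors explicitly omit the proof, writing that both bounds ``can be easily verified from the definition,'' and your case-by-case accounting for (i) together with the DFS double-traversal identity and the stated bound $L(\mathsf{Path}(\y,\x))\leq d\,d_\infty(\x,\y)$ for (ii) is precisely that verification.
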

The second key property is that the part of the loop $\beta(\x,\y)$ before hitting $\x$ and the part of the loop $\beta(\x,\y)$ after hitting $\x$ are disjoint.
This property allows us to deduce $\beta(\x,\y)$ from $\widetilde{\omega} \in \Psi(\omega)$, enabling the recovery of the path $\omega$ from $\wt{\omega}$.

\begin{lemma}
    \label{lem: loops no self intersecting}
    For any distinct $\x \neq \y \in \T_N$, the loop $\beta=\beta(\x,\y)$ satisfies  
    \begin{equation}
    \label{eq: loops no self intersecting}
        \beta[1,H_{\x}-1] \cap \beta[H_{\x}+1,L(\beta)-1]=\emptyset,
    \end{equation}
    where $H_{\x}$ is the time when the loop $\beta$ first hits vertex $\x$.
\end{lemma}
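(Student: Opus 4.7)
The plan is to verify the claim by a direct case analysis according to the type of $\bs{\Delta}=\y-\x$. Since $\x\neq\y$, type (d) is vacuous, leaving three cases to treat; in each of them the outgoing sub-path $\beta[0,H_\x]$ and the returning sub-path $\beta[H_\x,L(\beta)]$ admit an explicit description, and the claim amounts to checking that their interior vertices do not coincide.

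For types (b) and (c) I would argue geometrically. The outgoing sub-path is a monotone segment confined to a single coordinate axis, so all its interior vertices satisfy $z_{i+1}=x_{i+1}$ (in type (b)) or $z_2=x_2$ (in type (c)). The returning sub-path is obtained by first moving orthogonally to this axis (in direction $e_{i+1}$ for type (b); in directions $e_2$ then $e_1$ for type (c)), traversing a parallel shifted axial segment, and then reversing the orthogonal steps just before $\y$. Consequently every interior vertex of the returning sub-path sits in a translated hyperplane disjoint from the one containing the outgoing interior, which yields the claim.

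For type (a) both sub-paths use the same "fix smallest differing index first" rule, and the argument requires more bookkeeping. Let $i_1<i_2$ be the two smallest indices with $\Delta_{i_1},\Delta_{i_2}\neq 0$, which exist by the definition of type (a). To each interior vertex $\z=\beta(k)$ of the outgoing sub-path I attach an \emph{active index} $j=j(k)$, defined as the smallest coordinate on which $\z$ differs from $\x$. Directly from the construction one verifies that $z_i=x_i$ for $i<j$, $z_i=y_i$ for $i>j$, and $z_j$ lies strictly between $y_j$ and $x_j$ along coordinate $j$ (we have taken at least one step along coordinate $j$ but have not yet completed it). An analogous description with $\x$ and $\y$ interchanged holds for interior vertices of the returning sub-path, giving a return active index $j'$.

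If a vertex $\z$ lies in both interior portions, comparing the two descriptions on coordinates below $\min(j,j')$ and above $\max(j,j')$ forces $\Delta_i=0$ there, so necessarily $\min(j,j')\leq i_1$ and $\max(j,j')\geq i_2$. Examining the three subcases $j=j'$, $j<j'$, $j>j'$ then yields a contradiction in each: $j=j'$ forces $\Delta_i=0$ for every $i\neq j$, contradicting the presence of two distinct non-zero components; $j<j'$ forces $z_{i_2}=y_{i_2}$ on the outgoing side but $z_{i_2}\neq y_{i_2}$ on the returning side; the case $j>j'$ is symmetric at coordinate $i_1$. This rules out any overlap and completes the proof. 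The only mild obstacle is the last piece of bookkeeping in type (a), in particular the boundary cases at the start and end of each "phase" of coordinate-fixing; once the active-index formalism is set up, it reduces to an elementary inspection.
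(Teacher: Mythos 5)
Your decomposition by the type of $\bs{\Delta}$ is the natural one, and the hyperplane argument for types (b) and (c) is correct. The active-index framework you set up for type (a) is also the right approach, but two details of the bookkeeping are wrong, and as written the subcase $j>j'$ does not actually produce a contradiction. First, the claim that at an interior outgoing vertex $\z$ the active coordinate $z_j$ lies \emph{strictly} between $x_j$ and $y_j$ fails at the ``transition'' vertices, where one coordinate has just been fully adjusted and the next has not yet started: there $z_j=y_j$ (and, mirror-symmetrically, $z_{j'}=x_{j'}$ can hold on the returning side). The correct invariants are only $z_i=x_i$ for $i<j$, $z_j\neq x_j$, and $z_i=y_i$ for $i>j$, together with their mirrors on the returning side. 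Second, the comparison on coordinates below $\min(j,j')$ and above $\max(j,j')$ pins $\min(j,j')$ as the smallest index with $\Delta_i\neq 0$ and $\max(j,j')$ as the largest such index. In the subcase $j>j'$ (so $j'=i_1$), looking at coordinate $i_1$ gives $z_{i_1}=x_{i_1}$ from the outgoing side and only $z_{i_1}\neq y_{i_1}$ from the returning side; since $x_{i_1}\neq y_{i_1}$ these are compatible, so there is no contradiction at $i_1$ -- it is precisely the overstated ``strictly between'' claim that would make it one. Likewise, in the subcase $j<j'$ your choice of coordinate $i_2$ fails whenever $\bs{\Delta}$ has more than two nonzero components, since then $i_2<j'$ and $z_{i_2}=y_{i_2}$ on both sides.

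The fix is short: take the contradiction at coordinate $\max(j,j')$ rather than at $i_2$ or $i_1$. If $j<j'$, the outgoing side gives $z_{j'}=y_{j'}$ (because $j'>j$), while the returning side gives $z_{j'}\neq y_{j'}$ by the very definition of $j'$. If $j>j'$, the outgoing side gives $z_j\neq x_j$ by definition of $j$, while the returning side gives $z_j=x_j$ (because $j>j'$). With this replacement, and with the weaker (true) invariants in place of the ``strictly between'' statement, your type (a) argument closes and the proof is complete.
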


\subsection{Loop insertion}
\label{sec: Loop insertion subsec}
In this subsection, we will construct the mapping $\Psi$ in the statement of Proposition \ref{prop: Psi properties} by first inserting loops $\beta(\x,\y)$ and $\beta^{\mathrm{Tree}}(\sfC,\x)$ into the path $\omega$ (at \textbf{Stage 3} of the strategy), and then allowing the path to roam freely (at \textbf{Stage 4}). 
 To describe the exact time (and location) to insert these loops, we introduce the hitting time of the path $\omega$ for  a vertex $\z \in \T_N$ as follows:
\begin{equation} \label{eq: def: hitting time after lambda}
    H_{\lambda}(\omega,\z)=\inf \{k \geq \lambda t_{\cov}\, : \,\omega(k) = \z\},
\end{equation}
which is the first time that the path $\omega$ hits the vertex $\z$ after time $\lambda t_{\cov}$.

\smallskip
We are ready to define the mapping $\Psi$.  
From now on in this subsection, we fix the path $\omega \in \CA$, where $\CA$ is an $(\F,\varepsilon,M_0)$-good path collection; see Definition \ref{def: good paths prop} for the definition of $(\F,\varepsilon,M_0)$-good path collections.

The first step is to insert loops of the form $\beta_{\x}$ defined below to connect the vertices in $\R(\L^{\ai}_{\F}(\omega))$ to the path $\omega$.
Recall the definition of $\near(\cdot,\cdot)$ and the types of $\x$ at the beginning of Section \ref{sec: Construction of loops to be inserted}.
\begin{definition}
\label{def: loops to be inserted in STEP1}
    For any $\omega \in \CA$ and $\x\in \R(\L_{\F}^{\ai}(\omega))$, we define
    \begin{equation}
    \label{eq: loops to be inserted in STEP1 for each vertex}
        \beta_{\x}=\beta_{\x}(\omega)=
        \begin{cases}
            \beta(\x,\near(\omega,\x))&\text{if $\x \neq \near(\omega,\x)$},\\
            \beta_{i}(\x)&\text{if $\x = \near(\omega,\x)$ and $\Dir(\omega,H_{\ai}(\omega,\near(\omega,\x))-1)=i$},
        \end{cases}
    \end{equation}
    where $\Dir(\omega,k)=i$ if $\omega(k+1)-\omega(k)=\pm e_i$ for a path $\omega$ and $k \geq 0$. 
    Moreover, we define 
    \begin{equation}
    \label{eq: loops to be inserted in STEP1}
        B(\omega)=\{\beta_{\x}(\omega)\,:\, \x \in \R(\L^{\ai}_{\F}(\omega))\}
    \end{equation}
    as the collection of loops to be inserted in the first step.
 
\end{definition}
For each $\x \in \R(\L^{\ai}_{\F}(\omega))$, the loop $\beta_{\x}(\omega)$ is inserted one-by-one to connect $\x$ to the path $\omega$ at time $H_{\ai}(\omega,\near(\omega,\x))$. It remains 
 to determine the order of insertion. We now claim that the mapping $\near(\omega,\cdot)\mid_{\R(\L^{\ai}_{\F}(\omega))}$ is injective\footnote{In fact, by definition of $\bs{r}_{\x}(\omega)$ and \eqref{no too large late island}, we get $d_{\infty}(\x,\near(\omega,\x)) = \bs{r}_{\x}(\omega) \leq l_n$ for each $\x \in \R(\L^{\ai}_{\F}(\omega))$.
Recall that $d_\infty(x,x')>3l_N$ for any $x\neq x'\in \R(\L^{\ai}_{\F}(\omega))$, so $\near(\omega,\cdot)\mid_{\R(\L^{\ai}_{\F}(\omega))}$ is injective.\label{footnote: loops mutually disjoint}}, which 
ensures that 
$H_{\ai}(\omega,\near(\omega,\x)) \neq H_{\ai}(\omega,\near(\omega,\x'))$ for any $\x \neq \x' \in \R(\L^{\ai}_{\F}(\omega))$.
Hence, we can label the elements of $\R(\L_{\F}^{\ai}(\omega))$ as $\x_1,\x_2,\ldots \x_n$, where 
\begin{equation}
\label{eq: number of roots}
    n=|\R(\L_{\F}^{\ai}(\omega))|,
\end{equation} 
in such a way that $H_{\ai}(\omega,\near(\omega,\x_j))$ is strictly increasing with respect to $j$.
For simplicity, we let $\beta^{(j)}=\beta^{(j)}(\omega)=\beta_{\x_j}(\omega)$, which gives an order on $B(\omega)$. 
Furthermore, we denote $H_{\ai}(\omega,\near(\omega,\x_j))$ by $H_j(\omega)$ for the remainder of this section.

We now recursively insert the loops in $B(\omega)$ to connect the vertices in $\R(\L^{\ai}_{\F}(\omega))$ to the path $\omega$. 
Recall again the definition of $\Omega_k$ below \eqref{eq: random walk paths}. 
For a path $\omega \in \Omega_{L_1}$, a loop $\beta\in \Omega_{L_2}$ 
and $0 \leq k \leq L_1$ such that $\omega(k)=\beta(0)=\beta(L_2)$, we define the insertion map $\insmap(\omega,\beta,k)\in\Omega_{L_1+L_2}$ whose output is the new path with $\beta$ inserted into $\omega$ at time $k$.
    
We now set $\omega^{(0)}=\omega$ and $S^{(0)}(\omega)=0$. 
Using the mapping $\insmap$, 
we recursively define the result and total duration after $j$-th insertion, denoted by $\omega^{(j)}$ and $S^{(j)}$ resp.\ for $1 \leq j \leq n$ as follows:
\begin{equation}
\label{eq: inserting loops STEP 1}
    \omega^{(j)}=\insmap(\omega^{(j-1)},\beta^{(j)},H_j(\omega)+S^{(j-1)}(\omega)), \quad S^{(j)}(\omega)=S^{(j-1)}(\omega)+L(\beta^{(j)}).
\end{equation}

We now turn to the second step at \textbf{Stage 3}, which involves inserting loops in the form of $\beta^{\mathrm{Tree}}(\sfC,\x)$ to cover the remaining late points $\L_{\F}^{\ai}(\omega) \setminus \R(\L_{\F}^{\ai}(\omega))$.

Recall that $\C(\L_{\F}^{\ai}(\omega))$ represents the collection of connected components of $G(\L_{\F}^{\ai}(\omega))$. 
There exists a natural bijection $\x \mapsto \sfC_{\x}$ between $\R(\L_{\F}^{\ai}(\omega))$ and $\C(\L_{\F}^{\ai}(\omega))$ such that $\x \in \sfC_{\x}$.

\begin{definition}
    For each $\omega \in \CA$ and $\x\in \R(\L_{\F}^{\ai}(\omega))$, we define the collection of loops to be inserted in the second step as follows:
    \begin{equation}
    \label{eq: loops to be inserted in STEP2}
        B^{\Tree}(\omega)=\big\{\beta^{\Tree}(\sfC_{\x},\x)\,:\,\x \in \R(\L_{\F}^{\ai}(\omega)) \big\}.
    \end{equation}
\end{definition}
For clarity, we denote $\beta^{(j),\text{Tree}}=\beta^{\mathrm{Tree}}(\mathsf{C}_{\x_j},\x_j)$, which provides an order on $B^{\Tree}(\omega)$.
For simplicity, let $H_j(\omega^{(n)})=H_{\ai}(\omega^{(n)},\x_j)$. 

We now recursively insert the loops in $B^{\Tree}(\omega)$. 
We initialize by setting $S^{(0),\text{Tree}}(\omega)=0$.
For each $1 \leq j \leq n$, write
\begin{equation}
\label{eq: inserting loops STEP 2}
\begin{split}
    \omega^{(n+j)}&=\insmap(\omega^{(n+j-1)},\beta^{(j),\text{Tree}},H_j(\omega^{(n)})+S^{(j-1),\text{Tree}}(\omega));\\
    S^{(j),\text{Tree}}(\omega)&=S^{(j-1),\text{Tree}}(\omega)+L(\beta^{(j),\text{Tree}})
\end{split}
\end{equation}
for the result and total duration after $j$-th insertion, respectively.

The final step of constructing $\Psi$ involves converting the path $\omega^{(2n)}$ into paths of length $T_4$, corresponding to \textbf{Stage 4}.
By estimating the total length of the loops inserted, we can show that $L(\omega^{(2n)}) \leq T_4$ for any $\omega \in \CA$; see Lemma \ref{lem: upper bound for path with loops} below.
This ensures that the path $\omega^{(2n)}$ can continue to roam freely after the time $L(\omega^{(2n)})$ until it reaches the total length  $T_4$. 
We collect all such extended paths into the set defined as $\Psi(\omega)$.

\begin{definition}
    For any $\omega \in \CA$, we let 
    \begin{equation}
    \label{eq: def Psi}
        \Psi(\omega)=\Big\{\widetilde{\omega} \in \Omega_{T_4}: \widetilde{\omega}=\omega^{(2n)}|_{L(\omega^{(2n)})} \Big\}.
    \end{equation}
\end{definition}

The construction of the mapping $\Psi$ is now complete.

\subsection{Proof of Proposition \ref{prop: Psi properties}}
\label{sec: proof of psi props}
The claim \eqref{eq: covering all vertices Psi} is a direct consequence of the construction of $\Psi$ and \eqref{covering complement of F}. We hence omit the proof.


We now proceed to the proof of \eqref{eq: enough covering paths Psi}.
The key to the proof lies in establishing an upper bound for $L(\omega^{(2n)})$, which is provided in Lemma \ref{lem: upper bound for path with loops}.

\begin{lemma}
\label{lem: upper bound for path with loops}
    For any $\omega \in \CA$, where $\CA$ is an $(\F,\varepsilon,M_0)$-good path collection, we have 
    \begin{equation}
        \label{eq: upper bound for path with loops}
        L(\omega^{(2n)})\leq T_3 + J(\F,M_0),
    \end{equation}
    where $J(\F,M_0)$ is defined in \eqref{cost of inserting loops}.
    In particular, by \eqref{eq: cost of inserting loops not large}, we have $L(\omega^{(2n)})\leq T_4$. 
\end{lemma}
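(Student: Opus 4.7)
The plan is to write
$$L(\omega^{(2n)}) = T_3 + \sum_{j=1}^n L(\beta^{(j)}) + \sum_{j=1}^n L(\beta^{(j),\Tree}),$$
which follows directly from \eqref{eq: inserting loops STEP 1} and \eqref{eq: inserting loops STEP 2}, since each application of $\insmap$ extends the current path by exactly the length of the inserted loop. The task then reduces to bounding the two sums by $2d \cdot \mdist(\L_\F^{\ai}(\omega))$, after which \eqref{no too much cost for extra loops} finishes the job.

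For the first sum I would apply \eqref{eq: upper bound for length of beta} when $\x_j \neq \near(\omega,\x_j)$, using the identity $d_\infty(\x_j, \near(\omega,\x_j)) = \bs{r}_{\x_j}(\omega)$, and observe that in the remaining case $\x_j = \near(\omega,\x_j)$ the inserted loop $\beta_i(\x_j)$ defined in \eqref{eq: beta-i} has length exactly $2$. Either way $L(\beta^{(j)}) \leq 2d(\bs{r}_{\x_j}(\omega) + 2)$, so summing over $j$ gives
$$\sum_{j=1}^n L(\beta^{(j)}) \leq 2d \sum_{\x \in \R(\L_\F^{\ai}(\omega))} \bigl(\bs{r}_{\x}(\omega) + 2\bigr).$$
For the second sum, I would use \eqref{eq: upper bound for length of betatree} and the fact that $\{\sfC_{\x_j}\}_{j=1}^n$ partitions $\L_\F^{\ai}(\omega)$, yielding $\sum_{j=1}^n (|\sfC_{\x_j}| - 1) = |\L_\F^{\ai}(\omega)| - |\R(\L_\F^{\ai}(\omega))|$, and hence
$$\sum_{j=1}^n L(\beta^{(j),\Tree}) \leq 6dl_N \bigl(|\L_\F^{\ai}(\omega)| - |\R(\L_\F^{\ai}(\omega))|\bigr).$$
Adding these two estimates matches $2d\cdot\mdist(\L_\F^{\ai}(\omega))$ term-by-term (the factor $6dl_N = 2d \cdot 3l_N$ is designed precisely for this), so \eqref{no too much cost for extra loops} (recall $\ai = \beta_N$) gives the required bound $\leq J(\F, M_0)$, proving \eqref{eq: upper bound for path with loops}.

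The ``In particular'' assertion is immediate from \eqref{eq: cost of inserting loops not large}: combining with the definitions $T_3 = \lfloor \gamma t_{\cov} - \varepsilon N^d \rfloor$ and $T_4 = \lfloor \gamma t_{\cov} \rfloor$, one has $T_3 + J(\F,M_0) \leq T_3 + \varepsilon N^d - 2 \leq T_4$. No real obstacle arises in this proof, since $\mdist$ and the length estimates for the two families of inserted loops were chosen precisely so that the term-by-term matching above works; the main thing to verify is the correspondence between the $(\bs{r}_\x + 2)$ contributions and the $\beta(\x,\y)$ lengths on one side, and between the $3l_N(|F|-|\R(F)|)$ contribution and the tree-loop lengths on the other.
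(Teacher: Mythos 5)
Your proof is correct and follows essentially the same route as the paper: decompose $L(\omega^{(2n)}) = T_3 + S^{(n)}(\omega) + S^{(n),\Tree}(\omega)$, bound each sum using the loop-length estimates and the partition structure of the clusters, then match term-by-term with $2d\cdot\mdist(\L_\F^{\ai}(\omega))$ and invoke \eqref{no too much cost for extra loops}. The one place where you are slightly more careful than the paper's exposition is in handling the case $\x_j = \near(\omega,\x_j)$ separately: the paper writes $S^{(n)}(\omega) = \sum L(\beta(\x,\near(\omega,\x)))$ and applies \eqref{eq: upper bound for length of beta} directly, glossing over the fact that in the degenerate case the inserted loop is $\beta_i(\x)$ rather than $\beta(\x,\x)$; your observation that $L(\beta_i(\x)) = 2 \leq 2d(\bs{r}_\x+2)$ because $\bs{r}_\x = 0$ in that case patches this omission cleanly. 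This is a minor point of rigor rather than a different argument, so the two proofs should be regarded as the same.
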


\begin{proof}[Proof of \eqref{eq: enough covering paths Psi} assuming Lemma \ref{lem: upper bound for path with loops}]
    From the definition of $\Psi$ in \eqref{eq: def Psi}, we know that
    \begin{equation}
        |\Psi(\omega)|=(2d)^{T_4 - L(\omega^{(2n)})}.
    \end{equation}
    Combining this with \eqref{eq: upper bound for path with loops} gives \eqref{eq: enough covering paths Psi}.
\end{proof}

We now prove Lemma \ref{lem: upper bound for path with loops} by  estimating the total length of the loops inserted at \textbf{Stage 3}.

\begin{proof}[Proof of Lemma \ref{lem: upper bound for path with loops}]
    By the definition of $\Psi$, we have
    \begin{equation}
    \label{eq: reprentation of L(omega2n)}
        L(\omega^{(2n)})=T_3+S^{(n)}(\omega) + S^{(n),\text{Tree}}(\omega).
    \end{equation}
    Thus, it suffices to bound $S^{(n)}(\omega)$ and $S^{(n),\text{Tree}}(\omega)$ separately.
    
    For $S^{(n)}(\omega)$, we recall the definition of $\bs{r}_{x}(\omega)$ (see \eqref{eq: def: distance to random walk between T_2 and T_3}) and $\near(\cdot,\cdot)$ (see the beginning of Section \ref{sec: Construction of loops to be inserted}), and obtain by \eqref{eq: upper bound for length of beta}
    \begin{equation}
    \label{eq: upper bound for total length in STEP 1}
        S^{(n)}(\omega) = \sum_{\x \in \R(\L_{\F}^{\ai}(\omega))}L(\beta(\x,\near(\omega,\x))) \leq \sum_{\x \in \R(\L_{\F}^{\ai}(\omega))} 2d(\bs{r}_{\x}(\omega)+2).
    \end{equation}
    For $S^{(n),\text{Tree}}(\omega)$, we recall $n=|\R(\L_{\F}^{\ai}(\omega))|$ (see \eqref{eq: number of roots}) and obtain by \eqref{eq: upper bound for length of betatree} 
    \begin{equation}
    \label{eq: upper bound for total length in STEP 2}
    \begin{aligned}
        S^{(n),\Tree}(\omega)=\sum_{j=1}^{n}L(\beta^{\Tree}(\sfC_j,\x_j))\leq \sum_{\sfC \in \C(\L_{\F}^{\ai}(\omega))} 6d l_N(|\sfC|-1)
        =6dl_N(|\L_{\F}^{\ai}(\omega)|-|\R(\L_{\F}^{\ai}(\omega))|),
    \end{aligned}
    \end{equation}
    where the last equation follows from $|\C(\L_{\F}^{\ai}(\omega))|=n$ and the fact that $\C(\L_{\F}^{\ai}(\omega))$ is a partition of $\L_{\F}^{\ai}(\omega)$.
    Hence, the total length of the loops inserted at \textbf{Stage 3} is
    \begin{equation}
    \label{eq: total length of loops}
    \begin{aligned}
        S^{(n)}(\omega) + S^{(n),\text{Tree}}(\omega)\overset{\eqref{eq: upper bound for total length in STEP 1}}&{\underset{ \eqref{eq: upper bound for total length in STEP 2}}{\leq}}  \sum_{\x \in \R(\L_{\F}^{\ai}(\omega))} 2d(\bs{r}_{\x}(\omega)+2) + 6dl_n(|\L_{\F}^{\ai}(\omega)|-|\R(\L_{\F}^{\ai}(\omega))|) \\
        \overset{\eqref{no too much cost for extra loops}}&{\underset{\eqref{cost of inserting loops}}{\leq}} J(\F,M_0).
    \end{aligned}
    \end{equation}
    Note that \eqref{no too much cost for extra loops} holds here because $\omega$ is a path in the $(\F,\varepsilon,M_0)$-good path collection $\CA$.
    Combining this with \eqref{eq: reprentation of L(omega2n)} gives \eqref{eq: upper bound for path with loops}.
\end{proof}

We now turn to the proof of \eqref{eq: injective mapping Psi}.
Suppose that $\widetilde{\omega} \in \Psi(\CA)$, where $\Psi(\CA)\overset{\text{def.}}{=}\bigcup_{\omega \in \CA}\Psi(\omega)$.
Intuitively, if all loops and their insertion locations can be uniquely identified from $\widetilde{\omega}$, we can remove these loops to recover the original path.
After deleting all such loops from $\wt{\omega}$, the first $T_3$ steps of the resulting path gives $\omega$. 

For two paths $\omega,\wh{\omega} \in \Omega$, we say that $\wh{\omega}$ is an \emph{\ext}of $\omega$ if $L(\wh{\omega}) \geq L(\omega)$ and $\wh{\omega}=\omega \mid_{L(\omega)}$.
It is obvious that if $\wh{\omega}$ is an \ext of $\omega$ and $L(\omega) \geq \lambda t_{\cov}$, then $H_{\lambda}(\omega,\x)=H_{\lambda}(\wh{\omega},\x)$ for any $\x \in \omega[\lambda t_{\cov}, L(\omega)]$. Given a path $\omega \in \Omega$ and indices $0 \leq k_1 < k_2 \leq L(\omega)$ such that $\omega(k_1)=\omega(k_2)$, we define $\dltmap(\omega,k_1,k_2)\in \Omega_{L(\omega)-k_2+k_1}$ as the result of removing the loop $\omega(k_1,\ldots,k_2)$ from $\omega$.

\medskip

We now recover $\omega$ from $\wt{\omega}$. We first show that we can recover the path $\omega^{(n)}$ from $\wt{\omega}$ by deleting the loops inserted in the second step. Then, we show that $\omega$ can be recovered from $\omega^{(n)}$.

In the following lemma, we show that an \ext of path $\omega^{(n)}$ can be obtained by identifying the set $B^{\Tree}(\omega)$ and deleting the loops it contains.

\begin{lemma}
\label{lem: recover second step}
    For any $\widetilde{\omega} \in \Psi(\CA)$,  where $\CA$ is an $(\F,\varepsilon,M_0)$-good path collection, there exists a path $\ov{\omega}$ such that for any $\omega \in \CA$ satisfying $\widetilde{\omega} \in \Psi(\omega)$, the path
    $\ov{\omega}$ is an \ext of $\omega^{(n)}$.  
\end{lemma}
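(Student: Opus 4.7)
The plan is to extract from $\wt\omega$ the common initial segment $\omega[0,T_2]$, use it to reconstruct the Tree loops $\beta^{(j),\Tree}$, and then build $\ov\omega$ by removing those loops from $\wt\omega$. The central point is that every loop inserted during Stage~3 is placed at a path-position strictly greater than $T_2$: in Step~1 the loop $\beta^{(j)}$ is inserted at position $H_j(\omega)+S^{(j-1)}(\omega)\ge H_j(\omega)\ge T_2+1$ since $\near(\omega,\x_j)\in\omega(T_2,T_3]$, and in Step~2 the loop $\beta^{(j),\Tree}$ is inserted at position $H_j(\omega^{(n)})+S^{(j-1),\Tree}\ge H_j(\omega^{(n)})\ge T_2+1$ since $\x_j\in\L^{\ai}_{\F}(\omega)$ is not visited by $\omega^{(n)}[0,T_2]=\omega[0,T_2]$. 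Consequently $\wt\omega[0,T_2]=\omega^{(2n)}[0,T_2]=\omega[0,T_2]$. From this initial segment I read off $\L^{\ai}(\omega)=\T_N\setminus\wt\omega[0,T_2]$, hence $\L^{\ai}_{\F}(\omega)$, the auxiliary graph $G(\L^{\ai}_{\F}(\omega))$, the representative set $\R(\L^{\ai}_{\F}(\omega))=\{\x_1,\ldots,\x_n\}$, the components $\sfC_{\x_j}$, and the Tree loops $\beta^{(j),\Tree}=\beta^{\Tree}(\sfC_{\x_j},\x_j)$ (purely determined by Definition~\ref{def: loops to be inserted in STEP2}).

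I then construct $\ov\omega$ by a greedy left-to-right scan of $\wt\omega$ beginning at position $T_2$: whenever the current position of $\wt\omega$ equals some $\x_j$ with $\beta^{(j),\Tree}$ not yet removed and the next $L(\beta^{(j),\Tree})$ steps of $\wt\omega$ agree with $\beta^{(j),\Tree}$, I skip them; otherwise I copy the current step into $\ov\omega$. The resulting path has length $L(\ov\omega)=L(\wt\omega)-\sum_{j=1}^{n}L(\beta^{(j),\Tree})=T_4-\sum_{j}L(\beta^{(j),\Tree})\ge L(\omega^{(n)})$, using $T_4\ge L(\omega^{(2n)})=L(\omega^{(n)})+\sum_{j}L(\beta^{(j),\Tree})$, which gives the length bound required for an extension of $\omega^{(n)}$.

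The main obstacle is to verify that the greedy scan targets exactly the $n$ actual Tree-loop insertions in $\omega^{(2n)}$, so that $\ov\omega$ agrees with $\omega^{(n)}$ on $[0,L(\omega^{(n)})]$. The key tool is the pairwise $l^\infty$-separation $d_\infty(\sfC_{\x_j},\sfC_{\x_k})>3l_N$ of distinct components of the auxiliary graph: for $k\ne j$, neither the Step~1 loop $\beta^{(k)}$, confined to the $l_N$-neighborhood of $\x_k$ by $\bs{r}_{\x_k}(\omega)\le l_N$ from \eqref{no too large late island}, nor the Tree loop $\beta^{(k),\Tree}$, confined to the $3l_N$-neighborhood of $\sfC_{\x_k}$ since each $\mathsf{Path}(\z_i,\z_{i+1})$ has $l^\infty$-diameter at most $d_\infty(\z_i,\z_{i+1})\le 3l_N$, can visit $\x_j$. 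Hence every occurrence of $\x_j$ in $\omega^{(2n)}$ after $T_2$ arises from $\omega$ itself, from $\beta^{(j)}$, or from $\beta^{(j),\Tree}$; a short case analysis using the DFS-induced structure of $\beta^{(j),\Tree}$ (which visits all of $\sfC_{\x_j}$, whereas $\beta^{(j)}$ stays within the $l_N$-neighborhood of $\x_j$) then shows that the greedy criterion fires precisely at the true insertion positions, establishing that $\ov\omega$ is an extension of $\omega^{(n)}$ for every admissible $\omega$.
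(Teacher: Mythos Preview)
Your overall strategy---read off $\L^{\ai}_{\F}(\omega)$ from $\wt\omega[0,T_2]$, reconstruct the Tree loops, and delete them---is the same as the paper's. The paper, however, does not use a greedy scan with a pattern-matching check. Instead it proves directly (its equation $H_{\ai}(\wt\omega,\x_j)=H_j(\omega^{(n)})+S^{(j-1),\Tree}(\omega)$) that the \emph{first} visit of $\wt\omega$ to $\x_j$ after $T_2$ is exactly the position at which $\beta^{(j),\Tree}$ was inserted, and that these positions together with the loop lengths are determined by $\wt\omega$; it then deletes at those positions in reverse order.

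Your last paragraph is where the gap lies. You correctly establish $\x_j\notin\beta^{(k)}$ and $\x_j\notin\beta^{(k),\Tree}$ for $k\ne j$, but the promised ``short case analysis using the DFS-induced structure'' is neither spelled out nor the right tool. What you actually need is the following pair of facts, neither of which has anything to do with DFS or with the contrast between $\beta^{(j)}$ and $\beta^{(j),\Tree}$: (i) the Tree-loop insertions are \emph{non-nested}, because each $\beta^{(j'),\Tree}$ is inserted at a position where $\omega^{(n+j'-1)}$ equals $\x_{j'}$, and $\x_{j'}$ does not lie on any previously inserted Tree loop; (ii) the \emph{first} occurrence of $\x_j$ in $\wt\omega$ after $T_2$ is the insertion point of $\beta^{(j),\Tree}$, since the first visit of $\omega^{(n)}$ to $\x_j$ lies inside the Step-1 loop $\beta^{(j)}$ (using that $\x_j$ is a late point and $\x_j\notin\beta^{(j')}$ for $j'\ne j$), and later Tree-loop insertions neither introduce new earlier visits to $\x_j$ nor disturb the contiguity of $\beta^{(j),\Tree}$ by (i). Once (i) and (ii) are in place your greedy scan trivially fires at the correct position for each $j$---in fact the pattern-matching check in your criterion becomes redundant, and you could simply delete the segment of length $L(\beta^{(j),\Tree})$ starting at $H_{\ai}(\wt\omega,\x_j)$, which is precisely what the paper does. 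Without (i) and (ii), the argument is incomplete: the DFS structure does not rule out that a later Tree loop is inserted inside an earlier one, nor does comparing $\beta^{(j)}$ with $\beta^{(j),\Tree}$ help when $|\sfC_{\x_j}|=1$ or when the other vertices of $\sfC_{\x_j}$ happen to lie within the $l_N$-neighborhood of $\x_j$.
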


For clarity in the following proof, we say that a mapping $\Obj(\cdot)$ with domain $\CA$ (e.g., $B^{\Tree}(\cdot)$, $\L_{\F}^{\ai}(\cdot)$, etc.) is \emph{determined} by $\wt{\omega}$ if $\Obj(\omega)$ remains the same for all $\omega \in \CA$ such that $\widetilde{\omega} \in \Psi(\omega)$.
With slight abuse of notation, we also say that $\Obj(\omega)$ is determined by $\wt{\omega}$ if it is presupposed that $\wt{\omega} \in \Psi(\omega)$ and the mapping $\Obj(\cdot)$ is determined by $\wt{\omega}$.

\begin{proof}
    Let $\wt{\omega} \in \Psi(\CA)$, and fix $\omega \in \CA$ such that $\wt{\omega} \in \Psi(\omega)$.
    We aim to establish that the set $B^{\Tree}(\omega)$ is determined by $\wt{\omega}$.
    Since the insertion of loops occurs only after $T_2$ (by the definition of $\Psi$), it follows that $\wt{\omega}=\omega\mid_{T_2}$.
    Hence, $\L_{\F}^{\ai}(\widetilde{\omega})=\L_{\F}^{\ai}(\omega)$.
    The definition of $B^{\Tree}(\omega)$ depends solely on the set $\L_{\F}^{\ai}(\omega)$, and thus $B^{\Tree}(\omega)$ is determined by $\wt{\omega}$.

    Note that $n$ (see \eqref{eq: number of roots} for definition) is also determined by $\wt{\omega}$.
    To delete the loops in $B^{\Tree}(\omega)$ via the mapping $\dltmap$, it is necessary to demonstrate that $H_j(\omega^{(n)})+S^{(j-1),\text{Tree}}(\omega)$, the insertion times of loops into the original path, is determined by $\wt{\omega}$.
    Recall that we have written the set $\R(\L_{\F}^{\ai}(\omega))$ as $\x_1,\x_2, \ldots, \x_n$.
    Although the representation may depend on $\omega$, the set $\L_{\F}^{\ai}(\omega)$ is determined by $\wt{\omega}$.
    Note that\footnote{Otherwise, $\x_j \in \mathsf{Path}(\z_i,\z_{i+1})$ for some $\z_i\sim \z_{i+1} \in \mathsf{C}_{j'} $, leading to $|\z_i-\x|_{\infty} \leq |\z_i-\z_{i+1}|_{\infty}$, implying $\x_j \in \mathsf{C}_{j'}$, which contradicts the assumption. Similarly, $\x_j \notin \beta^{(j')}$.} $\x_j \notin \beta^{(j'),\mathrm{Tree}}$ and $\x_j \notin \beta^{(j')}$ when $j' \neq j$,
    so\footnote{We only consider the first time a path visits a vertex after time $T_2$ here.} $\omega$ visits the vertices $\near(\omega,\x_j)$ after time $T_2$ in the same sequence as $\wt{\omega}$ visits vertices $\x_j$ after time $T_2$.
    This ensures that the expression $\x_1,\x_2, \ldots, \x_n$ is determined by $\wt{\omega}$.
    Furthermore, by the definition of $\omega^{(n+j)}$ and $S^{(j)}$, we obtain
    \begin{equation}
    \label{eq: changing of hitting time}
        H_{\ai}(\widetilde{\omega},\x_j)=H_{\ai}(\omega^{(n+j')},\x_j)=H_j(\omega^{(n)})+S^{(j-1),\text{Tree}}(\omega) \,\,\,\,\mbox{   for all $1 \leq j \leq j'\leq n$},
    \end{equation}
    which implies that $H_j(\omega^{(n)})+S^{(j-1),\text{Tree}}(\omega)$ is determined by $\wt{\omega}$. 
    
    Consider the following deletion process.
    We start with the loop $\wt{\omega}^{(2n)}=\wt{\omega}$ and define recursively
    \begin{equation}
    \label{eq: deletion process to recover STEP 2}
        \wt{\omega}^{(n+j-1)}=\dltmap(\wt{\omega}^{(n+j)}, H_j(\omega^{(n)})+S^{(j-1),\text{Tree}}(\omega),H_j(\omega^{(n)})+S^{(j-1),\text{Tree}}(\omega)+L(\beta^{\Tree,(j)})),
    \end{equation}
    for all $1 \leq j \leq n$. 
    By induction, we can derive 
    that $\dltmap$ can be applied as shown in \eqref{eq: deletion process to recover STEP 2} and $\wt{\omega}^{(n+j)}$ is an \ext of $\omega^{(n+j)}$ for all $0 \leq j \leq n$. 
    Since the deletion process is determined by $\wt{\omega}$ from the discussion above, the path $\wh{\omega}=\wt{\omega}^{(n)}$ is also determined by $\wt{\omega}$. This finishes the proof of Lemma \ref{lem: recover second step}.
\end{proof}

To recover $\omega$ from $\wt{\omega}^{(n)}$, we show that it is possible to recover $\omega$ from any extension of $\omega^{(n)}$. 
Define $\overline{\Psi}(\omega)$ as the collection of all extensions of $\omega^{(n)}$, and let $\overline{\Psi}(\CA)\overset{\text{def.}}{=}\bigcup_{\omega \in \CA}\overline{\Psi}(\omega)$. 
We state the following proposition:
\begin{prop}
\label{prop: recover first step}
    Suppose that $\overline{\omega} \in \ov{\Psi}(\CA)$, where $\CA$ is an $(\F,\varepsilon,M_0)$-good path collection. There exists a \emph{unique} path $\omega \in \CA$ such that $\ov{\omega} \in \ov{\Psi}(\omega)$.
\end{prop}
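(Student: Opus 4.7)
The plan is to construct a deterministic decoding map $\Phi \colon \ov{\Psi}(\CA) \to \CA$ such that $\Phi(\ov{\omega}) = \omega$ whenever $\ov{\omega} \in \ov{\Psi}(\omega)$; the uniqueness asserted in the proposition then follows at once.

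First I would observe that every first-step insertion takes place at a position $H_j(\omega) + S^{(j-1)}(\omega) \geq H_1(\omega) \geq T_1$, so $\ov{\omega}|_{T_1} = \omega|_{T_1}$. Consequently $\L^{\alpha_N}_{\F}(\omega) = \F \setminus \ov{\omega}[0,T_1]$, its auxiliary graph, and its representative set $\R \overset{\text{def.}}{=} \R(\L^{\alpha_N}_{\F}(\omega))$ are all computable from $\ov{\omega}$ with no further input.

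Next I would localise and delete the loops $\beta^{(1)}, \ldots, \beta^{(n)}$ one by one in chronological order, resting on two structural ingredients. The first is a confinement estimate: each $\beta^{(k)}$ sits inside the cube $Q(\near(\omega,\x_k), l_N+1)$, and the $3l_N$-spacing of $\R$ built into the auxiliary graph, together with the bound $|\x_k - \near(\omega,\x_k)|_\infty \leq l_N$ inherited from the good-path hypothesis on $\CA$, places every other representative outside this cube; thus $\x_j$ is visited only by $\beta^{(j)}$ among all the inserted loops. The second is a local-signature lemma (Lemma \ref{lem: identify the case Psi}) which reads off the type (a), (b), (c) or (d) of $\x_j$ from the local pattern of $\ov{\omega}$ around a visit to $\x_j$, using the self-avoidance from Lemma \ref{lem: loops no self intersecting}, the extra $\pm e_{i+1}$ steps built into type (b), the extra $\pm e_1, \pm e_2$ steps built into type (c), and the back-and-forth pattern $(\x - \epsilon e_i, \x, \x + e_i, \x)$ that characterises type (d). Once the type is identified, the loop $\beta^{(j)}$ -- including its length and its starting point $\near(\omega,\x_j)$ -- is uniquely reconstructed from the local picture and can be removed from $\ov{\omega}$; doing this for $j = 1, \ldots, n$ and truncating the result to length $T_3$ yields a candidate $\omega^\ast$.

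The main obstacle is to rule out that the first visit to $\x_j$ in $\ov{\omega}$ occurs at a \emph{spontaneous} visit of $\omega$ in $(T_1, H_j(\omega))$ rather than inside $\beta^{(j)}$: in principle $\omega$ might already pass through $\x_j$ before the loop $\beta^{(j)}$ is inserted, so that the first visit of $\x_j$ in $\ov{\omega}$ need not lie in any loop at all. I would address this with a dedicated lemma (Lemma \ref{lem: inherit local information}) showing that any spontaneous visit of $\omega$ to $\x_j$ inherits only the generic structure of a random-walk step and hence fails every loop signature listed in Definition \ref{def: loops to be inserted in STEP1}; the first visit of $\x_j$ in $\ov{\omega}$ whose local neighbourhood matches some loop signature must therefore be the one inside $\beta^{(j)}$. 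This makes the decoding well-defined, forces $\omega^\ast = \omega$, and establishes the claimed uniqueness.
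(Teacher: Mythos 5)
Your overall architecture matches the paper's proof: recover $\R(\L^{\ai}_{\F}(\omega))$ from the preserved prefix of $\ov{\omega}$, identify the type of each representative from the local signature of $\ov{\omega}$ near its first visit after $T_2$, reconstruct each loop, and delete in chronological order (cf.\ Lemmas \ref{lem: identify the late points Psi}, \ref{lem: inherit local information}, \ref{lem: identify the case Psi}, \ref{lem: appch give the loop 22}, \ref{lem: final prep for recover first step}). Two small inaccuracies: insertions happen only at times $\geq T_2$ (not merely $\geq T_1$), and the representatives come from $\L^{\ai}_{\F}(\omega)=\L^{\beta_N}_{\F}(\omega)$, so what you actually need and get is the stronger identity $\ov{\omega}\mid_{T_2}=\omega\mid_{T_2}$.

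The genuine gap is in your handling of the spontaneous-visit obstacle. You propose a lemma asserting that any spontaneous visit of $\omega$ to $\x_j$ "inherits only the generic structure of a random-walk step and hence fails every loop signature," and you attribute this content to Lemma \ref{lem: inherit local information}. This claim is false: a random walk can backtrack, producing exactly the type-(d) pattern (arrive at $\x_j$ along direction $e_i$, step to $\x_j+e_i$, step back to $\x_j$), so a spontaneous visit \emph{can} match a loop signature. It is also not what Lemma \ref{lem: inherit local information} states; that lemma is the positive assertion that the local pattern of $\ov{\omega}$ at $H_{\ai}(\ov{\omega},\x_j)$ coincides with that of the inserted loop $\beta^{(j)}$. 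The correct resolution is by construction, not by signature discrimination: when the type of $\x_j$ is (a), (b) or (c), one has $\near(\omega,\x_j)\neq\x_j$, hence $\x_j\notin\omega(T_2,T_3]$ and $\omega$ makes no spontaneous visit to $\x_j$ after $T_2$ at all; when the type is (d), the loop $\beta_i(\x_j)$ is by definition inserted at the \emph{first} visit of $\omega$ to $\x_j$ after $T_2$, so there is no earlier spontaneous visit competing with it. Combined with the mutual disjointness of the first-step loops and the extension property of $\ov{\omega}$, this forces the identity \eqref{eq: hitting moment}, $H_{\ai}(\ov{\omega},\x_j)=H_j(\omega)+S^{(j-1)}(\omega)+H_0(\beta^{(j)},\x_j)$, which pins the first visit of $\x_j$ in $\ov{\omega}$ inside the inserted loop without any need for the signature-mismatch argument you invoke.
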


The proof is deferred to Section \ref{sec: proof of proposition rfs}.
The central idea of the proof involves identifying the set $B(\omega)$, followed by deletion of the loops.
\begin{proof}[Proof of \eqref{eq: injective mapping Psi} assuming Proposition \ref{prop: recover first step}]
    For an $(\F,\varepsilon,M_0)$-good path collection $\CA$, suppose that there exists $\omega, \omega' \in \CA$ such that $\Psi(\omega) \cap \Psi(\omega')\neq \emptyset$. Then there exists a path $\wt{\omega} \in \Psi(\omega) \cap \Psi(\omega')$.
    For simplicity, we let $n=|\R(\L_{\F}^{\ai}(\omega))|$ and $n'=|\R(\L_{\F}^{\ai}(\omega'))|$.
    By Lemma \ref{lem: recover second step}, there exists a path $\ov{\omega}$ such that $\ov{\omega}$ is both an \ext of $\omega^{(n)}$ and an \ext of $(\omega')^{(n')}$, which implies $\ov{\omega} \in \ov{\Psi}(\omega) \cap \ov{\Psi}(\omega')$.
.    We now apply Proposition \ref{prop: recover first step} to the path $\ov{\omega}$ and conclude that $\omega=\omega'$.
\end{proof}

\subsection{Proof of Proposition \ref{prop: recover first step}}
\label{sec: proof of proposition rfs}
In this subsection, we prove Proposition \ref{prop: recover first step}.
The main approach parallels the proof of Lemma \ref{lem: recover second step}, but demands more intricate technical arguments.
For the remainder of this subsection, we fix $\ov{\omega} \in \ov{\Psi}(\CA)$.
For ease of exposition, we formally define what it means for $\Obj(\omega)$ to be \emph{determined} by $\ov{\omega}$, where $\Obj(\cdot)$ is a mapping with domain $\CA$; see Definition \ref{def:determined} below for examples.
Note that the meaning of ``$\Obj(\omega)$ is determined by $\ov{\omega}$" in this subsection slightly differs from that of ``$\Obj(\omega)$ is determined by $\wt{\omega}$" in the previous subsection. 

\begin{definition}
\label{def:determined}
    For a path $\ov{\omega} \in \ov{\Psi}(\CA)$ and a mapping $\Obj(\cdot)$ with domain $\CA$ (e.g., $\L_{\F}^{\ai}(\cdot)$, $B(\cdot)$ in \eqref{eq: loops to be inserted in STEP1}, or the mapping that maps $\omega\in \CA$ to the loop insertion process described in \eqref{eq: inserting loops STEP 1} etc.), we say that $\Obj(\cdot)$ is \emph{determined} by $\ov{\omega}$ if $\Obj(\omega)$ is identical for all $\omega \in \CA$ satisfying $\ov{\omega} \in \ov{\Psi}(\omega)$.
    With slight abuse of notation, we also say that $\Obj(\omega)$ is determined by $\ov{\omega}$ if we assume $\ov{\omega} \in \ov{\Psi}(\omega)$ and the mapping $\Obj(\cdot)$ is determined by $\ov{\omega}$.
\end{definition}

Recall that at the beginning of Section \ref{sec: Loop insertion subsec}, we have fixed the path $\omega \in \CA$ where $\CA$ is an $(\F,\varepsilon,M_0)$-good path collection. Also recall that $B(\omega)$, defined in Definition \ref{def: loops to be inserted in STEP1}, is the set of loops inserted to construct $\omega^{(n)}$. 
We begin by showing that the set $B(\omega)$ can be identified.
    
\begin{lemma}
\label{lem: identify the loops inserted Psi}
    For all $\overline{\omega}\in \ov{\Psi}(\CA)$ and $\omega \in \CA$ such that $\ov{\omega} \in \ov{\Psi}(\omega)$, the loop collection $B(\omega)$ is determined by $\ov{\omega}$.  
\end{lemma}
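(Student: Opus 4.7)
The plan is to recover $B(\omega)$ from $\ov{\omega}$ in two stages: first identify the static late-point data, then use the spatial confinement and self-avoidance of the loops to extract each $\beta_{\x_j}(\omega)$.

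For the first stage, I would observe that for every $1\leq j\leq n$ the insertion time satisfies $H_j(\omega)+S^{(j-1)}(\omega)\geq H_j(\omega)=H_{\ai}(\omega,\near(\omega,\x_j))\geq \ai t_{\cov}\geq T_2$. Hence $\omega^{(n)}|_{T_2}=\omega|_{T_2}$, and since $\ov{\omega}$ extends $\omega^{(n)}$, also $\ov{\omega}|_{T_2}=\omega|_{T_2}$. Consequently the set $\L_{\F}^{\ai}(\omega)$, its auxiliary graph $G(\L_{\F}^{\ai}(\omega))$ and the set of representatives $\R(\L_{\F}^{\ai}(\omega))=\{\x_1,\ldots,\x_n\}$ are all determined by $\ov{\omega}$.

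For the second stage, I would invoke three structural properties of the loops in $B(\omega)$. By the construction in Section \ref{sec: Construction of loops to be inserted} together with the bound \eqref{no too large late island}, each loop $\beta_{\x_j}(\omega)$ is contained in the $l^\infty$-cube $Q(\x_j,l_N+2)$, since $d_\infty(\x_j,\near(\omega,\x_j))\leq \bs{r}_{\x_j}(\omega)\leq l_N$ (and the length-two alternative $\beta_i(\x_j)$ of type (d) a fortiori lies in that cube). By the definition \eqref{eq:lNdef} of the auxiliary graph, distinct representatives are at mutual $l^\infty$-distance strictly greater than $3l_N$, so the cubes $Q(\x_j,l_N+2)$ are pairwise disjoint for large $N$ and each contains exactly one root. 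Finally, by Lemma \ref{lem: loops no self intersecting} each $\beta_{\x_j}(\omega)$ is a self-avoiding cycle (except at its base vertex $\near(\omega,\x_j)$) passing through the late point $\x_j$. Combining these three facts reduces the task to finding, for each $j$, the unique self-avoiding sub-cycle of $\ov{\omega}$ that lies inside $Q(\x_j,l_N+2)$ and visits $\x_j$: its base vertex equals $\near(\omega,\x_j)$ and the cycle itself equals $\beta_{\x_j}(\omega)$, so comparison with the four templates of Figure \ref{fig: loops inserted in the first step} also recovers the type of $\bs{\Delta}=\near(\omega,\x_j)-\x_j$.

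The main obstacle will be ruling out spurious self-avoiding cycles that could arise when $\omega$ itself naturally re-enters the cube $Q(\x_j,l_N+2)$ (and possibly visits $\x_j$, since $\x_j\in\L^{\ai}(\omega)$ only prohibits visits in $[0,T_2]$, not in $(T_2,T_3]$). To deal with this, I would exploit the rigid local combinatorics imposed by the construction: the perpendicular detour steps of types (b)--(c) and the length-two excursion of type (d) force a very specific turning pattern at (or immediately after) $\x_j$ that cannot, in general, be produced by a generic random-walk segment. Coupled with the fact that the insertion of $\beta_{\x_j}$ is triggered precisely at the first visit of $\omega$ to $\near(\omega,\x_j)$ after $T_2$, this case-by-case argument should pin down the correct cycle among all candidates in $\ov{\omega}$ and therefore determine $B(\omega)$ unambiguously from $\ov{\omega}$.
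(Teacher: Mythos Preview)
Your first stage is correct and essentially identical to the paper's Lemma \ref{lem: identify the late points Psi}: since all insertions occur after $T_2$, one has $\ov{\omega}|_{T_2}=\omega|_{T_2}$, so $\L^{\ai}_{\F}(\omega)$ and $\R(\L^{\ai}_{\F}(\omega))$ are determined by $\ov{\omega}$.

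The second stage has a genuine gap. The assertion that there is a \emph{unique} self-avoiding sub-cycle of $\ov{\omega}$ through $\x_j$ inside the cube is false in general: after $T_2$ the original path $\omega$ may itself visit $\x_j$ and perform back-and-forth steps, each producing such a cycle. For instance, in type (d) take $\omega$ passing through $\x_j-e_1,\x_j,\x_j+e_2,\x_j$; after inserting $\beta_1(\x_j)$ the path reads $\x_j-e_1,\x_j,\x_j+e_1,\x_j,\x_j+e_2,\x_j$, exhibiting two length-two cycles through $\x_j$ with identical local combinatorics. Your proposed fix --- that the turning pattern ``cannot, in general, be produced by a generic random-walk segment'' --- is probabilistic language misapplied to what must be a deterministic injectivity argument valid for \emph{every} $\omega\in\CA$. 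And the final appeal to ``the first visit of $\omega$ to $\near(\omega,\x_j)$'' is circular: $\near(\omega,\x_j)$ is the base vertex of the very loop you are trying to identify.

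The paper resolves this by anchoring not at $\near(\omega,\x_j)$ but at $H_{\ai}(\ov{\omega},\x_j)$, the first visit of $\ov{\omega}$ to the late point $\x_j$ itself after $T_2$. Because $\x_j$ is $\ai$-late and lies in no other loop $\beta^{(j')}$ (by \eqref{eq: loops mutually disjoint}), this first visit is guaranteed to fall inside $\beta^{(j)}$; see \eqref{eq: hitting moment}. The direction pattern $\Dir(\ov{\omega},H_{\ai}(\ov{\omega},\x_j)+i)$ for $i\in\{-1,0,1\}$ then deterministically identifies the type (Lemmas \ref{lem: inherit local information} and \ref{lem: identify the case Psi}), and the type fixes the offset $\Extra(\x)\in\{0,2,4\}$ so that the loop is the segment of $\ov{\omega}$ between $H_{\ai}(\ov{\omega},\x_j)-k$ and $H_{\ai}(\ov{\omega},\x_j)+k+\Extra(\x)$ for the smallest $k$ equating the endpoints (Lemma \ref{lem: appch give the loop 22}); Lemma \ref{lem: loops no self intersecting} guarantees no smaller $k$ works. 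In the example above this picks out $(\x_j,\x_j+e_1,\x_j)$ and discards $(\x_j,\x_j+e_2,\x_j)$ precisely because the former occurs at the \emph{first} hit of $\x_j$.
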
  

To establish Lemma \ref{lem: identify the loops inserted Psi}, we begin by identifying the late points set $\L_{\F}^{\ai}(\omega)$.

\begin{lemma}
\label{lem: identify the late points Psi}
    For all $\overline{\omega}\in \ov{\Psi}(\CA)$ and $\omega \in \CA$ such that $\ov{\omega} \in \ov{\Psi}(\omega)$, the late points set $\L_{\F}^{\ai}(\omega)$ is determined by $\ov{\omega}$.
    In particular, $\R(\L_{\F}^{\ai}(\omega))$ is also determined by $\ov{\omega}$.
\end{lemma}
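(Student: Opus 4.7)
The plan is to observe that every loop inserted in the construction of $\omega^{(n)}$ is placed at a time strictly larger than $T_1 = \lfloor \alpha_N t_{\cov}\rfloor$, so the first $T_1$ steps of the path are preserved throughout the whole surgery. Since $\L^{\alpha_N}(\omega)$ depends only on $\omega[0,T_1]$, it can therefore be read off directly from $\ov{\omega}$.

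More precisely, I would first show by induction on $j$ that $\omega^{(j)}[0,T_1] = \omega[0,T_1]$ for every $1 \le j \le n$. The base case $j = 0$ is trivial. For the inductive step, recall from Definition \ref{def: loops to be inserted in STEP1} that $\beta^{(j)}$ is inserted into $\omega^{(j-1)}$ at time $H_j(\omega) + S^{(j-1)}(\omega)$. By the definition of $H_j(\omega) = H_{\beta_N}(\omega, \near(\omega,\x_j))$ in \eqref{eq: def: hitting time after lambda}, we have $H_j(\omega) \ge \beta_N t_{\cov} \ge T_2 > T_1$, and $S^{(j-1)}(\omega) \ge 0$, so the insertion site lies strictly after $T_1$. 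Hence $\insmap$ leaves the first $T_1$ entries untouched, completing the induction. In particular $\omega^{(n)}[0,T_1] = \omega[0,T_1]$, and since $\ov{\omega}$ is an extension of $\omega^{(n)}$ with $L(\omega^{(n)}) \ge T_3 > T_1$, we also have $\ov{\omega}[0,T_1] = \omega[0,T_1]$.

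From this, the conclusion is immediate: by the definition of $\L^{\alpha_N}$ as the set of sites not visited by the walk up to time $T_1$, one has
\begin{equation*}
    \L^{\alpha_N}(\omega) \;=\; \T_N \setminus \omega[0,T_1] \;=\; \T_N \setminus \ov{\omega}[0,T_1],
\end{equation*}
which depends only on $\ov{\omega}$. Intersecting with the fixed set $\F$ shows that $\L^{\alpha_N}_{\F}(\omega)$ is determined by $\ov{\omega}$. Finally, $\R(\L^{\alpha_N}_{\F}(\omega))$ is a deterministic function of $\L^{\alpha_N}_{\F}(\omega)$ (via the auxiliary graph $G(\cdot)$ from \eqref{eq:lNdef} and the fixed ordering of $\T_N$ introduced below \eqref{eq:lNdef}), so it too is determined by $\ov{\omega}$. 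There is no real obstacle here beyond verifying the timing inequality $H_j(\omega)+S^{(j-1)}(\omega) > T_1$; this is precisely why the strategy of Section \ref{sec: outline and discussion} schedules all surgery after $T_2$ rather than after $T_1$.
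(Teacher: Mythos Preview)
Your approach is essentially the same as the paper's, but you have misread the macro $\ai$: in this paper $\ai = \beta_N$, not $\alpha_N$ (while $\g = \alpha_N$). Thus the lemma is about $\L_{\F}^{\beta_N}(\omega)$, which depends on $\omega[0,T_2]$, not on $\omega[0,T_1]$. Your own induction already shows that each insertion time $H_j(\omega)+S^{(j-1)}(\omega)$ exceeds $T_2$ (you write $H_j(\omega)\ge \beta_N t_{\cov}\ge T_2$), so the identical argument with $T_2$ in place of $T_1$ gives $\ov{\omega}[0,T_2]=\omega[0,T_2]$ and hence $\L_{\F}^{\beta_N}(\omega)=\L_{\F}^{\beta_N}(\ov{\omega})$, exactly as in the paper's proof. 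Once you make that substitution, your proof matches the paper's.
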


\begin{proof}
    Note that the insertion of loops occurs only after $T_2$ (by definition of $\Psi$), implying $\omega=\omega^{(n)} \mid_{T_2}$.
    Since $\ov{\omega}$ is an \ext of $\omega^{(n)}$, it follows that $\omega=\ov{\omega} \mid_{T_2}$.
    Therefore, $\L_{\F}^{\ai}(\omega)=\L_{\F}^{\ai}(\overline{\omega})$, which implies Lemma \ref{lem: identify the late points Psi}.
\end{proof}

After identifying the set $\R(\L_{\F}^{\ai}(\omega))$, we only need to identify $\beta_{\x}(\omega)$, the loop associated with each vertex $\x \in \R(\L_{\F}^{\ai}(\omega))$.
This process will be divided into two steps.
First, we identify the type of $\x$, as demonstrated in Lemma \ref{lem: identify the case Psi}. 
Next, assuming that the type of $\x$ is known, we identify $\beta_{\x}(\omega)$ in Lemma \ref{lem: identify the loop with case Psi}.

\smallskip
Recall the definition of the type of $\x$, introduced at the beginning of Section \ref{sec: Construction of loops to be inserted}.

\begin{lemma}
\label{lem: identify the case Psi}
    For all $\overline{\omega}\in \ov{\Psi}(\CA)$ and $\omega \in \CA$ such that $\ov{\omega} \in \ov{\Psi}(\omega)$, the type of each vertex $\x \in \R(\L_{\F}^{\ai}(\omega))$ is determined by $\ov{\omega}$.
\end{lemma}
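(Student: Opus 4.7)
The plan is to show that the type of each $\x \in \R(\L_{\F}^{\ai}(\omega))$ is encoded in a bounded window of $\ov{\omega}$ around the time
\[
\tau_{\x} := \inf\{k \geq T_2 : \ov{\omega}(k) = \x\}.
\]
Everything needed to identify this window is already determined by $\ov{\omega}$: the time $\tau_{\x}$ depends only on $\ov{\omega}$ and $\x$; by Lemma \ref{lem: identify the late points Psi} the set $\L_{\F}^{\ai}(\omega)$ is determined, so the component $\sfC_{\x}$ of $\x$ in $G(\L_{\F}^{\ai}(\omega))$ and hence the DFS-based loop $\beta^{\Tree}(\sfC_{\x},\x)$ with length $L':=L(\beta^{\Tree}(\sfC_{\x},\x))$ are all determined.

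Next I would verify that $\ov{\omega}$ traces $\beta^{\Tree}(\sfC_{\x},\x)$ exactly on the interval $[\tau_{\x},\tau_{\x}+L']$. By construction of $\Psi$, the second-step loop at $\x$ is spliced into $\omega^{(n+j-1)}$ precisely at the first visit of $\omega^{(n)}$ to $\x$ after $T_2$, and since neither earlier loops nor later second-step loops can reach $\x$ (distinct clusters of $G(\L_{\F}^{\ai}(\omega))$ are $>3l_N$ apart while each inserted loop lives in a $(l_N+O(1))$-neighbourhood of its root by \eqref{no too large late island}), this first visit in $\omega^{(n+j-1)}$ coincides with $\tau_{\x}$ in $\ov{\omega}$.

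Having localised $\beta^{\Tree}(\sfC_{\x},\x)$, the type is then extracted from the step entering $\tau_{\x}$ and a few steps right after $\tau_{\x}+L'$. Case (d) is detected by the single test
\[
\ov{\omega}(\tau_{\x}+L'+2) \stackrel{?}{=} \x,
\]
since the construction places $\beta_i(\x)=(\x,\x+e_i,\x)$ right after the second-step loop in this case (with $i=\Dir(\omega,H_{\ai}(\omega,\x)-1)$ readable from $\ov{\omega}(\tau_{\x})-\ov{\omega}(\tau_{\x}-1)$), whereas in cases (a), (b), (c) the walk enters the $\x\to\y$ segment, whose $l^1$-distance to $\x$ strictly increases at every step, and so cannot return to $\x$ after two steps. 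If this test fails, I distinguish (a), (b), (c) by examining the triple $\ov{\omega}(\tau_{\x}+L'+k)$ for $k=1,2,3$: the explicit pictures in Figure \ref{fig: loops inserted in the first step} show that type (c) uniquely opens with the pattern $(+e_2,+e_1,\pm e_d)$ (where $d\geq 3$ is used), type (b) with parameter $i$ uniquely opens with $(+e_{i+1},\pm e_i)$, and type (a) is identified as the residual, characterised by the first step $\sgn(\Delta_{i_1})e_{i_1}$ together with the absence of either detour.

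The main obstacle I anticipate is the verification that the signatures in the final step are genuinely disjoint for all admissible parameters. The critical point is that the extra coordinates $e_{i+1}$ (for (b)) and $e_1,e_2$ (for (c)) were designed precisely so that the $\x\to\y$ segment visits a coordinate outside $\mathrm{supp}(\bs{\Delta})$ in a direction incompatible with type (a), whose $\x\to\y$ path moves monotonically through the nonzero coordinates of $\bs{\Delta}$; combining this with $d\geq 3$ reduces the verification to a short finite case check on the subcases $i_1=1,2$ and $|\Delta_{i_1}|=1$ versus $\geq 2$.
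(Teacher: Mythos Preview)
Your argument rests on a misreading of the object $\ov{\omega}$. By definition (just above Proposition~\ref{prop: recover first step}), $\ov{\Psi}(\omega)$ is the set of extensions of $\omega^{(n)}$, not of $\omega^{(2n)}$: the second-step tree loops have \emph{already been removed} via Lemma~\ref{lem: recover second step} before one reaches this lemma. Hence $\ov{\omega}$ contains only the first-step loops $\beta_{\x}(\omega)$, and there is no $\beta^{\Tree}(\sfC_{\x},\x)$ to localise; your offset $L'$ is simply absent, and the tests you propose at times $\tau_{\x}+L'+k$ are looking at the wrong window whenever $|\sfC_{\x}|>1$.

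If you set $L'=0$ throughout, your strategy collapses to something close to the paper's proof, which reads the local picture of $\beta_{\x}$ at $\x$ directly from $\ov{\omega}$ at times $\tau_{\x}-1,\tau_{\x},\tau_{\x}+1$. The paper packages this as Lemma~\ref{lem: inherit local information}, which transfers $\Dir(\beta^{(j)},H_0(\beta^{(j)},\x_j)+i)$ for $i\in\{-1,0,1\}$ to $\Dir(\ov{\omega},H_{\ai}(\ov{\omega},\x_j)+i)$, and then distinguishes the four types via the ordered comparison of these three direction indices: type (d) iff $\Dir$ is unchanged across $\tau_{\x}$; among the remaining cases, (b) iff $\Dir$ increases at $\tau_{\x}$, and (a) versus (c) is decided by whether $\Dir$ weakly increases or strictly decreases at $\tau_{\x}+1$. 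This avoids the signed-step case analysis you anticipated as the main obstacle.
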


Note that the type of each vertex $\x \in \R(\L_{\F}^{\ai}(\omega))$ may depend on the path $\omega$, as $\near(\omega,\x)$ is defined in terms of $\omega$. Recall that we have given each vertex in $\R(\L_{\F}^{\ai}(\omega))$ a label $j \in [1,n]$ (possibly depending on $\omega$) above \eqref{eq: number of roots}.
To prove Lemma \ref{lem: identify the case Psi}, we first show that the local structure of $\beta^{(j)}$ near $\x_j$, where $\x_j \in \R(\L_{\F}^{\ai}(\omega))$, can be deduced from $\ov{\omega}$.
Recall the definition of $\Dir(\omega,k)$ above \eqref{eq: loops to be inserted in STEP1}. 

\begin{lemma}
\label{lem: inherit local information}
    Suppose $\omega \in \CA$ and $\ov{\omega} \in \overline{\Psi}(\omega)$. 
    For all $\x_j \in \R(\L_{\F}^{\ai}(\omega))$: 
    \begin{enumerate}[label=(\roman*)]
        \item If the type of $\x_j$ is (d), we have 
        \begin{equation}
        \label{eq: case iv)}
            \Dir(\overline{\omega},H_{\ai}(\overline{\omega},\x_j))=\Dir(\overline{\omega},H_{\ai}(\overline{\omega},\x_j)-1).
        \end{equation}
        \item If the type of $\x_j$ is not (d), we have
        \begin{equation}
            \label{eq: path to loop inseted}
            \Dir(\overline{\omega},H_{\ai}(\overline{\omega},\x_j)+i)=\Dir(\beta^{(j)},H_0(\beta^{(j)},\x_j)+i), \quad i\in\{0,\pm 1\}.
        \end{equation}
    \end{enumerate}
\end{lemma}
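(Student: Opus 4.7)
The plan is to locate $H_{\ai}(\overline{\omega}, \x_j)$ precisely within the shifted time axis of $\omega^{(n)}$, and then to read off the local directional information directly from the inserted loop $\beta^{(j)}$. Since $\overline{\omega}$ extends $\omega^{(n)}$ and every insertion happens at some shifted time $H_{j'}(\omega) + S^{(j'-1)}(\omega)$ with $H_{j'}(\omega) > T_2$, the paths $\overline{\omega}$ and $\omega$ coincide on $[0, T_2]$. Therefore it suffices to identify when $\omega^{(n)}$ first visits $\x_j$ after $T_2$.

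The crucial intermediate step is a separation lemma: for any $j' \neq j$, the loop $\beta^{(j')}$ does not contain $\x_j$. Indeed, distinct representatives in $\R(\L_{\F}^{\ai}(\omega))$ belong to different components of the auxiliary graph $G(\L_{\F}^{\ai}(\omega))$, so $d_\infty(\x_j, \x_{j'}) > 3 l_N$ by the very definition of that graph. On the other hand, the explicit constructions in Section \ref{sec: Construction of loops to be inserted} place every vertex of $\beta^{(j')}$ within $l^\infty$-distance at most $\bs{r}_{\x_{j'}}(\omega) + 2$ of $\x_{j'}$, which is at most $l_N + 2$ by \eqref{no too large late island}. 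Together with the fact that $\x_j \in \L_{\F}^{\ai}(\omega)$ is not visited by $\omega$ on $[0, T_2]$, this forces the first visit of $\x_j$ in $\omega^{(n)}$ after $T_2$ to lie inside $\beta^{(j)}$, at the position $t_0 + H_0(\beta^{(j)}, \x_j)$, where $t_0 \overset{\text{def.}}{=} H_j(\omega) + S^{(j-1)}(\omega)$ is the insertion index of $\beta^{(j)}$. Hence $H_{\ai}(\overline{\omega}, \x_j) = t_0 + H_0(\beta^{(j)}, \x_j)$.

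For type (d), $\near(\omega, \x_j) = \x_j$ gives $H_0(\beta^{(j)}, \x_j) = 0$ and $H_{\ai}(\overline{\omega}, \x_j) = t_0$, and by construction $\beta^{(j)} = \beta_i(\x_j) = (\x_j, \x_j + e_i, \x_j)$ with $i = \Dir(\omega, H_j(\omega) - 1)$. A short bookkeeping check shows $\omega^{(n)}(t_0 - 1) = \omega(H_j(\omega) - 1)$ (including the edge case where $\beta^{(j-1)}$ ends at $t_0 - 1$), so the three consecutive positions $t_0 - 1, t_0, t_0 + 1$ of $\overline{\omega}$ read $\omega(H_j(\omega) - 1), \x_j, \x_j + e_i$ respectively, forcing both $\Dir(\overline{\omega}, t_0 - 1)$ and $\Dir(\overline{\omega}, t_0)$ to equal $i$; this is \eqref{eq: case iv)}. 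For the non-(d) types, $\near(\omega, \x_j) \neq \x_j$ together with the definition of $\near$ as the closest point of $\omega(T_2, T_3]$ to $\x_j$ forces $\x_j \notin \omega(T_2, T_3]$, so the first visit of $\x_j$ in $\omega^{(n)}$ after $T_2$ sits strictly inside $\beta^{(j)}$. The identity $\omega^{(n)}(t_0 + k) = \beta^{(j)}(k)$ for $0 \leq k \leq L(\beta^{(j)})$ then transfers local directions directly from $\beta^{(j)}$ to $\overline{\omega}$; inspecting the explicit loop lengths shows $1 \leq H_0(\beta^{(j)}, \x_j) \leq L(\beta^{(j)}) - 2$ in types (a), (b), (c), so $H_0(\beta^{(j)}, \x_j) + i \in [0, L(\beta^{(j)}) - 1]$ for each $i \in \{-1, 0, 1\}$, yielding \eqref{eq: path to loop inseted}.

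The main conceptual obstacle is the separation step: without the $3 l_N$ buffer guaranteed by the auxiliary graph construction, an earlier loop $\beta^{(j')}$ could accidentally visit $\x_j$ and shift the first-visit time, breaking the explicit identification of local directions. Once separation is in place, the remainder is routine index tracking and a direct inspection of the loop templates from Section \ref{sec: Construction of loops to be inserted}.
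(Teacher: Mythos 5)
Your proof is correct and follows essentially the same approach as the paper's: first establish that $\x_j \notin \beta^{(j')}$ for $j' \neq j$ via the $3l_N$ separation of representatives combined with the bounded extent of each loop around its own representative, then derive the hitting-time identity $H_{\ai}(\overline{\omega},\x_j) = H_j(\omega) + S^{(j-1)}(\omega) + H_0(\beta^{(j)},\x_j)$ (which is \eqref{eq: hitting moment}), and finally transfer local directional information from $\beta^{(j)}$ to $\overline{\omega}$. You spell out the index-range checks for types (a), (b), (c) and the type-(d) bookkeeping in somewhat more detail than the paper does, but the structure, the key separation step, and the resulting hitting-time identity are identical.
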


\begin{proof}
    Recall $n=|\R(\L_{\F}^{\ai}(\omega))|$ in \eqref{eq: number of roots} 
    and the definitions of $H_j(\omega)$, $S^{(j)}(\omega)$ and $\omega^{(j)}$ in \eqref{eq: inserting loops STEP 1} for $1 \leq j \leq n$.
    We first show that loops in $B(\omega)$ (see \eqref{eq: loops to be inserted in STEP1}) are mutually disjoint. Recall that $d_{\infty}(\x,\near(\omega,\x))\leq l_n$ for each $\x \in \R(\L^{\ai}_{\F}(\omega))$ and $d_\infty(\x,\x')>3l_N$ for all $\x\neq \x'\in \R(\L^{\ai}_{\F}(\omega))$. Moreover, for all $\z \in \beta(\x,\y)$, we have $d_\infty(\x,\z) \leq d_{\infty}(\x,\y)+1$. Therefore, we obtain:  
    \begin{equation}
    \label{eq: loops mutually disjoint}
       \mbox{Loops in $B(\omega)$ are mutually disjoint.}
    \end{equation}
    In particular, we have
    $\x_j \notin \beta^{(j')}$ for all $j' \neq j$, which implies that the path $\omega^{(j)}$ hits $\x_j$ in the part corresponding to $\beta^{(j)}$.
    Furthermore, when $j'>j$, since the loop $\beta^{(j')}$ is inserted after $H_j(\omega) +S^{(j)}(\omega)$, the moment when the part of the loop $\beta^{(j)}$ terminates, the part of the path before $H_j(\omega) +S^{(j)}(\omega)$ remains unchanged.
    Thus, by the definition of extension, 
    we obtain
    \begin{equation}
        \label{eq: hitting moment}
        H_{\ai}(\overline{\omega},\x_j)=H_j(\omega)+S^{(j-1)}(\omega)+H_0(\beta^{(j)},\x_j).
    \end{equation}
    It is straightforward to verify \eqref{eq: case iv)} using \eqref{eq: loops to be inserted in STEP1} and \eqref{eq: inserting loops STEP 1}. 
    \eqref{eq: hitting moment} directly implies \eqref{eq: path to loop inseted} by observing that the loop $\beta^{(j)}$ has at least one step before hitting $\x_j$ and at least two steps after hitting $\x_j$.
\end{proof}

We now prove Lemma \ref{lem: identify the case Psi} with Lemma \ref{lem: inherit local information}. 
In fact, we can identify the type of $\x \in \R(\L_{\F}^{\ai}(\omega))$ from $\Dir(\overline{\omega},H_{\ai}(\overline{\omega},\x)+i)$, $i=0,\pm 1$, i.e.\ the red parts in Figure \ref{fig: loops inserted in the first step}. Recall the definition of the loop $\beta_{\x}(\omega)$ associated with the vertex $\x$ in \eqref{eq: loops to be inserted in STEP1}.

\begin{proof}[Proof of Lemma \ref{lem: identify the case Psi}]
    Suppose that $\x$ belongs to $\R(\L_{\F}^{\ai}(\omega))$ (which is determined by $\ov{\omega}$).
    From the definition of the loops $\beta(\x,\y)$ when the type of $\bs{\Delta}=\y-\x$ is (a), (b) and (c) (see also Figure \ref{fig: loops inserted in the first step}), the following hold (for simplicity, write $\beta=\beta_{\x}(\omega)$ and $H_0=H_{0}(\beta_{\x}(\omega),\x)$ for short):
    \begin{itemize}
        \item If the type of $\x$ is (a), we have $\Dir(\beta,H_{0}-1)>\Dir(\beta,H_{0})$ and $\Dir(\beta,H_{0})\leq \Dir(\beta,H_0+1)$.
        \item If the type of $\x$ is (b), we have $\Dir(\beta,H_{0}-1)<\Dir(\beta,H_{0})$.
        \item If the type of $\x$ is (c), we have $\Dir(\beta,H_0-1)>\Dir(\beta,H_0)$ and $\Dir(\beta,H_0) > \Dir(\beta,H_{0}+1)$.
    \end{itemize}
    Combining the observations above with \eqref{eq: path to loop inseted} and \eqref{eq: case iv)}, the type of $\x$ can be identified as follows (for simplicity, write $\ov{H}=H_{\ai}(\overline{\omega},\x)$ for short), for instance,
    \begin{itemize}
        \item If $\Dir(\overline{\omega},\ov{H}-1)>\Dir(\overline{\omega},\ov{H})$ and $\Dir(\overline{\omega},\ov{H})\leq \Dir(\overline{\omega},\ov{H}+1)$, then the type of $\x$ is (a).
    \end{itemize}
    Since the set $\R(\L_{\F}^{\ai}(\omega))$ and the criteria above are determined by $\ov{\omega}$, it follows that the type of each $\x \in \R(\L_{\F}^{\ai}(\omega))$ is also determined by $\ov{\omega}$.
\end{proof}

We now turn to the second step: identifying the loop $\beta_{\x}(\omega)$ under the assumption of the type of $\x$. 

\begin{lemma}
\label{lem: identify the loop with case Psi}
     For all $\overline{\omega}\in \ov{\Psi}(\CA)$, if $\omega \in \CA$ satisfies $\ov{\omega} \in \ov{\Psi}(\omega)$ and $\x \in \R(\L_{\F}^{\ai}(\omega))$, the loop $\beta_{\x}(\omega)$ is determined by $\overline{\omega}$. 
\end{lemma}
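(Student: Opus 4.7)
The plan is to reconstruct $\beta_\x(\omega)$ directly from the step sequence of $\ov{\omega}$ in a window around the hitting time $H_{\ai}(\ov{\omega},\x)$, which by \eqref{eq: hitting moment} and the mutual disjointness established in \eqref{eq: loops mutually disjoint} lies inside the inserted loop $\beta_\x(\omega)$. Since Lemma \ref{lem: identify the case Psi} already identifies the type of $\x$ from $\ov{\omega}$, the argument splits into the four types.

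Case (d) is immediate: one has $\near(\omega,\x)=\x$ and $\beta_\x(\omega)=\beta_i(\x)$ with $i=\Dir(\omega,H_{\ai}(\omega,\x)-1)$, and by \eqref{eq: case iv)} this coincides with $\Dir(\ov{\omega},H_{\ai}(\ov{\omega},\x))$, which is read directly off $\ov{\omega}$. For cases (b) and (c), the plan is to exploit the rigid bracketed form of the $\x\to\y$ half of $\beta(\x,\y)$: after $\x$ the path takes a distinctive transverse step ($e_{i+1}$ in type (b); $e_2$ then $e_1$ in type (c)), then a monotone run of $|\Delta_i|$ (resp.\ $|\Delta_d|$) steps in direction $-e_i\sgn(\Delta_i)$ (resp.\ $-e_d\sgn(\Delta_d)$), and finally the reverse transverse step(s). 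Reading $\ov{\omega}$ forward from $H_{\ai}(\ov{\omega},\x)$, we identify the index $i$ (resp.\ $d$) from the first transverse step via \eqref{eq: path to loop inseted}, count the length of the ensuing monotone block until it is capped off by the reverse transverse step, and thereby recover $\y$; the loop $\beta_\x(\omega)=\beta(\x,\y)$ is then fully determined from $\ov{\omega}$.

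The main obstacle lies in case (a), where the $\x\to\y$ half consists of several blocks of equal steps in strictly increasing coordinate indices $i_1<i_2<\cdots<i_K$ without the distinctive bracketing steps available in cases (b) and (c); the first step of $\omega$ out of $\y$ could in principle continue the increasing-index pattern and fool a naive forward parser into overshooting. The plan to resolve this is to parse $\ov{\omega}$ simultaneously forward and backward from $H_{\ai}(\ov{\omega},\x)$ and use the symmetric greedy structure of $\beta(\x,\y)$: both the $\y\to\x$ half (read backward) and the $\x\to\y$ half (read forward) decompose into the same block sequence $(i_k,|\Delta_{i_k}|)_{k=1}^K$ with reversed directions, so the concatenation is forced to be a self-avoiding loop (by Lemma \ref{lem: loops no self intersecting}) ending at its starting vertex $\y$. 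Uniqueness is then structural: any $\y$ yielding a loop matching $\ov{\omega}$ around $H_{\ai}(\ov{\omega},\x)$ must be the common endpoint of that matched segment, and existence (namely $\y=\near(\omega,\x)$) follows from $\ov{\omega}\in\ov{\Psi}(\omega)$. Combining the four cases identifies $\beta_\x(\omega)$ as a function of $\ov{\omega}$ alone.
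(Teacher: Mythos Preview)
Your treatment of type (d) is correct and essentially the same as the paper's. Your forward-only parsers for types (b) and (c) are different from the paper's approach but valid: the transverse bracketing step(s) unambiguously terminate the monotone run, so $|\Delta_i|$ (resp.\ $|\Delta_d|$) and hence $\y$ are read off from $\ov\omega$ alone.

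For type (a), however, there is a genuine gap. First, the block-sequence claim is wrong: reading the $\y\to\x$ half \emph{backward} from $\x$ produces blocks in order $i_K,i_{K-1},\ldots,i_1$, whereas the $\x\to\y$ half read \emph{forward} gives $i_1,i_2,\ldots,i_K$; the orders are reversed, and the step directions (when both are read out of $\x$) in fact agree. So the two scans do not produce the ``same block sequence'', and no block-matching parser is available. Second, and more seriously, you never specify a concrete stopping rule for the simultaneous scan, and your uniqueness claim (``any $\y$ yielding a loop matching $\ov\omega$ around $H$ must be the common endpoint of that matched segment'') is circular: different candidate endpoints $\y'$ produce matched segments of different lengths, and nothing in your argument rules out multiple such $\y'$.

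The paper fixes this with a single unified rule for all non-(d) types. From the construction one has $H_0(\beta_\x,\x)=L(\beta_\x)-H_0(\beta_\x,\x)-\Extra(\x)$ with $\Extra(\x)\in\{0,2,4\}$ in types (a),(b),(c). Hence $\beta_\x(\omega)$ is exactly the segment of $\ov\omega$ between times $H-k$ and $H+k+\Extra(\x)$, where $H=H_{\ai}(\ov\omega,\x)$ and $k$ is the \emph{smallest} positive integer with $\ov\omega(H-k)=\ov\omega(H+k+\Extra(\x))$. Existence of such $k$ (namely $k=H_0(\beta_\x,\x)$) is immediate; minimality follows from Lemma~\ref{lem: loops no self intersecting}, since for $0<k'<H_0(\beta_\x,\x)$ the two points lie in the disjoint sets $\beta_\x[1,H_0-1]$ and $\beta_\x[H_0+1,L(\beta_\x)-1]$. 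This minimal-$k$ criterion is the missing ingredient in your case (a), and it also subsumes your ad hoc forward parsers for (b) and (c).
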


To prove Lemma \ref{lem: identify the loop with case Psi}, we provide an approach for identifying the loop $\beta_{\x}(\omega)$, as described in the following lemma.
For convenience, for each $\x \in \R(\L_{\F}^{\ai}(\omega))$ such that the type of $\x$ is not (d), we let 
\begin{equation}
    \Extra(\x)=
    \begin{cases}
        0, \quad \mbox{if the type of $\x$ is (a)};\\
        2, \quad \mbox{if the type of $\x$ is (b)};\\
        4, \quad \mbox{if the type of $\x$ is (c)}.\\
    \end{cases}
\end{equation}

 \begin{lemma}
 \label{lem: appch give the loop 22}
     For all $\overline{\omega}\in \ov{\Psi}(\CA)$ and $\omega \in \CA$ such that $\ov{\omega} \in \ov{\Psi}(\omega)$, and for all $\x \in \R(\L_{\F}^{\ai}(\omega))$, we have $\beta_{\x}(\omega)=\wt{\beta}_{\x}(\omega)$, where
     \begin{enumerate}[label=(\roman*)]
         \item If the type of $\x$ is (d), the loop $\wt{\beta}_{\x}(\omega)$ is given by $$(\overline{\omega}(H_{\ai}(\overline{\omega},\x)),\overline{\omega}(H_{\ai}(\overline{\omega},\x)+1),\overline{\omega}(H_{\ai}(\overline{\omega},\x)+2)).$$
         \item If the type of $\x$ is not (d), the loop $\wt{\beta}_{\x}(\omega)$ consists of the part of  $\overline{\omega}$ between times $H_{\ai}(\overline{\omega},\x)-k$ and $H_{\ai}(\overline{\omega},\x)+k+\Extra(\x)$, where $k$ is the smallest positive integer satisfying $H_{\ai}(\overline{\omega},\x)-k=H_{\ai}(\overline{\omega},\x)+k+\Extra(\x)$.
     \end{enumerate}
\end{lemma}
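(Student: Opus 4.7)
The plan is to locate $\beta^{(j)}:=\beta_{\x_j}(\omega)$ as a contiguous subpath of $\overline{\omega}$, anchored at the first post-$\ai t_{\cov}$ visit to $\x_j$, namely $t^*:=H_{\ai}(\overline{\omega},\x_j)$. Two observations are crucial. First, since $\overline{\omega}$ extends $\omega^{(n)}$ (and not $\omega^{(2n)}$), none of the Tree-loops from $B^{\Tree}(\omega)$ appear in $\overline{\omega}$. Second, by \eqref{eq: inserting loops STEP 1} together with the mutual disjointness \eqref{eq: loops mutually disjoint}, every later insertion $\beta^{(j')}$ with $j'>j$ occurs strictly after $\beta^{(j)}$ ends, and $\x_{j'}\notin\beta^{(j)}$ for $j'\neq j$. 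Hence $\beta^{(j)}$ occupies the contiguous interval $[H_j(\omega)+S^{(j-1)}(\omega),\,H_j(\omega)+S^{(j)}(\omega)]$ in $\omega^{(n)}$, and therefore in any of its extensions. Writing $H_0:=H_0(\beta^{(j)},\x_j)$, the identity \eqref{eq: hitting moment} yields $t^*=H_j(\omega)+S^{(j-1)}(\omega)+H_0$, identifying $\x_j$ as the midpoint of $\beta^{(j)}$ inside $\overline{\omega}$.

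For case (i) (type (d)), the loop $\beta_{\x_j}(\omega)=\beta_i(\x_j)=(\x_j,\x_j+e_i,\x_j)$ has $H_0=0$ and length $2$, so the three consecutive entries $\overline{\omega}(t^*),\overline{\omega}(t^*+1),\overline{\omega}(t^*+2)$ reproduce exactly $\beta_{\x_j}(\omega)$, proving (i).

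For case (ii) (types (a), (b), (c)), one has $L(\beta^{(j)})=2H_0+\Extra(\x_j)$, with the $\y_j\to\x_j$ and $\x_j\to\y_j$ parts of respective lengths $H_0$ and $H_0+\Extra(\x_j)$, where $\y_j:=\near(\omega,\x_j)$. Thus the window $\overline{\omega}[t^*-H_0,\,t^*+H_0+\Extra(\x_j)]$ is precisely $\beta^{(j)}$, and its two endpoints both equal $\y_j$; in particular $k=H_0$ satisfies the intended matching condition $\overline{\omega}(t^*-k)=\overline{\omega}(t^*+k+\Extra(\x_j))$ (note that the literal equation $t^*-k=t^*+k+\Extra(\x_j)$ as written in the statement admits no positive integer solution, so I read the condition in the natural way that makes the claim meaningful). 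To verify minimality, for $1\leq k'<H_0$ one has $\overline{\omega}(t^*-k')=\beta^{(j)}(H_0-k')\in\beta^{(j)}[1,H_0-1]$ and $\overline{\omega}(t^*+k'+\Extra(\x_j))=\beta^{(j)}(H_0+k'+\Extra(\x_j))\in\beta^{(j)}[H_0+1,L(\beta^{(j)})-1]$; Lemma \ref{lem: loops no self intersecting} ensures these two interior segments are disjoint, so the two vertices differ, ruling out any smaller positive $k$.

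The main subtle point is confirming contiguity of $\beta^{(j)}$ inside $\overline{\omega}$ despite the cascade of insertions in \eqref{eq: inserting loops STEP 1}; once this is secured via \eqref{eq: loops mutually disjoint} (and the absence of Tree-loops at this stage), the rest is combinatorial and driven by the self-avoidance property recorded in Lemma \ref{lem: loops no self intersecting}.
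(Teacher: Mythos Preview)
Your proof is correct and follows essentially the same approach as the paper: both rely on \eqref{eq: hitting moment}, the identity $L(\beta_{\x})=2H_0+\Extra(\x)$, and Lemma~\ref{lem: loops no self intersecting} for the minimality of $k$, with case (i) being immediate from the definition. Your remark that the condition in (ii) should read $\overline{\omega}(H_{\ai}(\overline{\omega},\x)-k)=\overline{\omega}(H_{\ai}(\overline{\omega},\x)+k+\Extra(\x))$ rather than an equality of time indices is well taken; this is indeed the only sensible reading and is consistent with how the paper uses the lemma.
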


\begin{proof}[Proof of Lemma \ref{lem: identify the loop with case Psi} assuming Lemma \ref{lem: appch give the loop 22}]
    By Lemma \ref{lem: identify the case Psi}, the type of each $\x\in\R(\L_{\F}^{\ai}(\omega))$ is determined by $\ov{\omega}$.
    Thus, the loop $\wt{\beta}_{\x}(\omega)$, as defined in Lemma \ref{lem: appch give the loop 22}, is also determined by $\ov{\omega}$.
    Since Lemma \ref{lem: appch give the loop 22} establishes that $\beta_{\x}(\omega)=\wt{\beta}_{\x}(\omega)$, it follows that $\beta_{\x}(\omega)$ is determined by $\overline{\omega}$, which is exactly the claim of Lemma \ref{lem: identify the loop with case Psi}. 
\end{proof}

We now prove Lemma \ref{lem: appch give the loop 22}.
The statement in Lemma \ref{lem: appch give the loop 22}(i) follows directly from the definition of $B(\omega)$ and \eqref{eq: inserting loops STEP 1}.
We now focus on the case that the type of $\x$ is not (d). We refer readers to Figure \ref{fig: loops inserted in the first step} for reference.

\begin{proof}[Proof of Lemma \ref{lem: appch give the loop 22}(ii)]
    For all $\x \in \R(\L_{\F}^{\ai}(\omega))$ such that the type of $\x$ is not (d), observe that 
    \begin{equation*}
        H_{0}(\beta_{\x}(\omega),\x)=L(\beta_{\x}(\omega))-H_{0}(\beta_{\x}(\omega),\x)-\Extra(\x)\quad \mbox{for all }\x \in \R(\L_{\F}^{\ai}(\omega)).
    \end{equation*}
    Combining this with Lemma \ref{lem: loops no self intersecting} and \eqref{eq: hitting moment} establishes the statement in Lemma \ref{lem: appch give the loop 22} (ii).
\end{proof}

We now derive Lemma \ref{lem: identify the loops inserted Psi} from Lemmas \ref{lem: identify the late points Psi} and \ref{lem: identify the loop with case Psi}.

\begin{proof}[Proof of Lemma \ref{lem: identify the loops inserted Psi}]
    Suppose $\omega$ is a path in $\CA$ such that $\overline{\omega} \in \overline{\Psi}(\omega)$.
    From Lemma \ref{lem: identify the late points Psi}, it follows that $\R(\L_{\F}^{\ai}(\omega))$ is determined by $\ov{\omega}$.
    Additionally, by Lemma \ref{lem: identify the loop with case Psi}, we know that for all $\x \in \R(\L_{\F}^{\ai}(\omega))$,  $\beta_{\x}(\omega)$ is determined by $\ov{\omega}$.
    Therefore, the loop collection $B(\omega)$ (see \eqref{eq: loops to be inserted in STEP1} for definition) is determined by $\ov{\omega}$.
\end{proof}

We now turn to the proof of Proposition \ref{prop: recover first step}.
As in the proof of Lemma \ref{lem: recover second step}, we remove the loops in $B(\omega)$ using the mapping $\dltmap$.
To apply $\dltmap$, we first establish that the moments at which the loops in $B(\omega)$ are inserted are determined by $\ov{\omega}$.
Note that by Lemma \ref{lem: identify the late points Psi}, the quantity $|\R(\L_{\F}^{\ai}(\omega))|$ is determined by $\ov{\omega}$ .

\begin{lemma}
\label{lem: final prep for recover first step}
    For all $\overline{\omega}\in \ov{\Psi}(\CA)$ and $\omega \in \CA$ such that $\ov{\omega} \in \ov{\Psi}(\omega)$, the loop $\beta^{(j)}(\omega)$ and the moment $H_j(\omega)+S^{(j-1)}(\omega)$ are determined by $\ov{\omega}$ for all $1 \leq j \leq |\R(\L_{\F}^{\ai}(\omega))|$.
    In particular, $L(\beta^{(j)}(\omega))$ is determined by $\ov{\omega}$ for all $1 \leq j \leq |\R(\L_{\F}^{\ai}(\omega))|$.
\end{lemma}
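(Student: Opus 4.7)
The plan is to combine Lemmas~\ref{lem: identify the late points Psi} and \ref{lem: identify the loop with case Psi} (which already give us the unordered collection $B(\omega)$) with the ordering information encoded in the first-visit times of $\ov{\omega}$ to the roots $\x_j$, and then read off the insertion moments from the identity \eqref{eq: hitting moment}. The main (mild) obstacle is to confirm that the natural order in which $\ov{\omega}$ visits the points of $\R(\L_{\F}^{\ai}(\omega))$ after $T_2$ matches the internal indexing $\x_1,\ldots,\x_n$ used in \eqref{eq: inserting loops STEP 1}; once this is in hand, everything else is algebraic.

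First, by Lemma~\ref{lem: identify the late points Psi}, the set $\R(\L_{\F}^{\ai}(\omega))$ is determined by $\ov{\omega}$, and in particular so is $n=|\R(\L_{\F}^{\ai}(\omega))|$. For each $\x \in \R(\L_{\F}^{\ai}(\omega))$, the quantity $H_{\ai}(\ov{\omega},\x)$ is obviously determined by $\ov{\omega}$, so one can enumerate the elements of $\R(\L_{\F}^{\ai}(\omega))$ in increasing order of $H_{\ai}(\ov{\omega},\cdot)$; the second paragraph of the plan will verify that this enumeration coincides with the indexing $\x_1,\ldots,\x_n$ coming from \eqref{eq: inserting loops STEP 1}.

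For that matching, recall from the proof of Lemma~\ref{lem: inherit local information} that the loops in $B(\omega)$ are mutually disjoint (see \eqref{eq: loops mutually disjoint}), hence $\x_j \notin \beta^{(j')}$ for any $j'\neq j$. Consequently the walk $\omega^{(n)}$ visits $\x_j$ only inside the inserted segment corresponding to $\beta^{(j)}$, and by \eqref{eq: hitting moment} one has
\begin{equation*}
H_{\ai}(\ov{\omega},\x_j) \,=\, H_j(\omega)+S^{(j-1)}(\omega)+H_0(\beta^{(j)},\x_j).
\end{equation*}
Since $j\mapsto H_j(\omega)$ is strictly increasing by construction, $S^{(j-1)}(\omega)$ is nondecreasing in $j$, and $H_0(\beta^{(j)},\x_j)<L(\beta^{(j)})$, it follows that $j\mapsto H_{\ai}(\ov{\omega},\x_j)$ is also strictly increasing. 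Thus ordering the elements of $\R(\L_{\F}^{\ai}(\omega))$ by their first hitting time under $\ov{\omega}$ after time $\ai t_{\cov}$ produces exactly the sequence $\x_1,\ldots,\x_n$, so the indexing is determined by $\ov{\omega}$.

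Given the ordered sequence $\x_1,\ldots,\x_n$, Lemma~\ref{lem: identify the loop with case Psi} shows that each $\beta^{(j)}(\omega)=\beta_{\x_j}(\omega)$ is determined by $\ov{\omega}$; in particular $L(\beta^{(j)}(\omega))$ and $H_0(\beta^{(j)},\x_j)$ are determined by $\ov{\omega}$. Plugging back into the displayed identity,
\begin{equation*}
H_j(\omega)+S^{(j-1)}(\omega) \,=\, H_{\ai}(\ov{\omega},\x_j)-H_0(\beta^{(j)},\x_j),
\end{equation*}
which is a function of quantities already determined by $\ov{\omega}$. This proves the lemma, completing the ingredients needed to run the deletion procedure $\dltmap$ on the loops of $B(\omega)$ in the proof of Proposition~\ref{prop: recover first step}.
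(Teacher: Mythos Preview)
Your proof is correct and follows essentially the same approach as the paper's: both recover the ordering $\x_1,\ldots,\x_n$ from the increasing sequence $j\mapsto H_{\ai}(\ov{\omega},\x_j)$ (using the mutual disjointness \eqref{eq: loops mutually disjoint}), then invoke Lemma~\ref{lem: identify the loop with case Psi} to determine each $\beta^{(j)}(\omega)$. The only minor difference is in the final step: the paper reads off the insertion moment via the starting vertex of the loop, using $H_j(\omega)+S^{(j-1)}(\omega)=H_{\ai}(\ov{\omega},\near(\omega,\x_j))$, whereas you use the root $\x_j$ and subtract the within-loop hitting time, $H_j(\omega)+S^{(j-1)}(\omega)=H_{\ai}(\ov{\omega},\x_j)-H_0(\beta^{(j)},\x_j)$; both identities come from the same source \eqref{eq: hitting moment} and are equivalent once $\beta^{(j)}$ is known.
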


\begin{proof}
    By Lemma \ref{lem: identify the late points Psi}, the set $\R(\L_{\F}^{\ai}(\omega))$ is determined by $\ov{\omega}$.
    Recall that $n=|\R(\L_{\F}^{\ai}(\omega))|$. 
    By \eqref{eq: loops mutually disjoint}, for each vertex $\x\in\R(\L_{\F}^{\ai}(\omega))$, there exists exactly one loop $\beta_{\x}(\omega)$ in $B(\omega)$ that contains $\x$.
    By Lemma \ref{lem: identify the loops inserted Psi}, the loop $\beta_{\x}(\omega)$ is determined by $\ov{\omega}$ for each $\x\in\R(\L_{\F}^{\ai}(\omega))$.

    Next, we show that $\beta^{(j)}(\omega)$ is determined by $\ov{\omega}$ for all $1 \leq j \leq n$.
    Recall that each vertex in $\R(\L_{\F}^{\ai}(\omega))$ is assigned a label (possibly depending on $\omega$) so that $H_j(\omega)=H_{\ai}(\omega,\near(\omega,\x_j))$ is increasing with respect to $j$ as described below \eqref{eq: loops to be inserted in STEP1}.
    By \eqref{eq: loops mutually disjoint}, $H_{\ai}(\overline{\omega},\x_l)$ are increasing with respect to $j$.
    Therefore, the loop $\beta^{(j)}(\omega)$ is also determined by $\ov{\omega}$ for all $1 \leq j \leq n$.

    Finally, we demonstrate that $H_j(\omega)+S^{(j-1)}(\omega)$ are determined by $\ov{\omega}$ for all $1 \leq j \leq n$.
    Since the insertion of $\beta^{(j')}(\omega)$ occurs before the path hitting $\x_j$ when $1 \leq j' \leq j-1$ (see \eqref{eq: inserting loops STEP 1}), the part of the path following $\beta^{(j')}$ shifts forward by $L(\beta^{(j')})$ steps. 
    Using \eqref{eq: loops mutually disjoint} and the definition of $S^{(j)}(\omega)$, we have 
    \begin{equation}
    \label{eq: changing hitting time earlier}
        H_{\ai}(\omega^{(j)},\near(\omega,\x_j))=H_j(\omega) +S^{(j-1)}(\omega).
    \end{equation}
    Using an argument similar to that used to derive \eqref{eq: hitting moment},  we obtain 
    \begin{equation}
    \label{eq: changing hitting time later}
         H_{\ai}(\ov{\omega},\near(\omega,\x_j))=H_{\ai}(\omega^{(j')},\near(\omega,\x_j))
    \end{equation}
    for all $j' \geq j$. Combining \eqref{eq: changing hitting time earlier} and  \eqref{eq: changing hitting time later} gives that $H_j(\omega)+S^{(j-1)}(\omega)$ are determined by $\ov{\omega}$ for all $1 \leq j \leq n$.
    Since $\beta^{(j)}(\omega)$ is determined by $\ov{\omega}$ for all $1 \leq j \leq n$, the starting point $\near(\omega,\x_j)$ of $\beta^{(j)}(\omega)$ is also determined by $\ov{\omega}$.
    Hence, $H_j(\omega) +S^{(j-1)}(\omega)$ is determined by $\ov{\omega}$.
\end{proof}

We now derive Proposition \ref{prop: recover first step} from Lemmas \ref{lem: identify the late points Psi} and \ref{lem: final prep for recover first step}.

\begin{proof}[Proof of Proposition \ref{prop: recover first step}]
    The existence of the path $\omega \in \CA$ such that $\ov{\omega} \in \Psi(\omega)$ is trivial. 
    
    We now focus on the uniqueness of such a path.
    By Lemma \ref{lem: identify the late points Psi}, $\R(\L_{\F}^{\ai}(\omega))$ is determined by $\ov{\omega}$.
    Therefore, $n=|\R(\L_{\F}^{\ai}(\omega))|$ is also determined by $\ov{\omega}$.
    Let $\omega$ be an arbitrary path such that $\ov{\omega} \in \Psi(\omega)$. 
    Consider the following deletion process.
    Define $\wt{\omega}^{(n)}=\ov{\omega}$, and recursively set
    \begin{equation}
    \label{eq: deletion process STEP 1}
        \wt{\omega}^{(j)}=\dltmap(\wt{\omega}^{(j+1)}(\omega),H_j(\omega)+S^{(j-1)}(\omega),H_j(\omega)+S^{(j-1)}(\omega)+L(\beta^{(j)}(\omega))), \;\, \mbox{for $0 \leq j \leq n-1$.}
    \end{equation}
    By induction, we can show that the mapping $\dltmap$ can be applied as described at each step $0 \leq j \leq n-1$, and that $\wt{\omega}^{(j)}$ is an \ext of $\omega^{(j)}$ when $0 \leq j \leq n-1$.
    Therefore, by Lemmas \ref{lem: identify the late points Psi} and \ref{lem: final prep for recover first step}, the paths $\wt{\omega}^{(j)}$ ($0 \leq j \leq n$) are determined by $\ov{\omega}$.
    Since all paths in $\CA$ are of length $T_3$, we obtain that
    \begin{equation}
        \wt{\omega}^{(0)}=\omega\mid_{T_3}.
    \end{equation}
    Now the uniqueness follows from the fact that $\wt{\omega}^{(0)}$ is determined by $\ov{\omega}$.
\end{proof}

\section{Tables of Symbols}
\label{sec: tables of symbols}
In this section, we gather some symbols from Sections \ref{sec: proofs of lower bounds} and \ref{sec: Construction of the mapping Psi and its properties} for reference. These symbols are presented in the following six tables, which compile symbols for Points and Subsets of $\T_N$, notation related to the Auxiliary Graph, Paths and Loops involved in the loop insertion process, Path and Loop Collections, Functions of Paths, and Mappings between Paths, respectively.

\begin{table}[h!]
\centering
\begin{tabular}{c c c} 
 \hline
Symbols & First appearance & Description \\ [0.5ex] 
 \hline
  $\BQ_\delta$ & \eqref{eq:Qdeltaboxes} & The bulk of the torus; its complement is denoted by $\HH_\delta$ \\
 $\bs{\Delta}$ & The beginning of Section \ref{sec: Construction of loops to be inserted} & The difference vector between two points. Also note the type of $\bs{\Delta}$ below the definition. \\
 $\near(\omega,\x)$ & The beginning of Section \ref{sec: Construction of loops to be inserted} & The vertex in the path segment $\omega(T_2,T_3]$ closest to $\x$\\ [1ex]
 \hline
\end{tabular}
\caption{Points and Subsets of $\T_N$}
\label{table:Torus}
\end{table}

\begin{table}[h!]
\centering
\begin{tabular}{c c c} 
 \hline
Symbols & First appearance & Description \\ [0.5ex] 
 \hline
 $G(F)$ & Above \eqref{eq:lNdef} & The auxiliary graph of $\T_N$'s subset $F$. \\ 
 $\C(F)$ & Below \eqref{eq:lNdef} & The collection of the connected components of $G(F)$  \\
 $\R(F)$ & Below \eqref{eq:lNdef} & The set of (representative) vertices of connected components in $G(F)$  \\ [1ex] 
 \hline
\end{tabular}
\caption{Auxiliary Graph}
\label{table:AuxGraph}
\end{table}

\begin{table}[h!]
\centering
\begin{tabular}{c c c} 
 \hline
Symbols & First appearance & Description \\ [0.5ex] 
 \hline
 $\beta(\x,\y)$ & Section \ref{sec: Construction of the mapping Psi and its properties}, below the proof of Proposition \ref{prop: loop insertion ineq} & A self-disjoint loop that connects two points $\x$ and $\y$.\\ 
 $\beta_i(\x)$ & \eqref{eq: beta-i} & $\beta_i(\x)= (\x, \x+e_i, \x)$.\\
 $\beta_{\x}(\omega)$ & Definition \ref{def: loops to be inserted in STEP1} & Loops inserted in the first step, constructed from $\beta(\x,\y)$ and $\beta_i(\x)$.\\
 $\beta^{(j)}(\omega)$ & Below \eqref{eq: number of roots} & The $j$-th loop inserted in the first step.\\
 $\omega^{(j)}$ & \eqref{eq: inserting loops STEP 1} & The path obtained by inserting $j$ loops in the first step ($1 \leq j \leq n$).\\
 $\beta^{\mathrm{Tree}}(\mathsf{C},\x)$ & Definition \ref{def: loops to be inserted in STEP2} & A loop that starts at $\x$ visits every vertex in $\mathsf{C}$.\\
 $\beta^{(j),\text{Tree}}$ & Below \eqref{eq: loops to be inserted in STEP2} & The $j$-th loop inserted in the second step, constructed from $\beta^{\mathrm{Tree}}(\mathsf{C},\x)$.\\
 $\omega^{(n+j)}$ & \eqref{eq: inserting loops STEP 2} & The path obtained by inserting $j$ loops in the second step ($1 \leq j \leq n$).\\ [1ex]
 \hline
\end{tabular}
\caption{Paths and Loops}
\label{table:PL}
\end{table}

\begin{table}[h!]
\centering
\begin{tabular}{c c c} 
 \hline
Symbols & First appearance & Description \\ [0.5ex] 
 \hline
 $\Omega_{t}$, $\Omega_{x,t}$ & \eqref{eq: random walk paths} and below & The collection of all random walk paths up to time $t$ (resp. with starting point $\x$).\\ 
 $B(\omega)$  & Definition \ref{def: loops to be inserted in STEP1} & The collection of loops to be inserted in the first step.\\
 $B^{\Tree}(\omega)$ & \eqref{eq: loops to be inserted in STEP2} & The collection of loops to be inserted in the second step.  \\
 $\Psi(\omega)$ & \eqref{eq: def Psi} & The collection of ``good paths" constructed through loop insertion.\\
 $\overline{\Psi}(\omega)$ & Above Proposition \ref{prop: recover first step} & The collection of extensions of $\omega^{(n)}$.\\ [1ex] 
 \hline
\end{tabular}
\caption{Paths and Loops Collections}
\label{table:PLC}
\end{table}

\begin{table}[h!]
\centering
\begin{tabular}{c c c} 
 \hline
Symbols & First appearance & Description \\ [0.5ex] 
 \hline
 $L(\cdot)$ & Below \eqref{eq: random walk paths} & The length of a path.\\ 
 $H_{\lambda}(\omega,\z)$  & \eqref{eq: def: hitting time after lambda} & The first hitting time of path $\omega$ after time $\lambda t_{\cov}$.\\
 $\Dir(\omega,k)$ &Definition \ref{def: loops to be inserted in STEP1} & The direction of the $k$-th step of the path $\omega$.\\
 $S^{(j)}(\omega)$ & \eqref{eq: inserting loops STEP 1} & The total length of the first $j$ loops inserted in the first step. \\
 $S^{(j),\text{Tree}}(\omega)$ & \eqref{eq: inserting loops STEP 2} & The total length of the first $j$ loops inserted in the first step.\\ [1ex] 
 \hline
\end{tabular}
\caption{Functions of Paths}
\label{table:func-of-path}
\end{table}

\begin{table}[h!]
\centering
\begin{tabular}{c c c} 
 \hline
Symbols & First appearance & Description \\ [0.5ex] 
 \hline
 $\insmap(\omega,\beta,k)$ & Above \eqref{eq: inserting loops STEP 1}& The insertion map.\\ 
 $\dltmap(\omega,k_1,k_2)$  & Below the proof of Lemma \ref{lem: upper bound for path with loops} & The deletion map. \\
 \emph{extensions} of $\omega$ & Below the proof of Lemma \ref{lem: upper bound for path with loops} & Paths with the first $L(\omega)$ steps the same as $\omega$.\\ [1ex] 
 \hline
\end{tabular}
\caption{Mappings between Paths}
\label{table:maps}
\end{table}

\begin{appendix}
\section{Proof of (\ref{eq:rough upper bound})}
\label{apx: proof of rough upper bound}

In this appendix, we sketch the proof of the rough upper bound \eqref{eq:rough upper bound}. Here we follow an argument similar to that of the upper bound in \cite[Theorem 1.1]{2dLDP} instead of modifying \cite{Goodman2014}.

We start with using the soft local times method to decouple the traces left by random walk in disjoint cubes. 
Let $A_1,A_2,\ldots,A_{k_0}, A'_1,A'_2,\ldots,A'_{k_0}$ be subsets of $\T_N$ such that $A_j \subset A'_j$ and $A_j \cap \partial A'_j = \emptyset$ for all $j=1,2,\ldots, k_0$ and $A_i' \cap A_j' = \emptyset$ for all $i \neq j$.
We write $A=\cup_{j=1}^{k_0}A_j$ and $A'=\cup_{j=1}^{k_0}A'_j$.
We denote by $Z_i^{(j)}$  the $i$-th excursion of random walk $(X_k)_{k \geq 0}$ from $\partial A_j$ to $\partial A_j'$.
For two subsets $F \subset F' \subset \T_N$ such that $F \cap \partial F'=\emptyset$ and a probability measure $\nu(\cdot)$ supported on $\partial F$, let $\mathbf{P}^{(\nu)}$ be the law of the first excursion from $\partial F$ to $\partial F'$ of random walk $(X_k)_{k \geq 0}$ under the measure $\P_{\nu}(\cdot)$.

The same argument\footnote{Note that although \cite[Lemma 2.1]{2dLDP} only deals with the 2D torus, the same proof goes through without modification.} as \cite[Lemma 2.1]{2dLDP} gives the following coupling between excursions of a random walk on torus and i.i.d.\ excursions.

\begin{lemma} 
\label{lem: SFT decouple lemma}
    Assume that $\widetilde{e}_j(\cdot)$ ($j=1,2,\ldots,k_0$) is a probability measure supported on $\partial A_j$ satisfying that 
    \begin{equation}
    \label{eq: small fluctuation of hitting measure}
        1-\frac{v}{3} \leq \frac{\P_y(X_{H_{A}}=x|X_{H_{A}}\in A_j)}{\widetilde{e}_j(x)} \leq 1+\frac{v}{3}
    \end{equation}
    for all $x \in \partial A_j$ and $y \in \partial A'_j$.
    Then, there exists a probability measure $\mathbf{P}$ extending $\mathsf{\P}$ and a family of events $(G_{j}^{m_0})_{1 \leq j \leq k_0}$ independent with random walk $(X_k)_{k \geq 0}$ such that
    \begin{itemize}
        \item $(G_{j}^{m_0})_{1 \leq j \leq k_0}$ are independent from each other.
        \item $\P(G_{j}^{m_0}) \geq 1-C \exp(-cvm_0)$ for all $j=1,2,\ldots,k_0$.
        \item For all $j=1,2,\ldots, k_0$ and $m>m_0$, on the event $U_{j}^{m_0}$, we have
        \begin{equation}
            \begin{aligned}
                \{Z_1^{(j)},Z_2^{(j)},\ldots, Z_{(1-v)m}^{(j)} \} &\subset \{\widetilde{Z}_1^{(j)},\widetilde{Z}_2^{(j)},\ldots, \widetilde{Z}_{(1+3v)m}^{(j)} \},\mbox{ and}\\
                \{\widetilde{Z}_1^{(j)},\widetilde{Z}_2^{(j)},\ldots, \widetilde{Z}_{(1-v)m}^{(j)} \} &\subset 
                \{Z_1^{(j)},Z_2^{(j)},\ldots, Z_{(1+3v)m}^{(j)} \},
            \end{aligned}
        \end{equation}
        where $(\widetilde{Z}_{i}^{(j)})_{i \geq 0}$ is a sequence of i.i.d.\ excursions with law $\mathbf{P}^{(\widetilde{e}_j)}$.
        Moreover, $(\widetilde{Z}_{i}^{(j)})_{i \geq 0}$ and $(\widetilde{Z}_{i}^{(j')})_{i \geq 0}$ are independent if $j \neq j'$.
    \end{itemize}
\end{lemma}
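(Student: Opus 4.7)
The plan is to follow the soft local times (SLT) method of Popov and Teixeira \cite{PT15}, suitably generalized to accommodate the multiple disjoint target sets $A_1, \ldots, A_{k_0}$. The idea is to realize both the true excursions $Z_i^{(j)}$ and the i.i.d.\ excursions $\widetilde{Z}_i^{(j)}$ on a common probability space indexed by a shared family of Poisson point processes, in such a way that the two sequences are tightly ordered relative to each other.

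First, take independent Poisson point processes $\eta_1, \ldots, \eta_{k_0}$ with $\eta_j$ on $\partial A_j \times \mathbb{R}_+$ of intensity $\widetilde{e}_j \otimes \mathrm{Leb}$, and enumerate the points of $\eta_j$ in increasing second coordinate as $(\widetilde{\zeta}_i^{(j)}, t_i^{(j)})_{i \geq 1}$. Define $\widetilde{Z}_i^{(j)}$ as the excursion from $\partial A_j$ to $\partial A_j'$ with starting point $\widetilde{\zeta}_i^{(j)}$; by the product structure, this yields i.i.d.\ sequences with the required marginal $\mathbf{P}^{(\widetilde{e}_j)}$ and mutual independence across $j$.

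Next, I recover the actual excursions on the same space via SLT on $\eta := \bigcup_j \eta_j$. Set $G_0 \equiv 0$. Inductively, if the walk has just finished its $i$-th excursion at $y \in \partial A_\ell'$, decompose the entrance distribution on $\partial A$ from $y$ as $\mu_{i+1} = \sum_j p_{i+1}^{(j)} \nu_{i+1}^{(j)}$, where $p_{i+1}^{(j)} = \P_y(X_{H_A} \in A_j)$ and $\nu_{i+1}^{(j)} = \P_y(X_{H_A} = \cdot \mid X_{H_A} \in A_j)$. Hypothesis \eqref{eq: small fluctuation of hitting measure} yields the Radon-Nikodym density $f_{i+1}^{(j)} := d\nu_{i+1}^{(j)}/d\widetilde{e}_j \in [1-v/3,\, 1+v/3]$. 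Draw a fresh $\xi_{i+1} \sim \mathrm{Exp}(1)$, set
\[
G_{i+1}(x) := G_i(x) + \xi_{i+1}\, p_{i+1}^{(j(x))} f_{i+1}^{(j(x))}(x), \qquad x \in \partial A_{j(x)},
\]
and choose the next excursion's entry point as the (a.s.\ unique) $(x^*, t^*) \in \eta$ satisfying $G_i(x^*) < t^* \leq G_{i+1}(x^*)$. The SLT principle ensures that the marginal law of $x^*$ is $\mu_{i+1}$ and that the unused part of $\eta$ remains, conditionally, a Poisson process above the new level $G_{i+1}$.

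Finally, I turn this into the required trace inclusion via concentration. Let $T_m^{(j)}$ be the sum of those $\xi_k$ used at steps when an excursion into $A_j$ is initiated; after $m$ such visits, $T_m^{(j)}$ is a sum of $m$ i.i.d.\ $\mathrm{Exp}(1)$ variables, so $\mathbf{P}(|T_m^{(j)} - m| > v m) \leq C e^{-c v^2 m}$. On the complementary event, the SLT restricted to $\partial A_j$ at the end of $m$ actual excursions into $A_j$ is sandwiched within $[(1-v/3)(1-v)m,\, (1+v/3)(1+v)m] \cdot \widetilde{e}_j$, hence uses only points of $\eta_j$ with index between $\lfloor (1-v)m \rfloor$ and $\lceil (1+3v)m \rceil$ for $v \in (0,1)$ small. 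Taking $G_j^{m_0}$ to be the intersection of these concentration events over all $m \geq m_0$ and absorbing $v^2$ into $v$, I get $\P(G_j^{m_0}) \geq 1 - C e^{-c v m_0}$, and both set inclusions asserted by the lemma follow. The events $(G_j^{m_0})_j$ are independent because they depend only on the Poisson data $(\eta_j)_j$, which are independent by construction.

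The main technical obstacle is the bookkeeping of the two-sided distortion arising jointly from the Radon-Nikodym factor $1\pm v/3$ in \eqref{eq: small fluctuation of hitting measure} and from the concentration of $T_m^{(j)}$, so that the composite factor fits within $[1-v,\, 1+3v]$ exactly as stated. A secondary subtlety is verifying that the SLT machinery of \cite{PT15}, originally phrased for a single target, admits the clean extension above to a union of target sets equipped with their own reference measures $\widetilde{e}_j$; here the independent construction of the $\eta_j$'s is what drives the independence of the coupled i.i.d.\ sequences across $j$.
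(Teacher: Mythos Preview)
Your outline via soft local times is exactly what the paper intends: it does not prove the lemma but cites \cite[Lemma~2.1]{2dLDP}, noting in a footnote that the argument there carries over to $d\geq 3$ unchanged. The overall strategy you describe is correct and matches that reference.

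There is, however, a mismatch between your SLT setup and your concentration step. The update $G_{i+1}(x)-G_i(x)=\xi_{i+1}\,p_{i+1}^{(j(x))}f_{i+1}^{(j(x))}(x)$ is the \emph{joint} SLT, which advances the profile on every $\partial A_j$ at every step; in that construction the level on $\partial A_j$ after $m$ visits to $A_j$ is, up to the $1\pm v/3$ factor, $\sum_{k\leq N_m^{(j)}}\xi_k\, p_k^{(j)}$ summed over \emph{all} steps, not your $T_m^{(j)}=\sum_{k:J_k=j}\xi_k$. The clean fix is to run a separate SLT on each $\eta_j$: at every step first decide which $A_j$ to enter by an auxiliary coin with weights $(p^{(j)})$, and only then advance the SLT on the chosen $\eta_j$ alone. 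In that version your $T_m^{(j)}$ really is the relevant quantity, and the $\xi$'s attached to visits to $A_j$ form a predictably selected subsequence of an i.i.d.\ family, hence remain jointly i.i.d.\ Exp$(1)$ across $i$ and $j$. Relatedly, your last sentence misattributes the mutual independence of the $G_j^{m_0}$: they are functions of these exponential clocks, not of the Poisson data $(\eta_j)$.
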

The events $G_{j}^{m_0}$ are defined explicitly in \cite[(2.6)]{2dLDP}, where they denote by $U_j^{m_0}$, but here we only need their properties.

We now use Lemma \ref{lem: SFT decouple lemma} to prove \eqref{eq:rough upper bound}.
Let $0<\gamma<\gamma_4<\gamma_3<\gamma_2<\gamma_1<1$.
Set
\begin{equation}
    s_N=N^{\gamma_2},\quad s_N'=\frac{N}{\lfloor N^{(1- \gamma_1)} \rfloor},\mbox{ and }\quad k_N= \lfloor N^{(1- \gamma_1)} \rfloor^d.
\end{equation}
We tile the (continuous) torus $\mathbb{R}_N \overset{\text{def.}}{=} \mathbb{R}^d / N \Z^d$ with $k_N$ cubes of side-length $s_n'$, and choose $x_j$ as the site in $\T_N$ closest to the center of the $j$-th cube.
Let 
\begin{equation}
\label{eq: boxes considered}
    A_j=B(x_j, s_N), \quad  A'_j=B(x_j,bs_N') 
\end{equation}
for some small $b>0$ (to be fixed at \eqref{eq: concentration of total number excursions}), where $B(x,r)$ is the Euclidean ball with center $x$ and radius $r$.
For each $v \in (0,1)$, there exists $N_0>0$ such that for all $N>N_0$, the inequality \eqref{eq: small fluctuation of hitting measure} holds for $\widetilde{e}_j(\cdot)=\overline{e}_{A_j}(\cdot)$ for any $j=1,2,\ldots,k_N$; see \cite[Proposition 1.5]{Szn17} for a similar result.
We let $v>0$ sufficiently small such that $(1+4v)\gamma_3<\gamma_2$ and set
\begin{equation}
    m_1=\gamma_3 g(0) \log N^d \capacity(B(0, s_N));\quad m_2=(1+4v)\gamma_3 g(0) \log N^d \capacity(B(0, s_N)).
\end{equation}
We define the events
\begin{equation}
\label{eq: def: one box event}
    \Lambda_1^{(j)}=G_j^{m_1},\quad \Lambda_2^{(j)}=\big\{\mbox{there exists $\x \in A_j$ such that $\x \notin \widetilde{Z}_j$ for all $j=1,2,\ldots,m_2$}\big\}
\end{equation}
for each $j=1,2,\ldots,k_N$.
By coupling i.i.d.\ excursions $\widetilde{Z}^{(j)}_i$ with excursions of random interlacements, we can show $\Lambda_2^{(j)}$ occurs with high probability.
Combining this with Lemma \ref{lem: SFT decouple lemma} gives
\begin{equation}
\label{eq: cover failure whp}
    \mathbf{P}(\Lambda^{(j)}_1 \cap \Lambda^{(j)}_2) \to 1.
\end{equation}
Let $\Lambda_3=\{\sum_{j=1}^{k_N} 1\{\Lambda^{(j)}_1 \cap \Lambda^{(j)}_2\} \geq \widetilde{\lambda} k_N+1\}$, where $\widetilde{\lambda}\in (\frac{\gamma_4}{\gamma_3},1)$.
Note that $\Lambda^{(j)}_1 \cap \Lambda^{(j)}_2$ ($j=1,2,\ldots,k_N$) are independent, so by \eqref{eq: cover failure whp} and a standard large deviation bound, we obtain 
\begin{equation}
\label{eq: prob: abnormal coverage by excursions}
    \mathbf{P}\left(\Lambda_3\right) \geq 1-\exp\big(-cN^{d(1-\gamma_1)}\big).
\end{equation}
Let $\zeta_j$ denote the number of excursions of random walk $(X_k)_{k \geq 0}$ between $\partial A_j$ and $\partial A_j'$ up to $\gamma t_{\cov}$ and $\zeta=\sum_{j=1}^{k_N}\zeta_j$.
By Lemma \ref{lem: SFT decouple lemma} and \eqref{eq: def: one box event}, on the event $\Lambda_1^{(j)} \cap \Lambda_2^{(j)}$, the cube $A_j$ is not completely covered by the first $m_1$ excursions of random walk $(X_k)_{k \geq 0}$ between $\partial A_j$ and $\partial A_j'$.
Therefore, on the event $U_{\gamma,N} \cap \Lambda_3$, there exist at least $\widetilde{\lambda} k_N$ cubes among $A_1,A_2,\ldots,A_{k_N}$ such that $\zeta_j \geq m_1$, which implies:
\begin{equation}
\label{eq: lb for num of excursions}
    \mbox{On the event $U_{\gamma,N} \cap \Lambda_3$, we have } \zeta \geq \widetilde{\lambda}k_N m_1=\widetilde{\lambda}\gamma_3 k_N \cdot g(0) \log N^d \capacity(B(0, s_N)).
\end{equation}

Furthermore, by replacing \cite[Lemma 2.1]{Dembo2004} with \cite[Lemma 2.2]{3dBMconcentration} and imitating the proof of \cite[Lemma 3.4]{2dLDP}, by choosing $b$ in \eqref{eq: boxes considered} sufficiently small\footnote{We will use \cite[Lemma 2.2]{3dBMconcentration} with $R=2b$, which requires $b$ to be sufficiently small so that the term $\eta(R)$ in \cite[(2.3))]{3dBMconcentration} remains sufficiently small; see also \cite[Lemma 3.4]{3dBMconcentration} and its proof for reference.}, one arrives at 
\begin{equation}
\label{eq: concentration of total number excursions}
    \P\left(\Lambda^c\right) \leq \exp \big(-c k_N \cdot g(0) \log N^d \capacity(B(0, s_N)) \big)=\exp(-k_N\cdot N^{(d-2)\gamma_2+o(1)}),
\end{equation}
where 
$$
\Lambda=\Big\{\zeta \leq \gamma_4 k_N \cdot g(0) \log N^d \capacity(B(0, s_N))\Big\}.
$$
Recall that $\widetilde{\lambda}>\frac{\gamma_4}{\gamma_3}$ and $m_1=\gamma_3 g(0) \log N^d \capacity(B(0, s_N))$, so by \eqref{eq: lb for num of excursions}, one has $U_{\gamma,N} \subset \Lambda_3^c \cup \Lambda^c$.
Combining \eqref{eq: prob: abnormal coverage by excursions}, \eqref{eq: concentration of total number excursions} and a union bound gives \eqref{eq:rough upper bound}.

\section{Proof of the upper bound of (\ref{eq: RI large deviation})}
\label{apx: proof of RI upper bound}
We show here the upper bound of \eqref{eq: RI large deviation} for all $\gamma \in (\frac{2}{d},1)$ using a bootstrap argument. Let 
\begin{equation}
    \label{eq: phase transition point}
    \gamma_*=\inf\left\{\overline{\gamma}\in (0,1]: \lim_{N\to\infty}\sup_{\gamma\in [\overline{\gamma},1]}
    \frac{\log\bbP(\frM_N\leq\gamma u_N)}{N^{d(1-\gamma)}}\leq -1\right\}.
\end{equation}
It follows from \cite[Theorem 0.1]{Belcoverlevels} (by taking $A=Q(0,N)$ and $z=0$ therein) that 
\begin{equation*}
\lim_{N\to\infty}\log\bbP(\frM_N\leq u_N) = -1, 
\end{equation*}
which implies the finiteness of $\gamma_*$. 
It now suffices to prove $\gamma_*\leq \frac{2}{d}$. 

Pick $\gamma, a, \delta\in (0,1)$. 
We set\footnote{The notation defined here is similar to that in Section \ref{sec: proof of upper bound} except that we replace $\gamma$ with $a$.}
\begin{equation*}
    R_N=\lfloor N^a \rfloor, \quad s_N=\lceil(1+\delta)R_N \rceil+1, \text{ and} \quad K_N=Q\big(0,\tfrac{N}{1+\delta}\big)\cap(s_N \Z)^d. 
\end{equation*}
It is clear that for large $N$, $K_N$ is $(1+\delta)R_N$-well separated, and $Q(x,R_N)\subset Q(0,N)$ for all $x\in K_N$. 
Thus, we can apply Proposition \ref{prop: RW/RI to independent RI coupling} with $R=R_N$, $F=K_N$, $u=\gamma u_N$ and $\rho=\rho_N=(\log N)^{-2}$. More precisely, writing $v_N=\gamma u_N(1+\rho_N)$, by \eqref{eq: RI to independent RI}, we get
\begin{equation}
\label{eq: bootstrap step}
\begin{aligned}
    \bbP(\frM_N\leq\gamma u_N) &\,\leq\; \bbP(Q(x,R_N)\subset \I^{\gamma u_N},\,\forall\, x\in K_N) \\
    \overset{\eqref{eq: RI to independent RI}}&{\leq} 
    \widetilde{\bbP}\big(Q(x,R_N)\subset \I^{(x),v_N}, \,\forall\, x \in K_N\big) + CN^{3d}\exp\big(-c\gamma N^{a(d-2)}(\log N)^{-3}\big)
\end{aligned}
\end{equation}
(recall that $C$ and $c$ depend only on $\delta$ and $d$). 
By the independence of $\I^{(x),v_N}$ ($x \in K_N$) and the translation invariance of random interlacements, 
\begin{equation}
\label{eq: boxes covered by independent RI}
    \widetilde{\bbP}\big(Q(x,R_N)\subset \I^{(x),v_N}, \,\forall\,x \in K_N\big) =
    \bbP\big(Q(0,R_N)\subset \I^{v_N}\big)^{|K_N|}
    =\bbP\big(\frM_{R_N}\leq v_N\big)^{|K_N|}.
\end{equation}
A simple calculation gives $v_N=f(\gamma, a, N)\cdot u_{R_N}$, where 
\begin{equation*}
\label{eq: bootstrap parameter}
    f(\gamma, a, N)=\gamma (1+\rho_N)\frac{\log N}{\log R_N}\to \frac{\gamma}{a},\text{\quad as }N\to\infty.
\end{equation*}
Let $\gamma_*<\lambda_1<\lambda_2<1$. We now fix $a\in(0,1)$ and take $\gamma\in[a\lambda_1,a\lambda_2]$.
By the definition of $\gamma_*$, 
\begin{equation}
\label{eq: assumption}
    \lim_{N\to\infty}\sup_{\frac{\gamma}{a}\in[\lambda_1,\lambda_2]}\frac{\log\bbP\big(\frM_{R_N}\leq v_N\big)}{(R_N)^{d(1-f(\gamma,a,N))}}\leq -1. 
\end{equation}
Through a careful calculation, \eqref{eq: assumption} implies
\begin{equation}
\label{eq: prior estimate}
     \lim_{N\to\infty}\sup_{\gamma\in[a\lambda_1,a\lambda_2]}\frac{\log\bbP\big(\frM_{R_N}\leq v_N\big)}{N^{d(a-\gamma)}}\leq -1. 
\end{equation}
Moreover, by the definition of $K_N$, 
\begin{equation}
\label{eq: number of boxes}
    |K_N| \sim \bigg(\frac{N^{1-a}}{(1+\delta)^2}\bigg)^d, 
    \quad N\to\infty.
\end{equation}
We now assume that $d(1-a\lambda_1)<a(d-2)$. By combining \eqref{eq: bootstrap step}, \eqref{eq: boxes covered by independent RI}, \eqref{eq: prior estimate} and \eqref{eq: number of boxes}, and sending $\delta\to 0$, we obtain
\begin{equation}
\label{eq: bootstrap result}
    \lim_{N\to\infty}\sup_{\gamma\in [a\lambda_1, a\lambda_2]}
    \frac{\log\bbP(\frM_N\leq\gamma u_N)}{N^{d(1-\gamma)}}\leq -1. 
\end{equation}
We send $a,\lambda_2\to 1^-$ and $\lambda_1\to\gamma_*^+$ such that $a\lambda_1<\gamma_*<a\lambda_2$. 
However, \eqref{eq: phase transition point} and \eqref{eq: bootstrap result} directly imply $a\lambda_1\geq \gamma_*$ (when $a\lambda_2>\gamma_*$), which leads to a contradiction. 
Thus, we have $d(1-\gamma_*)\geq d-2$, i.e., $\gamma_*\leq \frac{2}{d}$. 
\end{appendix}

%
%

\begin{acks}[Acknowledgments]
We would like to thank the anonymous referee for valuable comments and suggestions, which greatly helped the revision of this work. 
\end{acks}
\begin{funding}
%
XL thanks the support of National Key R\&D Program of China (No.\ 2021YFA1002700 and No.\ 2020YFA0712900) and NSFC (No.\ 12071012).
\end{funding}


\bibliographystyle{imsart-number} 
\bibliography{reference}       

\begin{thebibliography}{32}

\bibitem{Abe2021}
\begin{barticle}[author]
\bauthor{\bsnm{Abe},~\bfnm{Yoshihiro}\binits{Y.}}
(\byear{2021}).
\btitle{Second-Order Term of Cover Time for Planar Simple Random Walk}.
\bjournal{J.\ Theoret.\ Probab.}
\bvolume{34}
\bpages{1689-1747}.
\bdoi{10.1007/s10959-020-01011-2}
\end{barticle}
\endbibitem

\bibitem{abe2021avoided}
\begin{bincollection}[author]
\bauthor{\bsnm{Abe},~\bfnm{Yoshihiro}\binits{Y.}}
(\byear{2021}).
\btitle{Avoided points of two-dimensional random walks}.
In \bbooktitle{Stochastic Analysis, Random Fields and Integrable Probability—Fukuoka 2019},
\bvolume{87}
\bpages{199--212}.
\bpublisher{Mathematical Society of Japan}.
\end{bincollection}
\endbibitem

\bibitem{Aldous2014}
\begin{bmisc}[author]
\bauthor{\bsnm{Aldous},~\bfnm{David}\binits{D.}} \AND \bauthor{\bsnm{Fill},~\bfnm{James~Allen}\binits{J.~A.}}
(\byear{2002}).
\btitle{Reversible Markov Chains and Random Walks on Graphs}.
\bnote{Unfinished monograph, recompiled 2014, available at \url{http://www.stat.berkeley.edu/~aldous/RWG/book.html}}.
\end{bmisc}
\endbibitem

\bibitem{belius2011}
\begin{barticle}[author]
\bauthor{\bsnm{Belius},~\bfnm{David}\binits{D.}}
(\byear{2011}).
\btitle{Cover times in the discrete cylinder}.
\bjournal{arXiv preprint arXiv:1103.2079}.
\end{barticle}
\endbibitem

\bibitem{Belcoverlevels}
\begin{barticle}[author]
\bauthor{\bsnm{Belius},~\bfnm{David}\binits{D.}}
(\byear{2012}).
\btitle{{Cover levels and random interlacements}}.
\bjournal{Ann.\ Appl.\ Probab.}
\bvolume{22}
\bpages{522--540}.
\bdoi{10.1214/11-AAP770}
\end{barticle}
\endbibitem

\bibitem{Bel13}
\begin{barticle}[author]
\bauthor{\bsnm{Belius},~\bfnm{David}\binits{D.}}
(\byear{2013}).
\btitle{Gumbel fluctuations for cover times in the discrete torus}.
\bjournal{Probab.\ Theory Related Fields}
\bvolume{157}
\bpages{635--689}.
\bdoi{10.1007/s00440-012-0467-7}
\bmrnumber{3129800}
\end{barticle}
\endbibitem

\bibitem{Belius2017}
\begin{barticle}[author]
\bauthor{\bsnm{Belius},~\bfnm{David}\binits{D.}} \AND \bauthor{\bsnm{Kistler},~\bfnm{Nicola}\binits{N.}}
(\byear{2017}).
\btitle{The subleading order of two dimensional cover times}.
\bjournal{Probab.\ Theory Related Fields}
\bvolume{167}
\bpages{461-552}.
\bdoi{10.1007/s00440-015-0689-6}
\end{barticle}
\endbibitem

\bibitem{belius2020tightness}
\begin{barticle}[author]
\bauthor{\bsnm{Belius},~\bfnm{David}\binits{D.}}, \bauthor{\bsnm{Rosen},~\bfnm{Jay}\binits{J.}} \AND \bauthor{\bsnm{Zeitouni},~\bfnm{Ofer}\binits{O.}}
(\byear{2020}).
\btitle{Tightness for the cover time of the two dimensional sphere}.
\bjournal{Probab.\ Theory Related Fields}
\bvolume{176}
\bpages{1357--1437}.
\end{barticle}
\endbibitem

\bibitem{benjamini2013linear}
\begin{barticle}[author]
\bauthor{\bsnm{Benjamini},~\bfnm{Itai}\binits{I.}}, \bauthor{\bsnm{Gurel-Gurevich},~\bfnm{Ori}\binits{O.}} \AND \bauthor{\bsnm{Morris},~\bfnm{Ben}\binits{B.}}
(\byear{2013}).
\btitle{Linear cover time is exponentially unlikely}.
\bjournal{Probab.\ Theory Related Fields}
\bvolume{155}
\bpages{451--461}.
\end{barticle}
\endbibitem

\bibitem{bolthausen2001entropic}
\begin{barticle}[author]
\bauthor{\bsnm{Bolthausen},~\bfnm{Erwin}\binits{E.}}, \bauthor{\bsnm{Deuschel},~\bfnm{Jean-Dominique}\binits{J.-D.}} \AND \bauthor{\bsnm{Giacomin},~\bfnm{Giambattista}\binits{G.}}
(\byear{2001}).
\btitle{Entropic repulsion and the maximum of the two-dimensional harmonic crystal}.
\bjournal{Ann.\ Probab.}
\bvolume{29}
\bpages{1670--1692}.
\end{barticle}
\endbibitem

\bibitem{BDZ95}
\begin{barticle}[author]
\bauthor{\bsnm{Bolthausen},~\bfnm{Erwin}\binits{E.}}, \bauthor{\bsnm{Deuschel},~\bfnm{Jean-Dominique}\binits{J.-D.}} \AND \bauthor{\bsnm{Zeitouni},~\bfnm{Ofer}\binits{O.}}
(\byear{1995}).
\btitle{Entropic repulsion of the lattice free field}.
\bjournal{Comm.\ Math.\ Phys.}
\bvolume{170}
\bpages{417--443}.
\bmrnumber{1334403}
\end{barticle}
\endbibitem

\bibitem{2dLDP}
\begin{barticle}[author]
\bauthor{\bsnm{Comets},~\bfnm{Francis}\binits{F.}}, \bauthor{\bsnm{Gallesco},~\bfnm{Christophe}\binits{C.}}, \bauthor{\bsnm{Popov},~\bfnm{Serguei}\binits{S.}} \AND \bauthor{\bsnm{Vachkovskaia},~\bfnm{Marina}\binits{M.}}
(\byear{2013}).
\btitle{{On large deviations for the cover time of two-dimensional torus}}.
\bjournal{Electron.\ J.\ Probab.}
\bvolume{18}
\bpages{1--18}.
\bdoi{10.1214/EJP.v18-2856}
\end{barticle}
\endbibitem

\bibitem{3dBMconcentration}
\begin{barticle}[author]
\bauthor{\bsnm{Dembo},~\bfnm{Amir}\binits{A.}}, \bauthor{\bsnm{Peres},~\bfnm{Yuval}\binits{Y.}} \AND \bauthor{\bsnm{Rosen},~\bfnm{Jay}\binits{J.}}
(\byear{2003}).
\btitle{{Brownian Motion on Compact Manifolds: Cover Time and Late Points}}.
\bjournal{Electron.\ J.\ Probab.}
\bvolume{8}
\bpages{1--14}.
\bdoi{10.1214/EJP.v8-139}
\end{barticle}
\endbibitem

\bibitem{Dembo2004}
\begin{barticle}[author]
\bauthor{\bsnm{Dembo},~\bfnm{Amir}\binits{A.}}, \bauthor{\bsnm{Peres},~\bfnm{Yuval}\binits{Y.}}, \bauthor{\bsnm{Rosen},~\bfnm{Jay}\binits{J.}} \AND \bauthor{\bsnm{Zeitouni},~\bfnm{Ofer}\binits{O.}}
(\byear{2004}).
\btitle{Cover times for {B}rownian motion and random walks in two dimensions}.
\bjournal{Ann.\ of Math.\ (2)}
\bvolume{160}
\bpages{433--464}.
\bdoi{10.4007/annals.2004.160.433}
\end{barticle}
\endbibitem

\bibitem{DRS14}
\begin{bbook}[author]
\bauthor{\bsnm{Drewitz},~\bfnm{Alexander}\binits{A.}}, \bauthor{\bsnm{R\'ath},~\bfnm{Bal\'azs}\binits{B.}} \AND \bauthor{\bsnm{Sapozhnikov},~\bfnm{Art\"em}\binits{A.}}
(\byear{2014}).
\btitle{An introduction to random interlacements}.
\bseries{SpringerBriefs in Mathematics}.
\bpublisher{Springer, Cham}.
\bdoi{10.1007/978-3-319-05852-8}
\bmrnumber{3308116}
\end{bbook}
\endbibitem

\bibitem{dubroff2021linearcovertimeexponentially}
\begin{barticle}[author]
\bauthor{\bsnm{Dubroff},~\bfnm{Quentin}\binits{Q.}} \AND \bauthor{\bsnm{Kahn},~\bfnm{Jeff}\binits{J.}}
(\byear{2025}).
\btitle{Linear cover time is exponentially unlikely}.
\bjournal{Ann. \ Probab.}
\bvolume{53}
\bpages{1--22}.
\end{barticle}
\endbibitem

\bibitem{erdos1960some}
\begin{barticle}[author]
\bauthor{\bsnm{Erd\H{o}s},~\bfnm{Paul}\binits{P.}} \AND \bauthor{\bsnm{Taylor},~\bfnm{S.~James}\binits{S.~J.}}
(\byear{1960}).
\btitle{Some problems concerning the structure of random walk paths}.
\bjournal{Acta Math.\ Acad.\ Sci.\ Hungar}
\bvolume{11}
\bpages{137--162}.
\end{barticle}
\endbibitem

\bibitem{Goodman2014}
\begin{barticle}[author]
\bauthor{\bsnm{Goodman},~\bfnm{Jesse}\binits{J.}} \AND \bauthor{\bparticle{den} \bsnm{Hollander},~\bfnm{Frank}\binits{F.}}
(\byear{2014}).
\btitle{Extremal geometry of a {B}rownian porous medium}.
\bjournal{Probab.\ Theory Related Fields}
\bvolume{160}
\bpages{127-174}.
\bdoi{10.1007/s00440-013-0525-9}
\end{barticle}
\endbibitem

\bibitem{lawler2012intersections}
\begin{bbook}[author]
\bauthor{\bsnm{Lawler},~\bfnm{Gregory~F.}\binits{G.~F.}}
(\byear{2012}).
\btitle{Intersections of Random Walks}.
\bseries{Modern Birkh{\"a}user Classics}.
\bpublisher{Springer New York}.
\end{bbook}
\endbibitem

\bibitem{LP17}
\begin{bbook}[author]
\bauthor{\bsnm{Levin},~\bfnm{David~A.}\binits{D.~A.}}, \bauthor{\bsnm{Peres},~\bfnm{Yuval}\binits{Y.}} \AND \bauthor{\bsnm{Wilmer},~\bfnm{Elizabeth~L.}\binits{E.~L.}}
(\byear{2017}).
\btitle{Markov chains and mixing times},
\bedition{Second} ed.
\bpublisher{American Mathematical Society, Providence, RI}
\bnote{With a chapter on ``Coupling from the past'' by James G.\ Propp and David B.\ Wilson}.
\bdoi{10.1090/mbk/107}
\bmrnumber{3726904}
\end{bbook}
\endbibitem

\bibitem{li2017lower}
\begin{barticle}[author]
\bauthor{\bsnm{Li},~\bfnm{Xinyi}\binits{X.}}
(\byear{2017}).
\btitle{A lower bound for disconnection by simple random walk}.
\bjournal{Ann.\ Probab.}
\bvolume{45}
\bpages{879--931}.
\end{barticle}
\endbibitem

\bibitem{LiShi2025}
\begin{barticle}[author]
\bauthor{\bsnm{Li},~\bfnm{Xinyi}\binits{X.}} \AND \bauthor{\bsnm{Shi},~\bfnm{Jialu}\binits{J.}}
\btitle{Large deviations of the cover level of random interlacements}.
\bjournal{In preparation}.
\end{barticle}
\endbibitem

\bibitem{LiSzn14}
\begin{barticle}[author]
\bauthor{\bsnm{Li},~\bfnm{Xinyi}\binits{X.}} \AND \bauthor{\bsnm{Sznitman},~\bfnm{Alain-Sol}\binits{A.-S.}}
(\byear{2014}).
\btitle{{A lower bound for disconnection by random interlacements}}.
\bjournal{Electron.\ J.\ Probab.}
\bvolume{19}
\bpages{1--26}.
\bdoi{10.1214/EJP.v19-3067}
\end{barticle}
\endbibitem

\bibitem{miller2017uniformity}
\begin{barticle}[author]
\bauthor{\bsnm{Miller},~\bfnm{Jason}\binits{J.}} \AND \bauthor{\bsnm{Sousi},~\bfnm{Perla}\binits{P.}}
(\byear{2017}).
\btitle{Uniformity of the late points of random walk on $\mathbb{Z}_n^d$ for $d\geq 3$}.
\bjournal{Probab.\ Theory Related Fields}
\bvolume{167}
\bpages{1001--1056}.
\end{barticle}
\endbibitem

\bibitem{PT15}
\begin{barticle}[author]
\bauthor{\bsnm{Popov},~\bfnm{Serguei}\binits{S.}} \AND \bauthor{\bsnm{Teixeira},~\bfnm{Augusto}\binits{A.}}
(\byear{2015}).
\btitle{Soft local times and decoupling of random interlacements}.
\bjournal{J.\ Eur.\ Math.\ Soc.}
\bvolume{17}
\bpages{2545--2593}.
\bdoi{10.4171/JEMS/565}
\bmrnumber{3420516}
\end{barticle}
\endbibitem

\bibitem{prevost2023phase}
\begin{barticle}[author]
\bauthor{\bsnm{Pr{\'e}vost},~\bfnm{Alexis}\binits{A.}}, \bauthor{\bsnm{Rodriguez},~\bfnm{Pierre-Fran{\c{c}}ois}\binits{P.-F.}} \AND \bauthor{\bsnm{Sousi},~\bfnm{Perla}\binits{P.}}
(\byear{2023}).
\btitle{Phase transition for the late points of random walk}.
\bjournal{arXiv preprint arXiv:2309.03192}.
\end{barticle}
\endbibitem

\bibitem{Szn10}
\begin{barticle}[author]
\bauthor{\bsnm{Sznitman},~\bfnm{Alain-Sol}\binits{A.-S.}}
(\byear{2010}).
\btitle{Vacant set of random interlacements and percolation}.
\bjournal{Ann.\ of Math.\ (2)}
\bvolume{171}
\bpages{2039--2087}.
\bdoi{10.4007/annals.2010.171.2039}
\bmrnumber{2680403}
\end{barticle}
\endbibitem

\bibitem{Szn17}
\begin{barticle}[author]
\bauthor{\bsnm{Sznitman},~\bfnm{Alain-Sol}\binits{A.-S.}}
(\byear{2017}).
\btitle{Disconnection, random walks, and random interlacements}.
\bjournal{Probab.\ Theory Related Fields}
\bvolume{167}
\bpages{1--44}.
\bdoi{10.1007/s00440-015-0676-y}
\bmrnumber{3602841}
\end{barticle}
\endbibitem

\bibitem{Teixeira2009}
\begin{barticle}[author]
\bauthor{\bsnm{Teixeira},~\bfnm{Augusto}\binits{A.}}
(\byear{2009}).
\btitle{{Interlacement percolation on transient weighted graphs}}.
\bjournal{Electron.\ J.\ Probab.}
\bvolume{14}
\bpages{1604--1627}.
\bdoi{10.1214/EJP.v14-670}
\end{barticle}
\endbibitem

\bibitem{TW11}
\begin{barticle}[author]
\bauthor{\bsnm{Teixeira},~\bfnm{Augusto}\binits{A.}} \AND \bauthor{\bsnm{Windisch},~\bfnm{David}\binits{D.}}
(\byear{2011}).
\btitle{On the fragmentation of a torus by random walk}.
\bjournal{Comm.\ Pure Appl.\ Math.}
\bvolume{64}
\bpages{1599--1646}.
\end{barticle}
\endbibitem

\bibitem{Windisch2008}
\begin{barticle}[author]
\bauthor{\bsnm{Windisch},~\bfnm{David}\binits{D.}}
(\byear{2008}).
\btitle{Random walk on a discrete torus and random interlacements}.
\bjournal{Electron.\ Commun.\ Probab.}
\bvolume{13}
\bpages{140-150}.
\bdoi{10.1214/ECP.v13-1359}
\end{barticle}
\endbibitem

\bibitem{CT16}
\begin{barticle}[author]
\bauthor{\bsnm{Čern{\'y}},~\bfnm{Jiř{\'i}}\binits{J.}} \AND \bauthor{\bsnm{Teixeira},~\bfnm{Augusto}\binits{A.}}
(\byear{2016}).
\btitle{{Random walks on torus and random interlacements: Macroscopic coupling and phase transition}}.
\bjournal{Ann.\ Appl.\ Probab.}
\bvolume{26}
\bpages{2883--2914}.
\bdoi{10.1214/15-AAP1165}
\end{barticle}
\endbibitem

\end{thebibliography}


\end{document}